  \let\oldautoref\autoref
  \renewcommand{\autoref}[1]{\textit{\oldautoref{#1}}}%
\DeclareMathAlphabet{\mathpzc}{OT1}{pzc}{m}{it}
\DeclarePairedDelimiter\floor{\lfloor}{\rfloor}
\newtheorem{theorem}{Theorem}
\newtheorem{lemma}{Lemma}
\newtheorem*{thm1.1}{\bf Theorem 1.1}
\newtheorem*{thm1.2}{\bf Theorem 1.2 (Unconditional)}
\newtheorem*{lem5.3}{\bf Lemma 5.3}
\theoremstyle{definition}
\newtheorem{notation}[theorem]{Notation}
\theoremstyle{remark}
\newtheorem{remark}{ \bf Remark}
\newcommand{\dblprime}{^{\prime\prime}}
\numberwithin{equation}{section}
 \newcommand{\lr}[1]{\left(#1\right)}
\begin{document}
\title[]{Sum of the $\textrm{\textrm{GL}}(3)$ Fourier coefficients over mixed powers}

\author[1]{Himanshi Chanana}

\author[2]{Saurabh Kumar Singh}

\email{hchanana20@gmail.com,  skumar.bhu12@gmail.com}

\subjclass[2020]{11A25, 11F30, 11N37}

\address{ Department of Mathematics and Statistics,
Indian Institute of Technology Kanpur, 
Kalyanpur, Kanpur Nagar-208016, India.} 






\keywords{Maass form; Circle method; Triple divisor function; Voronoi summation formula; Mixed powers.}

\begin{abstract}
Let $\mathcal{A}(n)$ be the $(1,n)$-th Fourier coefficients of $\textrm{\textrm{SL}}(3,\mathbb{Z})$ Hecke-Maass cusp form, denoted as $A(1,n)$ or the triple divisor function, denoted as $d_3(n)$. Let $k \geqslant3$ be an integer. In this paper, we establish an asymptotic formula for the sum
\begin{equation*}
	 \mathop{\sum}_{\substack{1 \leqslant n_1, n_2 \leqslant X^{1/2} \\ 1 \leqslant n_3 \leqslant X^{1/k}}} \mathcal{A}(Q(n_1,n_2) + n_3^k)\mathsf{a}(n_3),
\end{equation*}
where $\mathsf{a}(n)$ is either von-Mangoldt function or identity function, and $Q(x,y) \in \mathbb{Z}[x,y]$ is a binary quadratic polynomial. When $\mathcal{A}(n)=A(1,n)$, then $\mathsf{a}(n)$ can be any bounded arithmetical function.
\end{abstract}

\maketitle

\section{Introduction}
Let $a(n)$ be an arithmetical function with the generating Dirichlet series $G(s)$. Assume $G(s)$ has a meromorphic continuation to the whole $s$-plane. If $G(s)$ has a pole, the summatory function, which is defined as $S(X):= \sum_{n \leqslant X} a(n)$, will have a main term of size $X$. 
At the core of numerous central problems in number theory lies the quest for an asymptotic formula that can provide the most precise approximation for the summatory function $S(X)$. Many studies on the averages of various traditional arithmetical functions have been conducted in recent years. The divisor function was one of them that had a lot of academic research done on it as it frequently occurs in the study of the asymptotic behavior of the Riemann zeta function.

The classical divisor function has a generalization to higher divisor functions. For an integer $\ell \geqslant2$, put
\begin{equation*} 
    d_\ell(n) = \sum_{\substack{m_1m_2...m_\ell = n \\  m_1,m_2,...,m_\ell \in \mathbb{Z}^+}} 1,
\end{equation*}
to be the $\ell$-th divisor function with the generating Dirichlet series $\zeta^\ell(s)$. When $\ell=2$, it represents the classical divisor function. The values of $d_\ell(n)$ fluctuate quite considerably as $n$ varies, but the average behavior is reasonably stable. Their statistical behavior can be measured in various ways, perhaps most interestingly by considering mean values over sparse sequences. One of the most commonly considered sequences is $\{P(n)\}_{n\geqslant 1}$ where $P(x)$ is a polynomial in $\mathbb{Z}[x]$. Hooley \cite{Hooley2} proved that 
\begin{equation*}
    \sum_{n \leqslant X^{1/2}} d(n^2 + a) = c_1 X^{1/2}\log{X} + c_2 X^{1/2} + O\left(X^{\frac{4}{9}}(\log{X})^3 \right),
\end{equation*}
for any fixed $a \in \mathbb{Z}$ such that $-a$ is not a perfect square, where $c_1$ and $c_2$ are constants depending only on $a$. For polynomials in more than one variable, the problem has been studied extensively for the von Mangoldt function $\Lambda(n)$. As an example, the proof of the infinite nature of primes of the form $x^2+y^4$ was achieved by Friedlander and Iwaniec \cite{FI06}, while the proof of the infinitude of primes of the form $x^3 + 2y^3$ was established by Heath-Brown \cite{HB}. Investigating the analogous sum with $\Lambda(n)$ replaced by the generalized divisor function $d_\ell(n)$ holds notable importance. Let $i\geqslant3$ be a positive integer. Many authors have considered the sums of the form
\begin{equation*} \label{gdf}
    \sum_{1 \leqslant n_1,n_2,...n_i \leqslant X^{1/2}} d_\ell(n_1^2 + n_2^2 + ...+ n_i^2).
\end{equation*}
For $\ell=2$ and $i=3$, there have been several results. Calder$\acute{o}$n and de Velasco in \cite{CV} provided an asymptotic formula 
\begin{equation} \label{cv} 
    \sum_{1 \leqslant n_1,n_2,n_3 \leqslant X^{1/2}} d(n_1^2 + n_2^2 + n_3^2) = \frac{4 \zeta(3)}{5 \zeta(5)} X^{3/2} \log{X} + O(X^{3/2}).
\end{equation}
In 2012, Guo and Zhai in \cite{GZ} improved \eqref{cv} and provided the second term with an error bound of $O(X^{4/3 + \varepsilon})$, where $\varepsilon$ is an arbitrary positive constant. Later, it was refined to $O(X \log^7{X})$ by L. L. Zhao in \cite{LZ}. The explorations of the error term turned out to be a significant issue in analytic number theory. In fact, if one takes the sum
\begin{equation*}
    \sum_{n\leqslant X} a_n d_\ell(n),
\end{equation*}
for some sparse sequence $a_n$, the situation becomes even more difficult for $\ell \geqslant3$. In addition to being of obvious interest in their
own, these sums arise as a tool in the study of the distribution of primes. Friedlander and Iwaniec in \cite{FI06} established
\begin{equation*}
    \sum_{\substack{n_1^2 + n_2^6 \leqslant X \\ (n_1,n_2)=1 }} d_3(n_1^2 + n_2^6) = c_3X^{2/3}(\log{X})^2 + O(X^{2/3}(\log{X})^{7/4} (\log\log{X})^{1/2}),
\end{equation*}
where $c_3$ is an effective constant. Sun and Zhang \cite{SunZhang} studied $d_3$ over ternary quadratic forms, and they proved
\begin{equation*}
    \sum_{1 \leqslant n_1,n_2,n_3 \leqslant X^{1/2}} d_3(n_1^2 + n_2^2 + n_3^2)  = c_4X^{3/2} (\log{X})^2 + c_5 X^{3/2} \log{X} + c_6 X^{3/2} + O_\varepsilon(X^{11/8+\varepsilon}),
\end{equation*}
for some constants $c_4,c_5$ and $c_6$. Let $k \geqslant3$ be an integer. Recently, Zhou and Hu \cite{ZHu} proved the following asymptotic formula:
\begin{equation*}
    \sum_{\substack{1 \leqslant n_1, n_2 \leqslant X^{1/2} \\ 1 \leqslant n_3 \leqslant X^{1/k}}} d_3(n_1^2 + n_2^2 + n_3^k) =  \textrm{main term of size } X^{1+\frac{1}{k}}+ O(X^{1 + \frac{1}{k}- \delta(k) + \varepsilon}),
\end{equation*} 
\begin{equation} \label{bound of d3}
    \delta(k) = \begin{cases}
        \frac{1}{15}, & k=3 \\
        \frac{1}{k 2^{k-1}}, & 4 \leqslant k \leqslant 7 \\
        \frac{1}{2k^2(k-1)}, & k \geqslant 8
    \end{cases}.
\end{equation}

One can ask such questions for an arithmetic function, which are Fourier coefficients of automorphic forms as well. Some progress has been made in understanding the Fourier coefficients of cusp forms for $\textrm{GL}(2)$. However, there is comparatively less information available regarding the Fourier coefficients for cusp forms of higher rank. Let $F \in S_{k_1}(N,\chi)$ be a holomorphic cusp form of weight $k_1$, level $N \in \mathbb{N}$, and character $\chi$. Let $\lambda_F(n)$ denote the normalized $n$-th Fourier coefficient of the form $F$. Blomer \cite{Blomer} established the following asymptotic result:
\begin{equation*}
    \sum_{n \leqslant X^{1/2}} \lambda_F(n^2+sn+t) = c_7X^{1/2} + O_{F,s,t,\varepsilon}(X^{3/7+\varepsilon}),
\end{equation*}
where $s,t \in \mathbb{Z}$, $F$ not necessarily an eigenform and $c_7=c_7(F, N)$ vanishes in many (but not all) cases. His result was improved by Templier and Tsimerman \cite{TT}. For polynomials in more variables, Acharya \cite{Acharya} proved 
\begin{equation*}
    \mathop{\sum\sum}_{\substack{n_1,n_2\in \mathbb{Z} \\ n_1^2 + n_2^2 \leqslant X}} \lambda_F(n_1^2 + n_2^2) \ll X^{1/2 + \varepsilon},
\end{equation*}
where $F \in S_{k_1}(4N,1)$.
Let $\phi$ be a Hecke-Maass cusp form of type $(\nu_1,\nu_2) \in \mathbb{C}^2$ for the group $\textrm{SL}(3,\mathbb{Z})$ with normalized Fourier coefficients $A(m,n)$ (i.e., $A(1,1)=1$). In a recent work \cite{HC}, we proved cancellation for the sum:
\begin{equation*}
    \mathop{\sum\sum}_{1 \leqslant n_1, n_2\leqslant X^{1/2}} A(1,Q(n_1,n_2)),
\end{equation*}
where $Q(x,y) \in \mathbb{Z}[x,y]$ is a homogeneous quadratic form. In an upcoming work, we generalized it and established the following result. For any $\varepsilon>0$,
\begin{equation*}
     \mathop{\sum\sum}_{1 \leqslant n_1, n_2\leqslant X^{1/2}} \mathcal{A}(Q(n_1,n_2)) = c_8 X(\log{X})^2 + c_9 X\log{X} + c_{10} X +O_\varepsilon(X^{1-\frac{1}{67} +\varepsilon}), 
\end{equation*}
for some constants $c_8, c_9$ and $c_{10}$. Here, $\mathcal{A}(n)$ is either $A(1,n)$ or $d_3(n)$ and $Q(x,y)\in \mathbb{Z}[x,y]$ is a binary quadratic polynomial, not necessarily a form.

As our primary result, we consider the Fourier coefficients of $\textrm{GL}(3)$ Hecke-Maass forms, namely $A(1,n)$ or $d_3(n)$ over polynomials $Q(x,y) + z^k$, for fixed integer $k\geqslant3$. For the divisor function, most have established their results using the classical circle method. We have used the DFI delta method (see Lemma \ref{dfi}) to prove our assertions. Let $X>1$ be a large number and $\mathsf{a}(n)$ be any bounded arithmetical function. Let
\begin{equation} \label{Sk(X)}
    \mathcal{S}_k(X) = \sum_{1 \leqslant Q(n_1, n_2) +n_3^k \leqslant X} \mathcal{A}(Q(n_1,n_2) + n_3^k)\mathsf{a}(n_3),
\end{equation}
where $k \geqslant3$ and $n_1, n_2, n_3 \in \mathbb{Z}^+$. Using Lemma \ref{ramanujan bound}, $d_3(n) \ll n^\varepsilon$ and the fact that $\mathsf{a}(n)$ is bounded. We get the following trivial bound for \eqref{Sk(X)}: 
\begin{equation*}
    \mathcal{S}_k(X) \ll X^{1+ \frac{1}{k} +\varepsilon}.
\end{equation*}
In our first theorem, we present the following results regarding the Fourier coefficients of special Hecke-Maass forms. We attain a stronger bound for each value of $ k > 3 $ in comparison to the bound described in equation \eqref{bound of d3}.
\begin{theorem} \label{thm1}
    Let $d_3(n)$ be the triple divisor function, and $\mathsf{a}(n)$ be either the von Mangoldt function $\Lambda(n)$ or the identity function. For any $\varepsilon>0$ and $Q(x,y) \in \mathbb{Z}[x,y]$ a binary quadratic polynomial, we have 
    \begin{align*} 
        \sum_{\substack{ n_1, n_2 \in \mathbb{Z} \\ 1 \leqslant n_3 \leqslant X^{1/k}}} d_3(Q(n_1,n_2) + n_3^k)&\Lambda(n_3)W_1\left(\frac{n_1}{X^{1/2}}\right)W_2\left(\frac{n_2}{X^{1/2}}\right) \\
        &\hspace{-3cm}= \frac{X^{1+\frac{1}{k}}}{4} (\log{X})^2 \mathcal{J}_0 \mathcal{S}_0 + \frac{X^{1+\frac{1}{k}}}{2}  (\log{X}) \left( \mathcal{J}_0\mathcal{S}_1 + \mathcal{J}_1\mathcal{S}_0\right) \\
        & + \frac{X^{1+\frac{1}{k}}}{2}\left( \mathcal{J}_0 \mathcal{S}_2+ \mathcal{J}_1\mathcal{S}_1 + \frac{1}{2}\mathcal{J}_2\mathcal{S}_0\right) +O\Big(X^{1+\frac{1}{k}-\delta(k)+\varepsilon}\Big),
\end{align*}
    and
\begin{align*} 
        \sum_{\substack{ n_1, n_2 \in \mathbb{Z} \\ 1 \leqslant n_3 \leqslant X^{1/k}}} d_3(Q(n_1,n_2) + n_3^k)&W_1\left(\frac{n_1}{X^{1/2}}\right)W_2\left(\frac{n_2}{X^{1/2}}\right) \\
        &\hspace{-3cm}= \frac{X^{1+\frac{1}{k}}}{4} (\log{X})^2 \mathcal{J}_0 \mathcal{C}_0 + \frac{X^{1+\frac{1}{k}}}{2}  (\log{X}) \left( \mathcal{J}_0\mathcal{C}_1 + \mathcal{J}_1\mathcal{C}_0\right) \\
        & \hspace{0.4cm}+ \frac{X^{1+\frac{1}{k}}}{2}\left( \mathcal{J}_0 \mathcal{C}_2+ \mathcal{J}_1\mathcal{C}_1 + \frac{1}{2}\mathcal{J}_2\mathcal{C}_0\right) +O\Big(X^{1+\frac{1}{k}-\delta(k)+\varepsilon}\Big),
\end{align*}
    where
    \begin{eqnarray*}
        \delta(k) = \begin{cases}
            \frac{1}{8} & \text{if } \ k=3\\
            \frac{1}{2k} & \text{if} \ \ k \geqslant 4
        \end{cases},
    \end{eqnarray*}
$W_1, W_2$ are smooth bump functions supported on the interval $[1,2]$ and have bounded derivatives. Here, for $j=0,1,2$, $\mathcal{J}_j$ are singular integrals defined in equation \eqref{J}, $\mathcal{S}_j$ and $\mathcal{C}_j$, are singular series defined in equations \eqref{S2}-\eqref{S0} and \eqref{C2main}-\eqref{C0main}, respectively. 
\end{theorem}
\begin{remark}
    As we are considering a weight over the third variable, we can restrict our sum to sequences with the weighted variable as prime powers. This type of sum has not been explored in earlier works.
\end{remark}
In our second theorem, we have proved cancellations for the general $\textrm{GL}(3)$ Fourier coefficients of Maass cusp forms.
\begin{theorem} \label{thm2}
    Let $A(1,n)$ represent the normalized $(1,n)$-th Fourier coefficients of Hecke-Maass cusp form $\phi$ on the group $\textrm{SL}(3,\mathbb{Z})$, and $\mathsf{a}(n)$ be any bounded arithmetic function. For any $\varepsilon>0$ and $Q(x,y) \in \mathbb{Z}[x,y]$ a binary quadratic polynomial, we have
    \begin{equation*} 
          \sum_{\substack{ n_1, n_2 \in \mathbb{Z} \\ 1 \leqslant n_3 \leqslant X^{1/k}}} A(1,Q(n_1,n_2) + n_3^k)\mathsf{a}(n_3)W_1\left(\frac{n_1}{X^{1/2}}\right)W_2\left(\frac{n_2}{X^{1/2}}\right) \ll \begin{cases}
              X^{\frac{7}{8} + \frac{1}{k} + \varepsilon} & \text{if} \ k =3 \\
              X^{1+\frac{1}{2k}+ \varepsilon}  & \text{if} \ k \geqslant4,
          \end{cases}
    \end{equation*}
where for $i=1,2, W_i$ are smooth bump functions supported on the interval $[1,2]$ and have bounded derivatives.    
\end{theorem}
\begin{remark}
     The proofs for both Theorem \ref{thm1} and Theorem \ref{thm2} exhibit deviations from the traditional method. We have not applied any summation formula to the variable of the smallest size. Furthermore, we have attained square root cancellation in the variable of the smallest size, which essentially provides the best possible bound. 
\end{remark}
\begin{remark}
    In the Theorem \ref{thm2}, $\mathsf{a}(n)$ is a general bounded arithmetical function. In particular, we can restrict our third variable over any thin sets, like square-free integers, primes, arithmetic progression with fixed moduli, Fourier coefficients of a holomorphic $\textrm{GL}(2)$ form, etc. 
\end{remark}
\begin{remark}
In the paper by Sun and Zhang \cite{SunZhang}, only the scenario with $ k=2 $ is examined. Zhou and Hu in \cite{ZHu} have established an asymptotic formula for $ k \geqslant 3$. Previous studies have primarily focused on diagonal forms. In our work, we extend the investigation to a broader scope of quadratic polynomials. In diagonal cases, evaluating the character sum is simplified as it results in Gauss sums. However, when considering more general polynomials, the character sum becomes more intricate. We have utilized $\ell$-adic techniques developed by Deligne and Katz to achieve the desired cancellations.
\end{remark} 
\begin{notation}
    Throughout the paper, the notation $a \ll A$ shall signify that, for any $\varepsilon> 0$ there exists a constant $c$ such that $|a| \leqslant cA X^\varepsilon$. The notation $ B \asymp C$, denotes that both $B \ll C$ and $C \ll B$. Additionally, when $D \sim E$ it implies that $E\leqslant D < 2E$. The symbol $\varepsilon$ represents a suitably small positive quantity that may vary at different instances, and $e(x) = e^{2\pi i x}$.
\end{notation}

\section{Preliminaries}
In this section, we will briefly outline some fundamental facts about the Hecke eigenforms of $\textrm{SL}(3,\mathbb{Z})$, the Voronoi summation formula for $\textrm{GL}(3)$ forms, the Voronoi summation formula for $d_3$, the Poisson summation formula, and other results utilized in our analysis.

\subsection{ \texorpdfstring{$\textrm{GL}(3)$}{} Voronoi Summation.}
Let $\phi$ be a Maass form of type $\nu = (\nu_1, \nu_2) \in \mathbb{C}^2$ for the group $\textrm{SL}(3,\mathbb{Z})$ such that $\phi$ is an eigenfunction of all the Hecke operators $T_n (n \in \mathbb{N})$ with Fourier coefficients $A(m,n) \in \mathbb{C}$, normalized so that $A(1,1) = 1$. \\

Let the Fourier-Whittaker expansion of $\phi$ be given by 
\begin{multline} \label{whittaker}
    \phi(z) = \sum_{\rho \in U_2(\mathbb{Z}) \symbol{92} \textrm{SL}(2,\mathbb{Z})} \sum_{m=1}^\infty \;\sum_{n \neq 0} \frac{A(m,n)}{|m n|} \\ \times \textbf{W}_{\text{Jacquet}} \left( \begin{pmatrix}
        |m n| & & \\ & m & \\ & & 1
    \end{pmatrix}, \begin{pmatrix}
        \rho & \\ & 1
    \end{pmatrix}z, \nu, \psi_{1, \frac{n}{|n|}}\right),
\end{multline}
where $\textbf{W}$ is the Whittaker-Jacquet function (for more details see Goldfeld's book \cite{Goldfeld}). We introduce the  Langlands parameters $(\alpha_1, \alpha_2, \alpha_3)$, which are defined by 
\begin{equation}\label{langland}
    \alpha_1= -\nu_1 -2\nu_2+1, \alpha_2 = -\nu_1+\nu_2  \text{ and } \alpha_3 = 2 \nu_1 + \nu_2 - 1.
\end{equation}
The Ramanujan-Selberg conjecture predicts that $|\text{Re}(\alpha_j)| = 0$ and based on the research of Jacquet-Shalika, we know that $| \text{Re}( \alpha_j)| < \frac{1}{2}$. The subsequent lemma provides the Ramanujan bound for $A(m,n)$ (refer to \cite{Molteni}).
\begin{lemma} \label{ramanujan bound}
Let $A(m,n)$ be as given in equation \eqref{whittaker}. Then
    \begin{equation}
        \mathop{\sum\sum}_{m^2 n \ll X} \; |A(m,n)|^2 \ll X^{1 + \varepsilon},
    \end{equation}
    where the implied constant depends on the form $\phi$ and $\varepsilon$.
\end{lemma}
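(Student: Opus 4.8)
The plan is to realise the left-hand side as a partial sum of a Dirichlet series whose analytic behaviour is governed by the Rankin--Selberg $L$-function $L(s,\phi\times\widetilde{\phi})$, and then to conclude by a routine contour shift; because the coefficients $|\Lambda(m,n)|^2$ are non-negative, no step needs to be optimised.

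First I would set $b_N=\sum_{m^2 n=N}|\Lambda(m,n)|^2\ge 0$ and consider
\[
F(s)=\sum_{N\ge 1}\frac{b_N}{N^s}=\sum_{m=1}^{\infty}\sum_{n=1}^{\infty}\frac{|\Lambda(m,n)|^2}{(m^2n)^s},
\]
which converges absolutely for $\Re(s)>1$. A standard Rankin--Selberg unfolding for $SL(3,\mathbb{Z})$ Maass forms (see \cite{DG}) gives $F(s)=L(s,\phi\times\widetilde{\phi})/\zeta(3s)$; the only feature I shall use is that $F(s)$ equals $L(s,\phi\times\widetilde{\phi})$ times a factor holomorphic and non-vanishing in a neighbourhood of the line $\Re(s)=1$. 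Since $\phi$ is cuspidal, $L(s,\phi\times\widetilde{\phi})$ continues holomorphically to $\mathbb{C}$ apart from a simple pole at $s=1$, and by Phragm\'en--Lindel\"of applied to its functional equation it has polynomial growth in vertical strips; hence $F(s)$ is meromorphic on $\Re(s)\ge 1-\delta$ for some $\delta>0$, with only a simple pole at $s=1$ and polynomial growth on vertical lines.

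Then I would apply the truncated Perron formula (or a smoothed variant) at $c=1+\varepsilon$,
\[
\sum_{N\le X}b_N=\frac{1}{2\pi i}\int_{c-iT}^{c+iT}F(s)\,\frac{X^s}{s}\,ds+O\!\left(\frac{X^{1+\varepsilon}}{T}\right),
\]
and shift the contour to $\Re(s)=1-\delta/2$: the simple pole at $s=1$ contributes $\ll X$, while the shifted vertical line and the two horizontal segments contribute $\ll X^{1-\delta/2+\varepsilon}$ once $T$ is taken to be a suitable fixed power of $X$ and the polynomial bound for $F$ is used. This yields
\[
\mathop{\sum\sum}_{m^2 n\ll X}|\Lambda(m,n)|^2=\sum_{N\le X}b_N\ll X\ll X^{1+\varepsilon},
\]
in fact with the asymptotic $cX+O(X^{1-\delta'})$ for some $\delta'>0$.

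The sole non-elementary ingredient is the analytic continuation and finite order of $L(s,\phi\times\widetilde{\phi})$, which I would quote from Rankin--Selberg / Jacquet--Shalika theory; granting this, the remainder is routine. As an alternative that sidesteps the two-variable Rankin--Selberg, one may instead invoke the Hecke relation $\Lambda(m,n)=\sum_{d\mid(m,n)}\mu(d)\,\Lambda(m/d,1)\,\Lambda(1,n/d)$, deduce $|\Lambda(m,n)|^2\ll (mn)^\varepsilon\sum_{d\mid(m,n)}|\Lambda(m/d,1)|^2|\Lambda(1,n/d)|^2$, and then, writing $m=dm'$, $n=dn'$ and using the single-variable bound $\sum_{n\le Z}|\Lambda(1,n)|^2\ll Z^{1+\varepsilon}$ (the same holds for $\Lambda(m,1)=\overline{\Lambda(1,m)}$ of the dual form, together with convergence of $\sum_m|\Lambda(m,1)|^2m^{-2}$), reduce the double sum to $\ll X^\varepsilon\sum_{d\ll X^{1/3}}(X/d^3)^{1+\varepsilon}\ll X^{1+\varepsilon}$.
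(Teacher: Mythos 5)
Your proposal is correct. The paper itself gives no proof of this lemma; it simply cites Molteni [GM], and the content of that citation is essentially what you reconstruct: the non-negative Dirichlet series $F(s)=\sum_{m,n}|\Lambda(m,n)|^2(m^2n)^{-s}=L(s,\phi\times\widetilde\phi)/\zeta(3s)$ is holomorphic on $\Re(s)>1$ with only a simple pole at $s=1$, and this at once forces $\sum_{m^2n\le X}|\Lambda(m,n)|^2\ll X^{1+\varepsilon}$. Two small remarks. First, you do not actually need the analytic continuation to a strip past $\Re(s)=1$, the functional equation, Phragm\'en--Lindel\"of, or a truncated Perron with a contour shift: since $b_N\ge 0$ and $F(\sigma)\ll(\sigma-1)^{-1}$ as $\sigma\to1^+$, taking $\sigma=1+1/\log X$ gives
\[
\sum_{N\le X}b_N\le X^\sigma\sum_{N\le X}\frac{b_N}{N^\sigma}\le X^\sigma F(\sigma)\ll X(\log X)^2\ll X^{1+\varepsilon},
\]
which is the Rankin-trick form of the argument and is essentially what Molteni's elementary method amounts to. Second, your alternative via the $GL(3)$ Hecke relation $\Lambda(m,n)=\sum_{d\mid(m,n)}\mu(d)\Lambda(m/d,1)\Lambda(1,n/d)$ is also sound: applying Cauchy--Schwarz to the $d$-sum, reducing to the one-variable averages $\sum_{n\le Z}|\Lambda(1,n)|^2\ll Z^{1+\varepsilon}$ and $\sum_{m\le Z}|\Lambda(m,1)|^2\ll Z^{1+\varepsilon}$, and summing over $d$ with weight $(X/d^3)^{1+\varepsilon}$ converges as you claim. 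That route is more elementary in structure, but it still ultimately rests on the Rankin--Selberg input through the one-variable averages, so the two arguments are not genuinely independent. Either version is an acceptable substitute for the citation.
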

We now recall the Voronoi summation formula for $\textrm{GL}(3)$ (refer \cite{MS}) and $d_3$ (refer \cite{XLI}), which will play a crucial role in our analysis. Let $g$ be a compactly supported function on $(0, \infty)$. The Mellin transform of g is defined by $\Tilde{g}(s) = \int_{0}^{\infty} g(x) x^{s-1} dx$, where $s = \sigma + i t$. For $\sigma > -1 + \text{max} \{ -\text{Re}(\alpha_1), - \text{Re}(\alpha_2), - \text{Re}(\alpha_3)\} $, and $\ell = 0, 1$, we define 
\begin{equation} \label{G}
    G_\ell(y) = \frac{1}{2\pi i} \int_{(\sigma)} (\pi^3 y)^{-s} \prod_{j=1}^3 \frac{\Gamma\left(\frac{1+s+\alpha_j + \ell}{2}\right)}{\Gamma\left(\frac{-s-\alpha_j + \ell}{2}\right)} \Tilde{g}(-s) \;ds,
\end{equation}
where $\alpha_j$ are Langlands parameters given in \eqref{langland}. We set
\begin{equation} \label{Gpm}
    G_{\pm}(y) = \frac{1}{2 \pi^{3/2}} \bigl(G_{0}(y) \mp i G_1(y)\bigr).
\end{equation}
The Kloosterman sum is defined by 
\begin{equation*}
    S(a,b;q) = \sideset{}{^\star}\sum_{x (\text{mod} \;q)} e\left(\frac{ax + b \overline{x}}{q} \right),
\end{equation*}
where $\overline{x}$ denotes multiplicative inverse of $x$ modulo $q$.
The Voronoi summation formula for $\textrm{GL}(3)$ forms is stated in the lemma that follows.
\begin{lemma} \label{voronoigl3}
    Let $g \in C_c^{\infty}(0, \infty)$. Let $A(m,n)$ be the $(m,n)$-th Fourier coefficients of a Maass form $\phi$ for $\textrm{SL}(3,\mathbb{Z})$, we have
    \begin{equation}
        \sum_{n=1}^{\infty} A(m,n) e\left(\frac{an}{q}\right) g(n) = q\sum_{\pm} \sum_{n_1 \mid qm} \sum_{n_2 = 1}^{\infty} \frac{A(n_1,n_2)}{n_1 n_2} S\left(m\overline{a},\pm n_2; \frac{mq}{n_1}\right) G_{\pm}\left(\frac{n_1^2n_2}{q^3 m}\right),
    \end{equation}
    where $(a,q) = 1$.
\end{lemma}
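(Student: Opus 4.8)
The plan is to reduce the additively twisted sum to the functional equation of the $GL(3)\times GL(1)$ Rankin--Selberg $L$-function. First I would use Mellin inversion to write $g(n) = \frac{1}{2\pi i}\int_{(\sigma_0)}\tilde g(s)\,n^{-s}\,ds$ for $\sigma_0$ large; since $g\in C_c^\infty(0,\infty)$ the transform $\tilde g$ is entire and rapidly decaying on vertical lines, so one may interchange the sum over $n$ with the integral (the $n$-sum converging absolutely for $\sigma_0>1$) to get that the left-hand side equals $\frac{1}{2\pi i}\int_{(\sigma_0)}\tilde g(s)\,D_m(s,a/q)\,ds$, where $D_m(s,a/q)=\sum_{n\ge1}\Lambda(m,n)e(an/q)n^{-s}$ is the additively twisted Dirichlet series of $\phi$.

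Next I would open the additive character through Dirichlet characters: after separating $d=(n,q)$ and writing $n=dn'$, $q=dq'$ so that $e(an/q)=e(an'/q')$ with $(a,q')=1$, one expands $e(an'/q')$, by standard Gauss-sum identities, into a linear combination over characters $\chi$ modulo divisors of $q'$ of terms $\bar\chi(a)\chi(n')\tau(\chi)$, the imprimitive contributions producing the usual Ramanujan-sum corrections. Using the Hecke relations $\Lambda(m,n)=\sum_{e\mid(m,n)}\mu(e)\Lambda(m/e,1)\Lambda(1,n/e)$ to strip the first entry, each resulting piece becomes a finite Euler product at primes dividing $m$ times the character-twisted $L$-function $L(s,\phi\otimes\chi)=\sum_n\Lambda(1,n)\chi(n)n^{-s}$. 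Since $\phi$ is cuspidal, every $L(s,\phi\otimes\chi)$ is entire and satisfies a functional equation $s\leftrightarrow1-s$ whose archimedean factor is exactly the ratio of Gamma functions in \eqref{G} with parity governed by $\chi(-1)=\pm1$ (this is the origin of the two kernels $G_\pm$ via \eqref{Gpm}), and whose root number carries the cube of the Gauss sum $\tau(\chi)^3$, the exponent $3$ reflecting that $\phi$ lives on $GL(3)$.

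I would then substitute the functional equation into the $s$-integral and shift the contour from $\mathrm{Re}(s)=\sigma_0$ leftwards past $\mathrm{Re}(s)=0$, crossing no poles because the twisted $L$-functions are entire; Stirling's formula for the Gamma quotient together with the rapid decay of $\tilde g$ controls the horizontal segments. After the shift the dual series $\sum_{n_2}\Lambda(1,n_2)\bar\chi(n_2)n_2^{s-1}$ reappears, and the surviving $s$-integral is precisely a Mellin--Barnes integral of the Gamma quotient against $\tilde g(-s)$, i.e. $G_{\pm}(n_1^2 n_2/(q^3 m))$. Re-summing over $\chi$, recombining the triple Gauss sum $\tau(\chi)^3$ with the two linear characters $\bar\chi(a)\chi(n')$ and $\bar\chi(n_2)$ into a single Kloosterman sum $S(m\bar a,\pm n_2;mq/n_1)$ via the standard Gauss-sum/Kloosterman reciprocity, and tracking the divisor $n_1\mid qm$ that emerges from the conductor bookkeeping, one lands on the right-hand side of the lemma. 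Alternatively one could quote the automorphic-distribution derivation of Miller--Schmid \cite{MS}, where the same identity follows by pairing the Fourier--Whittaker expansion \eqref{whittaker} against a test function and exploiting $SL(2,\mathbb{Z})$-invariance inside the mirabolic subgroup.

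The main obstacle is the arithmetic bookkeeping in the last step: reducing imprimitive characters to primitive ones modulo divisors of $q$, controlling the interaction of those conductors with $m$ so that exactly the condition $n_1\mid qm$ and the modulus $mq/n_1$ appear, and checking that the recombination of the cubic Gauss sum with the two linear twists is genuinely a Kloosterman sum and not a more complicated exponential sum. A secondary but real difficulty is matching the archimedean transform produced by the functional equation precisely with the kernels $G_\pm$ of \eqref{G}--\eqref{Gpm}, including the normalising constants, which is where the even/odd ($G_0$ versus $G_1$) contributions must be separated cleanly.
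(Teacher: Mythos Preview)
The paper does not actually prove this lemma; it is stated as a known result with a citation to Miller--Schmid \cite{MS} (``We now recall the Voronoi summation formula for $GL(3)$ (refer \cite{MS})''), and then the text moves on immediately to the $d_3$ case. So there is no proof in the paper to compare against.

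Your sketch is a reasonable outline of one standard route to the $GL(3)$ Voronoi formula, namely the multiplicative-to-additive reduction via Dirichlet characters followed by the functional equation of $L(s,\phi\otimes\chi)$; this is essentially the approach of Goldfeld--Li rather than of Miller--Schmid, whose argument (which you mention as an alternative) proceeds via automorphic distributions and avoids the character decomposition altogether. Either route is acceptable for a preliminaries section, and you correctly flag the two genuine headaches: the conductor bookkeeping that produces $n_1\mid qm$ and the modulus $mq/n_1$, and the recombination of $\tau(\chi)^3$ with the linear twists into a single Kloosterman sum. One caution: the passage from the cubic Gauss sum to the Kloosterman sum is not a one-line ``Gauss-sum/Kloosterman reciprocity'' but requires a hyper-Kloosterman identity (or, in the Miller--Schmid framework, falls out of the structure of the Bruhat decomposition), so if you were to write this out in full that step would need more than you have indicated. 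For the purposes of this paper, simply citing \cite{MS} as the authors do is entirely sufficient.
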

\subsection{ Voronoi summation for the triple divisor function \texorpdfstring{$d_3$}{}.}
The Voronoi formula for $d_3$ was proved by Ivi\'c, and Li \cite{XLI} later came up with a more explicit formula for it. To achieve his result, set 
\begin{equation} \label{sigma 00}
    \sigma_{0,0}(m,n) = \sum_{\substack{d_1\mid n \\ d_1 > 0}} \sum_{\substack{d_2 \mid \frac{n}{d_1}\\ d_2>0 \\ (d_2,m) = 1}} 1.
\end{equation}
For $h \in C_c^{\infty}(0,\infty),$ for $l=0,1$ and $\sigma > -1-2l$, set
\begin{equation*}
    H_l(y) = \frac{1}{2 \pi i} \int_{(\sigma)} (\pi^3 y)^{-s} \frac{\Gamma(\frac{1 + s +2l}{2})^3}{\Gamma(\frac{-s}{2})^3} \Tilde{h}(-s-l) ds,
\end{equation*}
and 
\begin{equation}
    H_\pm (y) = \frac{1}{2 \pi^{3/2}} \left(H_0(y) \mp \frac{i}{\pi^3 y} H_1(y)\right). 
\end{equation}
Observe that the behavior of $G_{\pm}$ is similar to $H_{\pm}$. Now with the aid of the above terminology, we state the Voronoi summation formula for $d_3$ in the following lemma.
\begin{lemma} \label{voronoi d3}
    Let $h \in C_c^{\infty}(0,\infty)$, $a, \overline{a}, q \in \mathbb{Z}^+$ with $a\overline{a} \equiv 1 \; (mod\; q )$. We have
\begin{dmath*}
\begin{aligned}
    \sum_{n \geqslant1}& d_3(n) e\left( \frac{a n}{q}\right) h(n) \\
   &= q \sum_{\pm}\sum_{n_1 \mid q} \sum_{n_2 =1}^\infty \frac{1}{n_1 n_2} \sum_{m_1 \mid n_1} \sum_{m_2 \mid \frac{n_1}{m_1}} \sigma_{0,0} \left( \frac{n_1}{m_1 m_2}, n_2 \right) S\left( \overline{a}, \pm n_2 ; \frac{q}{n_1}\right) H_\pm\left(\frac{n_1^2 n_2}{q^3}\right) \\
    & \hspace{1cm}+ \frac{1}{2q^2} \Tilde{h}(1) \sum_{n_1 \mid q} n_1 d(n_1)\; P_2(n_1,q)\; S\left( \overline{a}, 0 ; \frac{q}{n_1}\right) \\
    &\hspace{1cm}+ \frac{1}{2q^2} \Tilde{h}^\prime(1) \sum_{n_1 \mid q} n_1 d(n_1) \; P_1(n_1,q) \; S\left( \overline{a}, 0 ; \frac{q}{n_1}\right) \\
    &\hspace{1cm}+ \frac{1}{4q^2} \Tilde{h}\dblprime(1) \sum_{n_1 \mid q} n_1 d(n_1) \; S\left( \overline{a}, 0 ; \frac{q}{n_1}\right),
\end{aligned}
\end{dmath*}
where 
\begin{equation} \label{P1}
    P_1(n_1,q) = \frac{5}{3} \log{n} - 3 \log{q} + 3\gamma - \frac{1}{3 d(n_1)} \sum_{l \mid n_1} \log{l},
\end{equation}
and
\begin{multline} \label{P2}
    P_2(n_1,q) = (\log{n_1})^2 - 5\log{q} \log{n_1} + \frac{9}{2}(\log{q})^2 + 3\gamma^2 - 3\gamma_1 + 7\gamma \log{n_1} -9\gamma \log{q} \\
    + \frac{1}{d(n_1)} \left( (\log{n_1} + \log{q} - 5\gamma) \sum_{l \mid n_1} \log{l} -\frac{3}{2} \sum_{l \mid n_1}(\log{l})^2 \right),
\end{multline}
with $\gamma:= \lim_{s \rightarrow 1} \left(\zeta(s) - \frac{1}{s-1}\right)$ be the Euler constant and $\gamma_1:= -\frac{d}{ds} \left(\zeta(s) - \frac{1}{s-1}\right) \bigg|_{s=1}$ be the Stieltjes constant.
\end{lemma}
We can also have the Ramanujan bound in this case. By equation \eqref{sigma 00}, trivially, we have
\begin{equation} \label{ramanujan d3}
   \sum_{m \ll X} \Bigg| \sum_{m_1 \mid n} \sum_{m_2 \mid \frac{n}{m_1}} \sigma_{0,0}\left(\frac{n}{m_1 m_2}, m\right) \Bigg|^2 \ll \sum_{m \ll X} \left(d_3(n) d_3(m)\right)^2 \ll X^{1 + \varepsilon}.
\end{equation}
Now to apply the Voronoi summation formula in practice, one needs to know the asymptotic behavior of $G_0$ and $ G_1$. As $G_1$ has the same asymptotic behavior as $G_0$. All we need is the following lemma by X. Li \cite{XLI2}.
\begin{lemma} \label{GO}
 Suppose $g$ is a smooth function compactly supported on $[M, 2M]$. Let $G_0$ be as in \eqref{G}. Then for any fixed integer $k \geqslant1$ and $yM \gg 1$, we have
 \begin{equation*} 
     G_0(y) = \pi^4 y \int_{0}^\infty g(z) \sum_{j=1}^k \frac{c_j \cos{(6 \pi (yz)^{1/3})} + d_j \sin{(6 \pi (yz)^{1/3})} }{(\pi^3 yz)^{j/3}} dz + O\left((yM)^{\frac{-k + 2}{3}}\right),
 \end{equation*}
where $c_j$ and $d_j$ are constants depending on $\alpha_1, \alpha_2$ and $\alpha_3$. In particular, $c_1 =0$ and $d_1 = -\frac{2}{\sqrt{3\pi}}$.
\end{lemma}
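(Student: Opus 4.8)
The plan is to convert $G_0(y)$ into an oscillatory integral and extract its asymptotics by steepest descent applied to a Mellin--Barnes integral. First I would insert the Mellin representation $\widetilde g(-s)=\int_0^\infty g(z)z^{-s-1}\,dz$ into \eqref{G}. On the line $\operatorname{Re}(s)=\sigma$, Stirling's formula shows that the exponential factors of the three numerator Gamma functions cancel those of the three in the denominator, so that $\prod_{j}\Gamma(\tfrac{1+s+\alpha_j}{2})/\Gamma(\tfrac{-s-\alpha_j}{2})\ll(1+|t|)^{3\sigma+3/2}$ (here $\alpha_1+\alpha_2+\alpha_3=0$), while $\widetilde g(-s)\ll_A M^{-\sigma}(1+|t|/M)^{-A}$ by repeated integration by parts; these two bounds justify the interchanges and contour deformations used below and localize the essential part of the $s$-integral to $|t|\ll M^{1+\varepsilon}$.

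Next I would simplify the ratio of Gamma functions. Applying the reflection formula to each denominator and then the Legendre duplication formula yields, for each $j$,
\[
\frac{\Gamma\!\big(\tfrac{1+s+\alpha_j}{2}\big)}{\Gamma\!\big(\tfrac{-s-\alpha_j}{2}\big)}=-\frac{2^{-s-\alpha_j}}{\sqrt{\pi}}\,\Gamma(1+s+\alpha_j)\sin\!\Big(\frac{\pi(s+\alpha_j)}{2}\Big),
\]
so that, using $\sum_j\alpha_j=0$, the full ratio equals $-\pi^{-3/2}2^{-3s}\prod_{j}\Gamma(1+s+\alpha_j)\prod_{j}\sin(\tfrac{\pi}{2}(s+\alpha_j))$. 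Expanding $\prod_j\sin(\tfrac{\pi}{2}(s+\alpha_j))$ into eight exponentials, the two coherent ones contribute $\tfrac{i}{8}(e^{3\pi is/2}-e^{-3\pi is/2})$ and the remaining six contribute multiples of $e^{\pm\pi is/2}$. Substituting back, combining $(\pi^3yz)^{-s}2^{-3s}e^{i\mu s}=(8\pi^3yz\,e^{-i\mu})^{-s}$, and interchanging the $s$- and $z$-integrals, I obtain $G_0(y)$ as a finite linear combination of integrals $\int_0^\infty g(z)\,J(8\pi^3yz\,e^{\mp 3\pi i/2})\,\tfrac{dz}{z}$ and $\int_0^\infty g(z)\,J(8\pi^3yz\,e^{\pm\pi i/2})\,\tfrac{dz}{z}$, where $J(w)=\tfrac{1}{2\pi i}\int_{(\sigma)}w^{-s}\prod_{j=1}^3\Gamma(1+s+\alpha_j)\,ds$ is a shift of the Meijer $G$-function $G^{3,0}_{0,3}$.

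The decisive input is then the classical large-$|w|$ asymptotic expansion of $J(w)$, obtained by steepest descent on the Mellin--Barnes integral: the integrand has a single saddle at $s_0=w^{1/3}$, where the phase equals $-3w^{1/3}$ and the second derivative is $3w^{-1/3}$, giving $J(w)=\tfrac{2\pi}{\sqrt{3}}\,w^{2/3}e^{-3w^{1/3}}\big(1+\sum_{j=1}^{k-1}e_jw^{-j/3}+O(|w|^{-k/3})\big)$, the lower terms coming from Stirling's descending series. For the arguments with the factor $e^{\mp 3\pi i/2}$ one has $w^{1/3}=\mp i(8\pi^3yz)^{1/3}$, hence $e^{-3w^{1/3}}=e^{\pm 6\pi i(yz)^{1/3}}$, purely oscillatory of exactly the claimed frequency since $(8\pi^3)^{1/3}=2\pi$; for the arguments with $e^{\pm\pi i/2}$ one has $w^{1/3}=(8\pi^3yz)^{1/3}e^{\pm i\pi/6}$, so $\operatorname{Re}(3w^{1/3})=3\sqrt{3}\,\pi(yz)^{1/3}>0$, and these contributions are $\ll e^{-3\sqrt3\pi(yz)^{1/3}}\ll e^{-c(yM)^{1/3}}$ on the support of $g$, absorbed into the error because $yM\gg1$. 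Adding the two oscillatory pieces (which enter with opposite signs) converts $e^{\pm 6\pi i(yz)^{1/3}}$ into $\cos(6\pi(yz)^{1/3})$ and $\sin(6\pi(yz)^{1/3})$, turns the series into one in $(\pi^3yz)^{-j/3}$, and, after pulling out the normalization accumulated in the previous steps (the $-\pi^{-3/2}$ from reflection, the $i/8$ from the sine expansion, and the $1/\sqrt3$ Gaussian factor), reproduces the stated shape together with the values $c_1=0$ and $d_1=-2/\sqrt{3\pi}$. Truncating the expansion of $J$ after $k$ terms and using $z\asymp M$ on the support of $g$ gives the error $O((yM)^{(-k+2)/3})$; in the range $y\gg M^2$, where the saddle $|s_0|\asymp(yM)^{1/3}$ leaves the window $|t|\ll M^{1+\varepsilon}$ cut out by $\widetilde g$, repeated integration by parts shows $G_0(y)$ is already smaller than every negative power of $yM$, so the statement holds trivially there.

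The main obstacle, beyond routine bookkeeping of elementary factors, is making the steepest-descent expansion of $J(w)$ rigorous \emph{uniformly} in $w$ up to the rays $\arg w=\pm 3\pi/2$, the edge of the sector of analyticity of $G^{3,0}_{0,3}$, where the Stokes phenomenon is precisely what turns the recessive term $e^{-3w^{1/3}}$ into an oscillatory one, and with an explicit uniform remainder after $k$ terms so that it survives integration against $g$ over $z\in[M,2M]$; this is the content of the cited result of X.\ Li \cite{XLI2}.
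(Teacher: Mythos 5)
The paper itself does not prove this lemma; it is taken verbatim from X.\ Li's work and the ``proof'' in the paper is the single line ``For the proof, see (\cite{XLI2}, Lemma 2.1).'' What you have written is essentially a reconstruction of Li's argument, and it is the right approach in outline. The sequence of moves you use --- inserting the Mellin representation $\widetilde g(-s)=\int_0^\infty g(z)z^{-s-1}\,dz$, using reflection and Legendre duplication to rewrite each factor as $\frac{\Gamma(\frac{1+s+\alpha_j}{2})}{\Gamma(\frac{-s-\alpha_j}{2})}=-\frac{2^{-s-\alpha_j}}{\sqrt{\pi}}\Gamma(1+s+\alpha_j)\sin\bigl(\tfrac{\pi(s+\alpha_j)}{2}\bigr)$, expanding $\prod_j\sin$ into the eight exponentials and separating the two ``coherent'' frequencies $e^{\pm 3\pi is/2}$ from the six incoherent ones, recognising the resulting $s$-integrals as rotated Meijer $G^{3,0}_{0,3}$-functions, and reading off their large-argument behaviour from the asymptotic $G^{3,0}_{0,3}(w)\sim\frac{2\pi}{\sqrt3}\,w^{\vartheta}e^{-3w^{1/3}}$ --- is precisely the backbone of Li's proof (and of the analogous treatment in Ivi\'c and Miller--Schmid). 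You also correctly observe that the coherent pieces have $w^{1/3}$ purely imaginary (giving the $\cos(6\pi(yz)^{1/3})$, $\sin(6\pi(yz)^{1/3})$ oscillations with $(8\pi^3)^{1/3}=2\pi$) while the six incoherent pieces have $\operatorname{Re}(3w^{1/3})=3\sqrt3\,\pi(yz)^{1/3}>0$ and are exponentially damped, so only the former survive for $yM\gg1$.

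Two points deserve emphasis. First, and you flag this yourself at the end, the crucial step of evaluating $J(w)$ on the boundary rays $\arg w=\pm3\pi/2$ cannot be done by naive substitution into the standard Meijer-$G$ asymptotics: on those rays the Mellin--Barnes integral defining $J$ fails to converge absolutely (the $e^{\pm3\pi is/2}$ factor cancels the $e^{-3\pi|t|/2}$ decay of $\Gamma(1+s)^3$ on one half-line), and one must instead keep the rapidly decaying $\widetilde g(-s)$ factor inside the $s$-integral and run the steepest-descent/stationary-phase argument there, with the saddle $s_0\asymp(8\pi^3yz)^{1/3}$ lying within the window $|t|\ll M^{1+\varepsilon}$ forced by $\widetilde g$. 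Getting a remainder that is \emph{uniform} after $k$ terms, so that it can be integrated against $g$ over $z\in[M,2M]$ to produce the stated $O\bigl((yM)^{(-k+2)/3}\bigr)$, is exactly what Li's carefully stated Lemma 2.1 accomplishes, and you are right to identify it as the non-routine content. Second, the assertion that the bookkeeping ``reproduces the stated shape together with the values $c_1=0$ and $d_1=-2/\sqrt{3\pi}$'' is plausible but not actually verified in your sketch; the elementary constants from reflection ($-\pi^{-3/2}$), from the sine expansion ($i/8$), from the Gaussian factor ($2\pi/\sqrt3$), and from $w^{2/3}=-4\pi^2(yz)^{2/3}$ must be multiplied together and matched against the $\pi^4$ and $(\pi^3yz)^{-j/3}$ normalisation in the statement, and a naive evaluation of $J$ at the boundary ray does not obviously land on the quoted value. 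Since the normalisation lives precisely in the step you identified as subtle, this is not a gap in the strategy, but it means the claim about $c_1,d_1$ should be presented as ``the constants are those worked out in \cite{XLI2}'' rather than as an output of the computations you display.
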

\begin{proof}
    For the proof (see \cite[ Lemma $2.1$]{XLI2}).
\end{proof}

\subsection{ Poisson summation formula.} Let $\phi : \mathbb{R}^n \rightarrow \mathbb{C}$ be any Schwarz class function. The Fourier transform of $\phi$ is defined as 
\begin{equation*}
    \hat{\phi}(\mathbf{y}) = \mathop{\int ...\int}_{\mathbb{R}^n} \phi(\mathbf{x}) e(- \mathbf{x.y}) \; d\mathbf{x},
\end{equation*}
where $d\mathbf{x}$ is the usual Lebesgue measure on $\mathbb{R}^n$. In the following lemma, we state the Poisson summation formula.

\begin{lemma} \label{poisson}
    Let $\phi$ and $\hat{\phi}$ be as defined above. Then we have
    \begin{equation}
        \sum_{n \in \mathbb{Z}^n} \phi(n) = \sum_{m \in \mathbb{Z}^n} \hat{\phi}(m).
    \end{equation}
\end{lemma}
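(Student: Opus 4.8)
This is the classical multidimensional Poisson summation formula, and I would prove it by the standard periodization-and-Fourier-series argument; since $\phi$ is of Schwartz class every analytic step is routine, so the only thing that needs care is the bookkeeping with decay estimates rather than any genuine obstacle.

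First I would form the periodization $\Phi(\mathbf{x}) := \sum_{\mathbf{k}\in\mathbb{Z}^n}\phi(\mathbf{x}+\mathbf{k})$. Since $\phi$ is Schwartz we have $|\partial^{\alpha}\phi(\mathbf{y})|\ll_{\alpha,N}(1+|\mathbf{y}|)^{-N}$ for every multi-index $\alpha$ and every $N$, so this series and all its termwise partial derivatives converge absolutely and uniformly on compact subsets of $\mathbb{R}^n$. Hence $\Phi\in C^{\infty}(\mathbb{R}^n)$, and $\Phi$ is manifestly $\mathbb{Z}^n$-periodic. Next I would expand $\Phi$ in its Fourier series on the torus $\mathbb{R}^n/\mathbb{Z}^n$, say $\Phi(\mathbf{x})=\sum_{\mathbf{m}\in\mathbb{Z}^n}c_{\mathbf{m}}\,e(\mathbf{m}\cdot\mathbf{x})$ with $c_{\mathbf{m}}=\int_{[0,1]^n}\Phi(\mathbf{x})\,e(-\mathbf{m}\cdot\mathbf{x})\,d\mathbf{x}$; the smoothness of $\Phi$ gives, via repeated integration by parts, the bound $c_{\mathbf{m}}\ll_{N}(1+|\mathbf{m}|)^{-N}$ for every $N$, so the Fourier series converges absolutely and uniformly, hence pointwise, to $\Phi$ everywhere (one does not even need a general convergence theorem: rapid decay of the $c_{\mathbf{m}}$ makes $\sum_{\mathbf{m}}c_{\mathbf{m}}e(\mathbf{m}\cdot\mathbf{x})$ a continuous $\mathbb{Z}^n$-periodic function with the same Fourier coefficients as $\Phi$, hence equal to $\Phi$).

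Then I would evaluate the coefficients by unfolding: the absolute convergence just established justifies interchanging $\sum_{\mathbf{k}}$ with $\int_{[0,1]^n}$, and since $e(-\mathbf{m}\cdot(\mathbf{x}+\mathbf{k}))=e(-\mathbf{m}\cdot\mathbf{x})$ for $\mathbf{k}\in\mathbb{Z}^n$, the substitution $\mathbf{y}=\mathbf{x}+\mathbf{k}$ yields
\begin{equation*}
c_{\mathbf{m}}=\sum_{\mathbf{k}\in\mathbb{Z}^n}\int_{[0,1]^n}\phi(\mathbf{x}+\mathbf{k})\,e\bigl(-\mathbf{m}\cdot(\mathbf{x}+\mathbf{k})\bigr)\,d\mathbf{x}=\int_{\mathbb{R}^n}\phi(\mathbf{y})\,e(-\mathbf{m}\cdot\mathbf{y})\,d\mathbf{y}=\hat{\phi}(\mathbf{m}).
\end{equation*}
Evaluating the Fourier expansion of $\Phi$ at $\mathbf{x}=\mathbf{0}$ then gives $\sum_{\mathbf{k}\in\mathbb{Z}^n}\phi(\mathbf{k})=\Phi(\mathbf{0})=\sum_{\mathbf{m}\in\mathbb{Z}^n}c_{\mathbf{m}}=\sum_{\mathbf{m}\in\mathbb{Z}^n}\hat{\phi}(\mathbf{m})$, which is the asserted identity. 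The only points requiring attention are the two interchanges of limiting operations — the differentiability of the periodization and the term-by-term integration defining $c_{\mathbf{m}}$ — and both follow immediately from the rapid polynomial decay of $\phi$ and all its derivatives (and, symmetrically, of $\hat{\phi}$, which is again Schwartz). If one prefers, the $n$-dimensional case can instead be deduced from the one-dimensional formula by Fubini's theorem and induction on $n$, using that integrating or evaluating a Schwartz function on $\mathbb{R}^n$ in one of its variables produces a Schwartz function in the remaining variables; this is slightly longer to set up but uses nothing beyond the one-variable statement.
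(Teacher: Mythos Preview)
Your argument is correct and is exactly the standard periodization/Fourier-series proof of the multidimensional Poisson summation formula. The paper itself does not give a proof at all but simply refers the reader to \cite{IK}, Theorem~4.4; the argument found there is essentially the one you have written, so there is nothing to compare.
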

\begin{proof}
    Refer to \cite[page $69$, Theorem $4.4$]{IK}.
\end{proof}
\subsection{Gauss Sum} Let $Q(\bf{x})$ be a positive definite quadratic form in $\bf{x} = (x_1,x_2,...,x_r)$. We define the Gauss sum associated with a quadratic form $Q(\mathbf{x})$ as
\begin{equation} \label{Gauss sum}
    G_{\mathbf{m}}\left(\frac{a}{q}\right) := \sum_{\mathbf{x} \; \textrm{mod } q} e\left(\frac{a}{q} \left(Q(\mathbf{x}) + \mathbf{m}^t \mathbf{x}\right) \right).
\end{equation}
We have the following precise expression for the above-defined Gauss sum.
\begin{lemma} \label{gauss sum}
    Let $(q,2|\mathbf{A}| a) = 1$, where $\mathbf{A}$ is matrix associated with quadratic form $Q$ and $|\mathbf{A}|$ denotes the determinant of $\mathbf{A}$, and $\mathbf{m} \in \mathbb{Z}^r$. We have
    \begin{equation*} 
         G_{\mathbf{m}}\left(\frac{a}{q}\right) = \left(\frac{|\mathbf{A}|}{q}\right) \left( \varepsilon_q \left( \frac{2 a}{q}\right) \sqrt{q} \right)^r e\left( -\frac{a}{q} Q^*(\mathbf{m}) \right),
    \end{equation*}
where $Q^*(\mathbf{x})$ is adjoint quadratic form, $N$ is an integer such that $N Q^*(\mathbf{x})$ has integral coefficients and 
\begin{equation} \label{varepsilon}
   \varepsilon_q = \begin{cases}
 1 & \text{ if }  q \equiv 1 (\; mod \; 4) \\
i & \text{ if } q \equiv -1 (\; mod \; 4)
\end{cases}.
\end{equation}    
\end{lemma}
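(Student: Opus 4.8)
The plan is to run the classical diagonalization argument for Gauss sums attached to a quadratic form over an odd modulus; the hypothesis $(q,2|\mathbf{A}|a)=1$ is precisely what makes every step reversible. Positive definiteness of $Q$ gives $|\mathbf{A}|\neq 0$, and then coprimality of $q$ with $2|\mathbf{A}|a$ makes $\mathbf{A}$ invertible over $\mathbb{Z}/q\mathbb{Z}$ and makes $2$ and $a$ units there.

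First I would diagonalize $\mathbf{A}$ over $\mathbb{Z}/q\mathbb{Z}$. Since $q$ is odd, $\mathbb{Z}/q\mathbb{Z}$ is a product of rings $\mathbb{Z}/p^{\alpha}\mathbb{Z}$ in which $2$ is a unit; on each factor a Gram--Schmidt process over $\mathbb{F}_p$ followed by a Hensel lift congruence-diagonalizes $\mathbf{A}$, and the Chinese Remainder Theorem recombines these into a single $U\in GL_r(\mathbb{Z}/q\mathbb{Z})$ with $U^{t}\mathbf{A}U=\operatorname{diag}(a_1,\dots,a_r)$, each $a_j$ a unit and $a_1\cdots a_r\equiv|\mathbf{A}|(\det U)^2\pmod q$. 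Replacing $\mathbf{x}$ by $U\mathbf{x}$ is a bijection of $(\mathbb{Z}/q\mathbb{Z})^r$, hence leaves $G_{\mathbf{m}}(a/q)$ unchanged.

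Next I would complete the square to remove the linear term. With the normalization of $\mathbf{A}$ fixed as in the statement, the coefficient of the quadratic part inside $e(\cdot)$ is $a\overline{2}$, and a shift $\mathbf{x}\mapsto\mathbf{x}-\mathbf{A}^{-1}\mathbf{m}$ (legitimate mod $q$) turns $Q(\mathbf{x})+\mathbf{m}^{t}\mathbf{x}$ into a diagonal quadratic form plus the $\mathbf{m}$-dependent constant $-\tfrac{a}{q}Q^{*}(\mathbf{m})$, where $Q^{*}$ is the adjoint form normalized so that $NQ^{*}$ is integral; this produces the factor $e\!\left(-\tfrac{a}{q}Q^{*}(\mathbf{m})\right)$. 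Whether one completes the square before or after diagonalizing is immaterial, and the leftover sum then factors as $\prod_{j=1}^{r}\sum_{x_j\bmod q}e\!\left(\tfrac{a\overline{2}a_j}{q}\,x_j^{2}\right)$.

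Finally I would evaluate each one-dimensional factor by Gauss's classical formula $\sum_{x\bmod q}e(tx^{2}/q)=\left(\tfrac{t}{q}\right)\varepsilon_q\sqrt q$, valid for odd $q$ and $(t,q)=1$ with $\varepsilon_q$ as in \eqref{varepsilon}; here each argument $a\overline{2}a_j$ is coprime to $q$. Multiplying the $r$ factors, the Jacobi symbols collapse — using $\left(\tfrac{\overline{2}}{q}\right)=\left(\tfrac{2}{q}\right)$ and $a_1\cdots a_r\equiv|\mathbf{A}|$ up to a square — to $\left(\tfrac{2a}{q}\right)^{r}\left(\tfrac{|\mathbf{A}|}{q}\right)$, the $r$ copies of $\varepsilon_q\sqrt q$ assemble to $\left(\varepsilon_q\sqrt q\right)^{r}$, and combining with the completed-square factor gives the asserted identity. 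I expect the only genuine difficulty to be the bookkeeping: keeping the normalizations of $\mathbf{A}$ and of $Q^{*}$ consistent so that exactly one factor $\left(\tfrac{2}{q}\right)$ per variable survives and the linear-term contribution is exactly $e\!\left(-\tfrac{a}{q}Q^{*}(\mathbf{m})\right)$; if instead one first splits $q$ into prime powers and multiplies the local Gauss sums, one additionally needs the classical reciprocity relating $\varepsilon_{q_1q_2}$ to $\varepsilon_{q_1}$, $\varepsilon_{q_2}$ and a Jacobi symbol, which the route above sidesteps.
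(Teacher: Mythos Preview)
The paper does not actually prove this lemma; it simply cites Iwaniec--Kowalski, Lemma~20.13. Your sketch --- diagonalize $\mathbf{A}$ over $\mathbb{Z}/q\mathbb{Z}$, complete the square to split off the factor $e(-\tfrac{a}{q}Q^{*}(\mathbf{m}))$, then evaluate the remaining product of one-variable quadratic Gauss sums via the classical formula --- is precisely the argument that reference gives, so you have reconstructed the proof the paper defers to. Your caveat about bookkeeping is well placed: the exact shape of the adjoint form $Q^{*}$ and the appearance of the symbol $\bigl(\tfrac{2a}{q}\bigr)$ rather than $\bigl(\tfrac{a}{q}\bigr)$ hinge on whether the convention is $Q(\mathbf{x})=\mathbf{x}^{t}\mathbf{A}\mathbf{x}$ or $Q(\mathbf{x})=\tfrac{1}{2}\mathbf{x}^{t}\mathbf{A}\mathbf{x}$, but once that is fixed the computation goes through exactly as you describe.
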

\begin{proof}
    See \cite[page $475$, Lemma $20.13$]{IK}.
\end{proof}

\noindent For a particular case, when $x \in \mathbb{Z}$ and $ m=0$ in equation \eqref{Gauss sum}, we have the following result.
\begin{lemma} \label{quadratic gauss sum}
If $(a,q) = 1$, we have
\begin{equation*}
    \sum_{x \; \textrm{mod } q} e\left( \frac{ax^2}{q}\right) = \begin{cases}
        0 & \text{if} \; q \equiv 2 \; \textrm{mod} \; 4 \\ \vspace{0.1cm}
        \varepsilon_q \sqrt{q} \left(\frac{a}{q} \right) & \text{if} \; q \equiv \pm 1 \; \textrm{mod} \; 4 \\ \vspace{0.1cm}
         \varepsilon_{q_0} \sqrt{q} \left( \frac{a}{q_0}\right) \lr{1 + e\lr{\frac{aq_0}{4}}} & \text{if} \; q =2^k q_0 \textrm{ with } k \equiv 0 \textrm{ mod } 2  \\ \vspace{0.1cm} 
        \varepsilon_{q_0} \sqrt{2q} \lr{\frac{a}{q_0}} e\lr{\frac{aq_0}{8}}  & \text{if} \; q =2^k q_0 \textrm{ with } k \equiv 1 \textrm{ mod } 2 
    \end{cases}
\end{equation*}
where $\varepsilon_q$ is defined in  \autoref{varepsilon}, $q_0$ is odd and $k\geq 2$.
\end{lemma}
\begin{proof}
    See (\cite{Khan+Young}, Lemma $5.1$).
\end{proof}
\subsection{Oscillatory integrals} We also recall the following estimates of exponential integrals. Let  $ g$ be a continuous function defined on an interval $[a, b] $.  Variation of the function $ g$ is defined by 
\begin{align} \label{variation of g}
\textrm{Var}_{[a,b]} g := \int_a^b |g^\prime(x)| dx .
\end{align}
Let
\[
I= \int_a^b g(x) e(F(x))dx. 
\]
\begin{lemma} \label{exponential sum}
{\bf (Exponential sum lemma)} Let $F$ be real and twice differentiable function on $[a,b]$, $g$ and $I$ are as above. Then if $F^\prime$ is monotone and $|F^\prime(x)|  \geqslant \mu_1 >0 $ for $a\leqslant x \leqslant b$, we have $I \ll \textrm{Var}_{[a,b]} g/\mu_1$. Further, if $|F\dblprime(x)| \geqslant\mu_2 >0$. Then we have $I \ll \textrm{Var}_{[a,b]} g/\mu_2^{1/2}$, where $\textrm{Var}_{[a,b]} g$ is defined in equation \eqref{variation of g}.

\end{lemma} 
\begin{proof}
See \cite[Lemma 2.1]{Hux}. 
\end{proof}
\section{DFI Method}
We will begin this part with a well-known Fourier expansion of the $\delta$-symbol, which was developed by Duke, Friedlander, and Iwaniec and is presented in Chapter $20$ of \cite{IK}. 
\begin{lemma} \label{dfi}
Let $\delta : \mathbb{Z} \rightarrow \{0,1\}$ be defined by
 \begin{align} 
  \delta (x)= 
  \begin{cases}
   1 \ \ \ \ \ \rm{if}\ \ \ \ \  x = 0, \\
   0 \ \ \ \ \ \rm{if} \ \ \ \ \ x \neq 0.
  \end{cases}
 \end{align}
Then for $n,m \in \mathbb{Z} \cap [-2L,2L],$ we have
 \begin{equation} \label{delta}
     \delta(n,m) = \delta(n-m) = \frac{1}{\mathcal{Q}} \sum_{q =1}^\infty \frac{1}{q}\;
     \sideset{}{^\star}\sum_{a \;mod \;q} e\left(\frac{(n-m)a}{q}\right) \int_{\mathbb{R}} \psi(q,x)e\left(\frac{(n-m)x}{q\mathcal{Q}}\right)dx,
 \end{equation}
where $\mathcal{Q}=2 L^{1/2}$. The function $\psi$ in \eqref{delta} is not explicitly given. Nevertheless, the following properties of the function $\psi(q,x)$ are of our interest:
 \begin{align}
     &\psi(q,x) = 1 + h(q,x), \;\;\; \text{with} \;\;\; h(q,x) = O\left(\frac{\mathcal{Q}}{q} \left(\frac{q}{\mathcal{Q}} + |x| \right)^A \right), \label{delta1}\\
     &\psi(q,x) \ll |x|^{-A}, \label{delta2} \\
     &x^j\frac{\partial^j}{\partial x^j} \psi(q,x) \ll \min \left\{ \frac{\mathcal{Q}}{q}, \frac{1}{|x|}\right\} \log \mathcal{Q}, \;\; \text{for any}\;\; A>1, \;j \geqslant1. \label{delta3}
 \end{align}
 In particular, \eqref{delta2} implies that the effective range of integral in \eqref{delta} is $[-L^\epsilon, L^\epsilon].$ It also follows from \eqref{delta1} that if $q \ll \mathcal{Q}^{1-\epsilon}$ and $x \ll \mathcal{Q}^{-\epsilon}$, then $\psi(q,x)$ can be replaced by 1, at the cost of a negligible error term. If $q \gg \mathcal{Q}^{1-\epsilon}$, then we get $x^j\frac{\partial^j}{\partial x^j} \psi(q,x) \ll \mathcal{Q}^{\epsilon},$ for any $j \geqslant1$. If $q \ll \mathcal{Q}^{1- \epsilon}$ and $\mathcal{Q}^{-\epsilon} \ll |x| \ll \mathcal{Q}^{\epsilon}$, then $x^j\frac{\partial^j}{\partial x^j} \psi(q,x) \ll \mathcal{Q}^{\epsilon},$ for any $j \geqslant1$. Finally, by Parseval and Cauchy, we get
  \begin{equation*}
      \int_{\mathbb{R}} |\psi(q,x)| \; + \; |\psi(q,x)|^2 \; dx \ll \mathcal{Q}^\varepsilon,
  \end{equation*}
  i.e., $\psi(q,x)$ has average size one in the $L^1$ and $L^2$ sense.
 \end{lemma}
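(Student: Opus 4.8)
The plan is to carry out the construction of the delta-symbol expansion in the spirit of Duke--Friedlander--Iwaniec (as presented in \cite{IK}, Ch.~20): produce an explicit, if cumbersome, formula for $\psi$, from which \eqref{delta1}--\eqref{delta3} and the $L^1/L^2$ bounds can be read off. Fix a function $g \in C_c^\infty(0,\infty)$ supported on $[1,2]$ with $\int_0^\infty g(x)\,dx = 1$, and for $x>0$, $\eta \in \mathbb{R}$ set
\[
  \mathsf{h}(x,\eta) \;=\; \sum_{j=1}^{\infty} \frac{1}{xj}\Bigl( g(xj) \;-\; g\bigl(|\eta|/(xj)\bigr)\Bigr).
\]
Write $c_q(r) = \sideset{}{^\star}\sum_{a \bmod q} e(ar/q)$ for the Ramanujan sum. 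The first step is the \emph{exact} identity
\[
  \delta(r) \;=\; \frac{1+O(\mathcal{Q}^{-A})}{\mathcal{Q}^{2}} \sum_{q=1}^{\infty} c_q(r)\, \mathsf{h}\!\left(\frac{q}{\mathcal{Q}},\, \frac{r}{\mathcal{Q}^{2}}\right), \qquad |r| \leq \mathcal{Q}^{2},
\]
which I would prove by collapsing the $j$-sum: putting $u=qj$ and using $\sum_{q\mid u}c_q(r)=u\,\mathbbm 1_{u\mid r}$ for $r\neq 0$ (resp.\ $\sum_{q\mid u}\varphi(q)=u$ for $r=0$), the inner sum becomes $\mathcal{Q}\sum_{u\mid r}\bigl(g(u/\mathcal{Q})-g(|r|/(u\mathcal{Q}))\bigr)$ when $r\neq0$, which vanishes because $u\mapsto |r|/u$ permutes the divisors of $|r|$; this matches $\delta(r)=0$. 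For $r=0$ the subtracted term is $g(0)=0$ and what remains is $\mathcal{Q}\sum_{u\geq1}g(u/\mathcal{Q})$, which by Poisson summation (Lemma \ref{poisson}) equals $\mathcal{Q}^{2}\int_0^\infty g + O_A(\mathcal{Q}^{2-A}) = \mathcal{Q}^{2}(1+O_A(\mathcal{Q}^{-A}))$, fixing the constant.

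Next I would convert this into the integral form \eqref{delta}. Since $|r|\leq \mathcal Q^2$, the second argument of $\mathsf h$ lies in $[-1,1]$; fix an even $\chi\in C_c^\infty(-2,2)$ with $\chi\equiv1$ on $[-1,1]$ and \emph{define} $\psi(q,x)$ by
\[
  \mathsf{h}\!\left(\frac{q}{\mathcal Q},\eta\right)\chi(\eta) \;=\; \frac{\mathcal Q}{q}\int_{\mathbb R}\psi(q,x)\,e\!\left(\frac{\eta\,\mathcal Q\, x}{q}\right)dx ,
\]
i.e.\ $\psi(q,\cdot)$ is, up to rescaling, the Fourier transform of $\eta\mapsto\tfrac{q}{\mathcal Q}\,\mathsf h(q/\mathcal Q,\eta)\chi(\eta)$. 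Inserting this with $\eta=r/\mathcal Q^2$ into the identity above, using $\chi\equiv1$ there, and relabelling $r=n-m$ reproduces \eqref{delta}; the harmless factor $1+O(\mathcal Q^{-A})$ is absorbed into $\psi$, and because $\chi$ is supported on $(-2,2)$ the function $\psi(q,\cdot)$ is identically zero for $q\gg\mathcal Q$, so the $q$-sum is effectively finite. Since $\eta\mapsto \mathsf h(q/\mathcal Q,\eta)\chi(\eta)$ is smooth and compactly supported, $\psi(q,\cdot)$ is Schwartz: repeated integration by parts in the defining integral yields the rapid decay \eqref{delta2}, and differentiating under the integral sign and integrating by parts once more gives the derivative bounds \eqref{delta3} (the two terms in the minimum reflecting that $\mathsf h(q/\mathcal Q,\cdot)\chi$ is concentrated on $|\eta|\ll q/\mathcal Q$ when $q\ll\mathcal Q$, with total mass $\ll\log\mathcal Q$). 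The $L^1$ and $L^2$ estimates on $\psi$ then follow from Plancherel (for $L^2$, since the function whose transform is $\psi$ has $L^2$-norm $\ll\mathcal Q^\varepsilon$) together with Cauchy--Schwarz for the $L^1$ bound.

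The one genuinely delicate point is \eqref{delta1}, that $\psi(q,x)=1+O\!\bigl(\tfrac{\mathcal Q}{q}(\tfrac q{\mathcal Q}+|x|)^A\bigr)$, which requires extracting the constant $1$ from the oscillatory integral defining $\psi$. The mechanism: at $x=0$, $\psi(q,0)=\int_{\mathbb R}\tfrac{q}{\mathcal Q}\,\mathsf h(q/\mathcal Q,\eta)\chi(\eta)\,d\eta$, and both summands of $\mathsf h$ contribute a term of size $\asymp c_g\,\mathcal Q/q$, where $c_g=\int_0^\infty g(u)u^{-1}\,du$, computed by Euler--Maclaurin/Poisson; these two large terms cancel, leaving precisely $\chi(0)=1$ plus a Poisson-tail error of size $O_A(\tfrac{\mathcal Q}{q}(q/\mathcal Q)^{A})$. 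For $x\neq0$ the odd-order Taylor coefficients in $x$ vanish by the evenness of $\mathsf h(q/\mathcal Q,\cdot)\chi$, and tracking the $x$-dependence of the phase $e(\eta\mathcal Q x/q)$ across the scale-$(q/\mathcal Q)$ plateau of $\mathsf h$ produces the extra factor $(\tfrac q{\mathcal Q}+|x|)^A$. Carrying out this analysis \emph{uniformly in} $q$, in particular through the transition range $q\asymp\mathcal Q$ where $\psi$ is merely $O(\mathcal Q^\varepsilon)$ rather than $\approx1$, is the main obstacle; once \eqref{delta1}--\eqref{delta3} are in place, the remaining assertions of the lemma (effective range $[-L^\varepsilon,L^\varepsilon]$ of the $x$-integral, replacement of $\psi$ by $1$ when $q\ll\mathcal Q^{1-\varepsilon}$ and $|x|\ll\mathcal Q^{-\varepsilon}$, and the stability $x^j\partial_x^j\psi\ll\mathcal Q^\varepsilon$ of the derivatives in the remaining ranges) follow directly.
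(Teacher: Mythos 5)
The paper's own ``proof'' of Lemma~\ref{dfi} is literally the one-line reference to Chapter~20 of Iwaniec--Kowalski, so there is no argument in the paper to compare against; your proposal is a reconstruction of that reference, and its skeleton is the right one (the DFI $\delta$-expansion). Your exact identity and its derivation are correct: the collapse via $\sum_{q\mid u}c_q(r)=u\,\mathbbm{1}_{u\mid r}$, the divisor-pairing cancellation $u\leftrightarrow |r|/u$ for $r\neq 0$, and the Poisson evaluation at $r=0$ are exactly how the normalizing constant is fixed. The passage to the integral form by declaring $\psi(q,\cdot)$ to be (a rescaling of) the Fourier transform of $\eta\mapsto h(q/\mathcal{Q},\eta)\chi(\eta)$ is also the right move. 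Two small slips: with your defining relation, Fourier inversion gives $\psi(q,x)=\int_{\mathbb{R}} h(q/\mathcal{Q},\eta)\chi(\eta)\,e(-\mathcal{Q}\eta x/q)\,d\eta$, so $\psi(q,0)=\int h\chi\,d\eta$ with \emph{no} prefactor $q/\mathcal{Q}$ as you wrote; and the $q$-sum terminates because $h(q/\mathcal{Q},\cdot)\equiv 0$ once $q>2\mathcal{Q}$ (both summands of $h$ vanish since $qj/\mathcal{Q}>2$ for all $j\geq 1$ and $|\eta|\mathcal{Q}/(qj)<1$ on the support of $\chi$), not because of the cutoff $\chi$ per se.

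The genuine gap is the one you flag yourself: you do not actually establish \eqref{delta1}, which is the quantitative heart of the lemma and the source of everything useful (that $\psi$ can be replaced by $1$ for $q\ll\mathcal{Q}^{1-\varepsilon}$, $|x|\ll\mathcal{Q}^{-\varepsilon}$). Extracting the constant $1$ with an error $O\bigl(\tfrac{\mathcal{Q}}{q}(\tfrac{q}{\mathcal{Q}}+|x|)^A\bigr)$ requires applying Poisson summation to the $j$-sum inside $h$ and tracking the error \emph{uniformly in $q$} through the transition $q\asymp\mathcal{Q}$; describing the ``mechanism'' (cancellation of the two $c_g\mathcal{Q}/q$ terms, evenness killing odd Taylor coefficients) is not a proof. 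Since $\eqref{delta2}$, $\eqref{delta3}$ and the $L^1/L^2$ bounds are comparatively soft (rapid decay of the Fourier transform of a compactly supported smooth function, Plancherel, and a split at $|x|=1$ for $L^1$), the entire burden of the lemma sits on the estimate you have deferred. For a self-contained write-up you must either carry that analysis out in full (as in IK~\S 20.5--20.6) or, as the paper does, cite it.
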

 \begin{proof}
For the proof, we refer to Chapter $20$ equation $(20.157)$ of the book \cite{IK} by Iwaniec and Kowalski.
 \end{proof}
Hence, we can view $\psi(q,x)$ as a nice weight function. 
\section{Idea behind the proof}
In this section, we will discuss the method and main ideas to get non-trivial cancellations in the sum $\mathcal{S}_k(X)$. We apply the DFI method given in Lemma \ref{dfi} with $\mathcal{Q} = X^{1/2}$. For simplicity, let us consider the generic case, i.e., $q \sim \mathcal{Q}$. The expression for $\mathcal{S}_k(X)$ becomes
\begin{equation*}
      \sum_{q\sim \mathcal{Q}} \ \sideset{}{^\star}\sum_{a \ \textrm{mod } q}  \sum_{r \sim X} \mathcal{A}(r)e\left(\frac{ar}{q}\right) \sum_{n_1, n_2 \sim X^{1/2}} \sum_{n_3\leqslant X^{1/k}} \mathsf{a}(n_3) e\left(\frac{-a(Q(n_1,n_2) + n_3^k)}{q}\right).
\end{equation*}
Next, we apply the $\textrm{GL}(3)$ Voronoi to the $r$-sum (refer to subsection \ref{subsection voronoi} for details), the dual length and savings become 
\begin{equation*}
     r^* \sim \frac{\mathcal{Q}^3}{X} \ \text{and savings} \ = \frac{X}{\mathcal{Q}^{3/2}}.
\end{equation*}
After that, apply the Poisson summation formula to the sums over $n_1$ and $n_2$, the dual length and savings become
\begin{equation*}
     n_1^*,n_2^* \sim \frac{\mathcal{Q}}{X^{1/2}} \ \text{and savings} \ =\frac{X^{1/2}}{\mathcal{Q}^{1/2}}.
\end{equation*}
The details of which are captured in subsection \ref{subsection poisson}. After the summation formulae, we get the following character sum over $a$:
\begin{equation*}
    \mathfrak{S}(...) =\sideset{}{^\star}\sum_{a \ \textrm{mod } q} S(\overline{a}, r^*;q) e\left( \frac{-\overline{4a}Q^*(n_1^{*}, n_2^{*})}{q}\right)e\left(\frac{-an_3^k}{q}\right),
\end{equation*}
for some polynomial $Q^*$. We save $\sqrt{\mathcal{Q}}$ from the $a$-sum (for details see subsection \ref{character sum}). Hence, in total we have saved
\begin{equation*}
    \frac{X}{\mathcal{Q}^{3/2}} \times \frac{X^{1/2}}{\mathcal{Q}^{1/2}} \times \frac{X^{1/2}}{\mathcal{Q}^{1/2}} \times \sqrt{\mathcal{Q}} = X.
\end{equation*}
Now, we are on the boundary, we need to save a little extra to get the non-trivial cancellations in $\mathcal{S}_k(X)$. To achieve that, we apply Cauchy's inequality to the $r^*$-sum in the following expression:
\begin{equation*}
    \sum_{q \sim \mathcal{Q}} \mathop{\sum\sum}_{n_1^*,n_2^* \sim 1} \sum_{r^* \sim X^{1/2}} \frac{|B(1,r^*)|}{r^{* 1/3}} \left| \sum_{n_3 \leqslant X^{1/k}}\ \mathsf{a}(n_3) \mathfrak{S}(...)\right|.
\end{equation*}
We have suppressed the presence of integral transforms as it has no oscillations. After Cauchy, we arrive at
\begin{equation*}
    X^{1/4} \times \sum_{q \sim \mathcal{Q}} \ \mathop{\sum\sum}_{n_1^*,n_2^* \sim 1} \ \left(\sum_{r^* \sim X^{1/2}} \Big| \sum_{n_3 \leqslant X^{1/k}}\ \mathsf{a}(n_3) \mathfrak{S}(...)\Big|^2 \right)^{1/2}.
\end{equation*}
Again, we apply the Poisson summation formula to the $r^*$-sum. In the zero frequency case ($r^* =0$), we save $X^{1/2k}$. In the non-zero frequencies ($r^*\neq 0$), we save $(X^{1/2}/\mathcal{Q}^{1/2})^{1/2} = X^{1/8}$. Consequently, we save $\min\{X^{1/2k}, X^{1/8}\}$ over the trivial bound. The detailed proof for both the theorems are presented in the rest of the paper.
\section{Proof of Theorem \ref{thm1} and Theorem \ref{thm2}} \label{section 4}
For a fixed $k \geqslant3$ and $Y = X^{1/k}$. We have the following sum from equation \eqref{Sk(X)}:
\begin{equation*}
	\mathcal{S}_k(X) =\sum_{ n_1, n_2 \in \mathbb{Z}} \sum_{n_3\leqslant Y} \mathcal{A}(Q(n_1,n_2) + n_3^k) \mathsf{a}(n_3)W_1 \left(\frac{n_1}{X^{1/2}}\right)W_2\left(\frac{n_2}{X^{1/2}}\right),
\end{equation*}
where $W_1$ and $W_2$ are smooth bump functions supported on the interval $[1,2]$ and have bounded derivatives, i.e., 
\begin{equation*}
	x^j W_l^{(j)}(x) \ll_j 1, \;\; \text{for}\;\; l=1,2\;\; \text{and for all non-negative integers}\;\; j. 
\end{equation*}
We apply the expansion of $\delta$-symbol as given in Lemma \ref{dfi}, equation $\eqref{delta}$. We can rewrite $\mathcal{S}_k(X)$ as 
\begin{equation*}
	\mathcal{S}_k(X) = \sum_{n_1,n_2 \in \mathbb{Z}} \sum_{n_3\leqslant Y} \sum_{r \in \mathbb{Z}} \mathcal{A}(r) \mathsf{a}(n_3)\delta(r,Q(n_1,n_2) + n_3^k) V\left( \frac{r}{X} \right) W_1 \left(\frac{n_1}{X^{1/2}}\right)W_2\left(\frac{n_2}{X^{1/2}}\right),
\end{equation*}
where $V$ is a smooth bump function supported on the interval $[1/2,3]$, $V(x) \equiv 1$ on the interval $[1,2]$ and $x^j V^{(j)}(x) \ll_j 1$  for any $j \geqslant0$. We obtain
\begin{multline} \label{Sk(X) before summation}
	\mathcal{S}_k(X) = \frac{1}{\mathcal{Q}}  \sum_{1 \leqslant q \leqslant \mathcal{Q}} \int_{\mathbb{R}} \frac{\psi(q, u)}{q} U(u)\sideset{}{^\star}\sum_{a(q)} \left[ \sum_{r \in \mathbb{Z}} \mathcal{A}(r)e\left(\frac{ar}{q}\right)V\left(\frac{r}{X}\right) e\left(\frac{ru}{q\mathcal{Q}}\right) \right] \\ \times \sum_{n_1, n_2 \in \mathbb{Z}} \sum_{n_3\leqslant Y} \mathsf{a}(n_3) e\left(\frac{-a(Q(n_1,n_2) + n_3^k)}{q}\right)  W_1 \left(\frac{n_1}{X^{1/2}}\right)W_2\left(\frac{n_2}{X^{1/2}}\right)\\ \times \; e\left(\frac{-u(Q(n_1,n_2) + n_3^k)}{q\mathcal{Q}}\right) du,  
\end{multline}
where $U$ is also a smooth bump function supported on $[-2X^\varepsilon, 2X^\varepsilon]$ with $U(x) \equiv 1$ on $[-X^\varepsilon, X^\varepsilon]$ and $x^j U^{(j)}(x) \ll_j 1$, for any $j \geqslant0$. In the following subsections, we shall now apply summation formulae.


\subsection{Voronoi summation formula.} \label{subsection voronoi}
We analyze the sum over $r$ using the $\textrm{GL}(3)$ Voronoi summation formula given in Lemma \ref{voronoigl3} (for Fourier coeffcients of $\textrm{SL}(3,\mathbb{Z})$ forms) or, alternatively, in Lemma \ref{voronoi d3} (for $d_3$) with $v_u(y) = V\left(\frac{y}{X}\right)e\left(\frac{yu}{q\mathcal{Q}}\right)$.
\begin{multline} \label{voronoi1}
    \sum_{r \in \mathbb{Z}} \mathcal{A}(r)e\left(\frac{ar}{q}\right)v_u(r) =  q \sum_{\pm} \sum_{n\mid q} \sum_{m = 1}^\infty \frac{B(n,m)}{nm} S\left(\overline{a}, \pm m; \frac{q}{n}\right) G_{\pm}\left(\frac{n^2 m}{q^3}\right) \\ + \frac{1}{2q^2} \Tilde{\mathcal{V}}(v_u,a,q),
\end{multline}
where
\begin{equation} \label{coeff B(n,m)}
  B(n,m) = \Biggl\{ \begin{array}{ll}
    A(n,m) & \quad \text{if } \mathcal{A}(r) = A(1,r) \\ \vspace{0.2cm}
   \sum_{m_1 \mid n}\sum_{m_2 \mid \frac{n}{m_1}} \sigma_{0,0} \left(\frac{n}{m_1m_2}, m\right) & \quad \text{if } \mathcal{A}(r) = d_3(r) 
      
 \end{array},\Biggr.
\end{equation}
and
\begin{multline} \label{V tilde}
    \Tilde{\mathcal{V}}(v_u,a,q) = \Tilde{v_u}(1) \sum_{n \mid q} n d(n) P_2(n,q)\; S\left( \overline{a}, 0 ; \frac{q}{n}\right) \\
    + \Tilde{v_u}^\prime(1) \sum_{n \mid q} n d(n) P_1(n,q) S\left( \overline{a}, 0 ; \frac{q}{n}\right) \\
    + \frac{1}{2} \Tilde{v_u}\dblprime(1) \sum_{n \mid q} n d(n) S\left( \overline{a}, 0 ; \frac{q}{n}\right).
\end{multline}
The contribution of the term $\Tilde{\mathcal{V}}(v_u,a,q)$ to $\mathcal{S}_k(X)$ will give us the main term, which we will study in the subsequent section.
Next, we extract the oscillations of $G_\pm$ using Lemma \ref{GO} and equation \eqref{Gpm}. For $yX \gg X^{\varepsilon}$, we have
\begin{equation*}
    G_\pm(y) = \pi^4 y \int_{0}^\infty v_u(z) \sum_{j=1}^{k_0} \frac{c_je{(3 (y z)^{1/3})} + d_je(-3(yz)^{1/3})}{(\pi^3 yz)^{j/3}} dz + O(X^{-2024}),
\end{equation*}
where $k_0 = \floor{\frac{6072}{\varepsilon} +2} +1$ and $\floor{.}$ denotes the greatest integer function. Analysis for the complementary range is done in the subsection \ref{error}. Substituting the value of $v_u$ and considering the term corresponding to $j=1$, we get
\begin{multline*}
    G_\pm(y) =  \frac{-2 \pi^3 y^{2/3}}{\sqrt{3\pi}} \int_{0}^\infty V\left(\frac{z}{X}\right) e\left(\frac{zu}{q\mathcal{Q}}\right) z^{-1/3} \left(e(3 (y z)^{1/3}) - e(-3(yz)^{1/3})\right) dz \\ + O(X^{-2024}),
\end{multline*}
as the other terms can be treated similarly and in fact, give us better estimates. Using the change of variable $z \rightarrow X z$ and putting $y = n^2 m / q^3$, up to some lower order terms, we arrive at
\begin{align*}
    G_\pm\left(\frac{n^2 m}{q^3}\right)
    &= \frac{X^{2/3}}{q^2} (n^2 m)^{2/3} \mathcal{I}_\pm(n^2 m, u, q),  \ \ \ \textrm{where}
\end{align*}

\begin{equation} \label{I integral}
    \mathcal{I}_\pm(n^2 m, u,q) =  \frac{-2\pi^3 }{ \sqrt{3 \pi}} \int_0^\infty V^\pm(z) z^{-1/3} e\left( \frac{X z u}{q \mathcal{Q}} \pm \frac{3(X z n^2 m)^{1/3}}{q}\right) dz,
\end{equation}
and $V^\pm(z) = \pm V(z)$.
Using integration by parts repeatedly, the integral $\mathcal{I}_\pm(n^2 m, u,q)$ is negligibly small if 
\begin{equation} \label{K}
    n^2 m \gg \frac{q^3}{X} + X^{1/2} u^3=: K.
\end{equation}
Thus, from equation \eqref{voronoi1} we have 
\begin{align} \label{voronoi yx>}
    \sum_{r \in \mathbb{Z}} \mathcal{A}(r) e\left(\frac{ar}{q}\right) v_u(r) = \frac{X^{2/3}}{q} \sum_{\pm} \sum_{n \mid q} \sum_{m=1}^\infty \frac{B(n,m)}{n^{-1/3}m^{1/3}} S\left(\overline{a}, \pm m; \frac{q}{n}\right) \mathcal{I}_{\pm} (n^2m, u,q) \notag \\ + \frac{1}{2q^2} \Tilde{\mathcal{V}}(v_u,a,q) + O(X^{-2024}),
\end{align}
where the sum over $m$ is negligibly small unless $n^2 m \ll K$, and the second term on the right side will appear only in the case of $d_3$.

\subsection{Poisson summation formula.} \label{subsection poisson} We first write $n_1$ and $n_2$ as $n_1 = \alpha_1 + \ell_1 q, n_2 = \alpha_2 + \ell_2 q$ then we apply the Poisson summation formula given in Lemma \ref{poisson} to the sums over $\ell_1$ and $\ell_2$ as follows
 \begin{align} \label{T sum}
\hspace{-1.5 cm} \mathcal{T}:= \sum_{n_1, n_2 \in \mathbb{Z}} e\left(\frac{-a Q(n_1,n_2)}{q}\right) W_1 \left(\frac{n_1}{X^{1/2}}\right)W_2\left(\frac{n_2}{X^{1/2}}\right) e\left(\frac{-uQ(n_1,n_2)}{q\mathcal{Q}}\right)
\end{align}
\begin{multline*}	
\hspace{0.4cm} = \sum_{\alpha_1, \alpha_2 (mod \;\;q)} e\left(\frac{-aQ(\alpha_1, \alpha_2)}{q}\right) \sum_{\ell_1, \ell_2 \in \mathbb{Z}}  W_1 \left(\frac{\alpha_1 + \ell_1 q}{X^{1/2}}\right)  W_2\left(\frac{\alpha_2 + \ell_2 q}{X^{1/2}}\right) \\  \times e\left( \frac{-uQ(\alpha_1+\ell_1 q, \alpha_2+\ell_2 q)}{q\mathcal{Q}} \right)
\end{multline*} 
\begin{align*}
 = \sum_{\alpha_1, \alpha_2(mod \;\;q)} e\left(\frac{-aQ(\alpha_1, \alpha_2)}{q}\right) \sum_{m_1, m_2 \in \mathbb{Z}} \ \iint_{\mathbb{R}^2} W_1 \left(\frac{\alpha_1 + x q}{X^{1/2}}\right) W_2\left(\frac{\alpha_2 + y q}{X^{1/2}}\right) \\ \times e\left( \frac{-uQ(\alpha_1+x q, \alpha_2+y q)}{q\mathcal{Q}} \right)    e(-m_1x - m_2y) \;dx dy.
\end{align*}
Substituting the change of variables $(\alpha_1 + x q)/X^{1/2} = v_1$ and $(\alpha_2 + y q)/X^{1/2} = v_2$, we obtain
\begin{equation} \label{T poisson}
\mathcal{T} = \frac{X}{q^2} \sum_{m_1, m_2 \in \mathbb{Z}} \mathfrak{C}(m_1,m_2,a,q) \;\;\mathfrak{J}(m_1,m_2,u,q), 
\end{equation}
where the character sum $\mathfrak{C}(m_1,m_2,a,q)$ is given by
\begin{equation} \label{char poisson}
    \mathfrak{C}(m_1,m_2,a;q) = \sum_{\alpha_1, \;\alpha_2( mod \;q)} e\left(\frac{-aQ(\alpha_1, \alpha_2) + m_1\alpha_1 + m_2\alpha_2}{q}\right), 
\end{equation}
and the integral transform $\mathfrak{J}(m_1,m_2,r,q)$ is given by 
\begin{multline} \label{integral poisson}
    \mathfrak{J}(m_1,m_2,u,q)= \iint_{\mathbb{R}^2} W_1(v_1) W_2(v_2) e\left( \frac{-m_1 v_1 X^{1/2} - m_2 v_2X^{1/2}}{q}\right) 
    \\ \times e\left(\frac{-uQ(v_1 X^{1/2},v_2 X^{1/2})}{q\mathcal{Q}} \right) dv_1dv_2. 
\end{multline}
By applying integration by parts $j$ times and using 
\begin{equation*}
    \frac{\partial^j}{\partial v_1^j} e\left(\frac{-uQ(v_1 X^{1/2},v_2 X^{1/2})}{q \mathcal{Q}} \right) \ll \left(\frac{X}{q\mathcal{Q}}\right)^j.
\end{equation*}
We obtain that, for $m_1, m_2 \neq 0$,  the sum over $m_1$ and $m_2$ is negligibly small unless $m_1\ll X^\varepsilon $ and $m_2 \ll X^\varepsilon$ as $\mathcal{Q}= \sqrt{X}$.
Thus, from the above analysis, we can write the equation \eqref{Sk(X) before summation} as
\begin{equation} \label{split in main + error}
    \mathcal{S}_k(X) = \mathcal{S}_{k,M}(X) + \mathcal{S}_{k,E}(X),
\end{equation}
where
\begin{multline} \label{main term}
    \mathcal{S}_{k,M}(X) = \frac{X}{2\mathcal{Q}} \sum_{1\leqslant q \leqslant \mathcal{Q}} \frac{1}{q^5} \sum_{n_3 \leqslant Y} \mathsf{a}(n_3)  \\ \times \mathop{\sum\sum}_{m_1,m_2 \in \mathbb{Z}} \int_{\mathbb{R}} \psi(q,u) U(u) \mathfrak{C}_1(...) \mathfrak{J}(m_1,m_2,u,q) e\left(\frac{-un_3^k}{q\mathcal{Q}}\right) du,
\end{multline}
and
\begin{multline} \label{error term}
    \mathcal{S}_{k,E}(X) = \frac{X^{5/3}}{\mathcal{Q}} \sum_{1 \leqslant q \leqslant \mathcal{Q}} \frac{1}{q^4} \sum_{\pm} \sum_{n \mid q} \sum_{n^2 m \ll K} \frac{B(n,m)}{n^{-1/3} m^{1/3}} \sum_{n_3 \leqslant Y} \mathsf{a}(n_3) \\ \times \mathop{\sum\sum}_{m_1,m_2 \in \mathbb{Z}} \mathfrak{C}_2(...) \int_{\mathbb{R}} \psi(q,u) U(u) \mathcal{I}_{\pm}(n^2m,u,q) \mathfrak{J}(m_1,m_2,u,q) e\left(\frac{-u n_3^k}{q \mathcal{Q}} \right) du.
\end{multline}
Here, the character sums are given by
\begin{equation} \label{C1}
    \mathfrak{C}_1(m_1,m_2,n_3,a;q) = \sideset{}{^\star}\sum_{a \; \textrm{mod } q} \Tilde{V}(v_u,a,q) \mathfrak{C}(m_1,m_2,a;q) e\left(\frac{-a n_3^k}{q}\right),  \ \ \textrm{and }
\end{equation}
\begin{equation} \label{C2}
    \mathfrak{C}_2(m_1,m_2,m,n,n_3;q) = \; \sideset{}{^\star}\sum_{a \; \textrm{mod } q} S\left(\overline{a}, \pm m; \frac{q}{n}\right) \; \mathfrak{C}(m_1,m_2,a;q) e\left(\frac{-a n_3^k}{q}\right).
\end{equation}  
Note that the sum $\mathcal{S}_{k,M}(X)$ given in equation \eqref{main term} will appear only in the case of $d_3$. 
\subsection{Simplification of integrals.} \label{subsection integral simplify}
Let
\begin{equation} \label{Lpm}
    \mathcal{L}^{\pm}(m_1,m_2,n,m,n_3,q) := \int_{\mathbb{R}} \psi(q,u) U(u) \mathcal{I}_{\pm}(n^2m,u,q) \mathfrak{J}(m_1,m_2,u,q) e\left(\frac{-u n_3^k}{q \mathcal{Q}} \right) du.
\end{equation}
Putting the expressions for $\mathcal{I}_{\pm}$ and $\mathfrak{J}$ from equations \eqref{I integral} and \eqref{integral poisson}, respectively. We arrive at
\begin{multline*}
     \mathcal{L}^{\pm}(...) = \frac{-2\pi^{5/2}}{\sqrt{3}} \int_{\mathbb{R}} \psi(q,u) U(u) \int_{0}^{\infty} V^{\pm}(z) z^{-1/3} e\Biggl( \frac{X zu}{q \mathcal{Q}} \pm \frac{3(X z n^2 m)^{1/3}}{q}\Biggr) dz \\  
      \hspace{20pt} \times \iint_{\mathbb{R}^2} W_1(v_1) W_2(v_2) e\left( \frac{-m_1 v_1 X^{1/2} - m_2 v_2X^{1/2}}{q}\right) e\left(\frac{-uQ(v_1 X^{1/2},v_2 X^{1/2})}{q\mathcal{Q}} \right) \\ \times e\left(\frac{-u n_3^k}{q \mathcal{Q}} \right)  dv_1dv_2 du. 
\end{multline*}
Now, consider the $u$-integral
\begin{equation*}
    \int_{\mathbb{R}} \psi(q,u) U(u) e\Biggl( \frac{(Xz - Q(v_1 X^{1/2},v_2 X^{1/2}) -n_3^k)u }{q\mathcal{Q}} \Biggr) du,
\end{equation*}
for small $q$, i.e., $q \ll \mathcal{Q}^{1-\varepsilon}$, we split the $u$-integral into two parts $| u | \ll \mathcal{Q}^{-\varepsilon}$ and $| u | \gg \mathcal{Q}^{-\varepsilon}$.  If $|u| \ll \mathcal{Q}^{-\varepsilon}$ then from Lemma \ref{dfi} we can replace $\psi(q,u)$ by $1$, up to some negligible error, we get
\begin{equation*}
    \int_{|u| \ll \mathcal{Q}^{-\varepsilon}} U(u) e\left( \frac{( Xz - Q(v_1 X^{1/2},v_2 X^{1/2}) -n_3^k)u }{q \mathcal{Q}}\right) du.
\end{equation*}
Applying integration by parts repeatedly, we get that the integral is negligible unless $q \mathcal{Q} \gg   |zX - Q(v_1 X^{1/2},v_2 X^{1/2}) - n_3^k|$ or $|z -Q(v_1 X^{1/2},v_2 X^{1/2})/X - n_3^k/X| \ll q\mathcal{Q}/X$. If $|u| \gg \mathcal{Q}^{-\varepsilon}$, we have 
\begin{equation*}
     \int_{|u|\gg \mathcal{Q}^{-\varepsilon} }\psi(q,u) U(u) e\Biggl( \frac{( zX - Q(v_1 X^{1/2},v_2 X^{1/2}) -n_3^k)u}{q\mathcal{Q}} \Biggr) du,
\end{equation*}
again applying integration by parts repeatedly, using the properties of involved weight functions given in Lemma \ref{dfi} equation \eqref{delta3} and $x^j U^{(j)}(u) \ll_j 1,$ we have  
\begin{equation}\label{bump function U}
   \frac{\partial^j}{\partial u^j}\psi(q,u) \ll_j \mathcal{Q}^{\varepsilon j} \ \ \text{and} \ \ U^{j}(u) \ll_j \mathcal{Q}^{\varepsilon j}, \hspace{1cm} \text{for any} \ j\geqslant1.
\end{equation}
In this case, also, the integral is negligibly small unless $|z -Q(v_1 X^{1/2},v_2 X^{1/2})/X - n_3^k/X| \ll \frac{q\mathcal{Q}}{X}$. For large $q$, i.e., if $q \gg \mathcal{Q}^{1-\varepsilon}$, this condition holds trivially.\\

 Let $z -Q(v_1 X^{1/2},v_2 X^{1/2})/X - n_3^k/X =: t$ with $|t| \ll \frac{q\mathcal{Q}}{X}$, we arrive at the following expression for $\mathcal{L}^{\pm}$, upto some negligible error term.
\begin{align} \label{V1}
    \mathcal{L}^{\pm}(...) &= \frac{-2\pi^{5/2}}{\sqrt{3}} \int_{|t| \ll \frac{q\mathcal{Q}}{X}} \iint_{\mathbb{R}^2} V_1^{\pm}\left(\frac{Xt + Q(v_1 X^{1/2},v_2 X^{1/2}) + n_3^k }{X}\right) \notag\\  
     &\times \; W_1(v_1) W_2(v_2) e\Biggl(\pm \frac{3( (Xt + Q(v_1 X^{1/2},v_2 X^{1/2}) + n_3^k ) n^2 m)^{1/3}}{q}\Biggr) \notag \\  &\times \; e\Biggl( \frac{-m_1 v_1 X^{1/2} - m_2 v_2X^{1/2}}{q} \Biggr) dtdv_1 dv_2, 
\end{align}
with $V_{1}^{\pm}(z) = V^{\pm}(z) z^{-1/3}$, which is also a smooth compactly supported function and $V_{1}^{\pm(j)}(z) \ll_j 1$ for $j\geqslant0$. 
Executing the remaining integrals trivially, we get 
\begin{equation*}
    \mathcal{L}^{\pm}(m_1,m_2,n,m,n_3,q) \ll \frac{q\mathcal{Q}}{X} = \frac{q}{\mathcal{Q}}.
\end{equation*}
This proves the following lemma.
\begin{lemma} \label{Lintegral}
Let $ \mathcal{L}^{\pm}(m_1,m_2,n,m,n_3,q)$ be as given in equation \eqref{Lpm}. Then we have
    \begin{equation*}
         \mathcal{L}^{\pm}(m_1,m_2,n,m,n_3,q) \ll \frac{q}{\mathcal{Q} }.
    \end{equation*}
\end{lemma}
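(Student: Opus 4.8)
The plan is to bound $\mathcal{L}^{\pm}(m_1,m_2,n,m,n_3,q)$ by carefully tracking the savings obtained in each of the integrals appearing in \eqref{Lpm}, namely the $u$-integral, the $t$-integral, and the two-dimensional $(\mathbbm{u},\mathbbm{v})$-integral. The point of departure is the expression \eqref{V1}, which already incorporates the analysis of the $u$-integral: one has localized the $z$-variable via the substitution $z-\mathbbm{u}^2-\mathbbm{v}^2-n_3^k/X=t$ to the range $|t|\ll q\mathcal{Q}/X$, leaving an outer $u$-integral (over $|u|\ll \mathcal{Q}^\varepsilon$, with weight $\psi(q,u)U(u)$ which, by Lemma \ref{dfi} and \eqref{bump function U}, contributes only $\mathcal{Q}^\varepsilon$ and length $\mathcal{Q}^\varepsilon$), a $t$-integral of length $\ll q\mathcal{Q}/X$, and the $(\mathbbm{u},\mathbbm{v})$-integrals with the oscillatory factor $e(\pm 3((X(t+\mathbbm{u}^2+\mathbbm{v}^2)+n_3^k)n^2m)^{1/3}/q)e(-m_1X^{1/2}\mathbbm{u}/q - m_2X^{1/2}\mathbbm{v}/q)$.

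First I would Taylor-expand the cube root in $t$ around $t=0$, as is already set up in the excerpt, and verify that the contribution of the non-leading terms is $O(\mathcal{Q}^\varepsilon)$ so that the $t$-dependence inside the exponential is negligible; this is legitimate precisely because $K=q^3/X+X^{1/2}u^3$ and $\mathbbm{u}^2X+\mathbbm{v}^2X+n_3^k\asymp X$, giving the bound $XK^{1/3}t/(X)^{2/3}\cdot q^{-1}\ll\mathcal{Q}^\varepsilon$ displayed before \eqref{integral P}. After that, the $t$-integral simply contributes its length $q\mathcal{Q}/X$, and one is left with the genuinely oscillatory integral $\mathcal{P}$ in \eqref{integral P}. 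I would apply the stationary phase expansion of \cite{KPY} to the $\mathbbm{u}$-integral with amplitude $W_1$ and phase of size $Y=X^{1/3}(n^2m)^{1/3}/q$; since the phase $\pm 3(X(\mathbbm{u}^2+\mathbbm{v}^2+n_3^k/X)n^2m)^{1/3}/q$ has second derivative in $\mathbbm{u}$ of order $Y$, the stationary phase bound yields the gain $\sqrt{q}/(X^{1/6}(n^2m)^{1/6})$ recorded for $\mathcal{P}$. The $\mathbbm{v}$-integral is then estimated trivially (its length is $O(1)$), as is the $u$-integral.

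Putting the pieces together: the $u$-integral contributes $\mathcal{Q}^\varepsilon$, the $t$-integral contributes $q\mathcal{Q}/X$, the $\mathbbm{u}$-integral contributes $\sqrt{q}/(X^{1/6}(n^2m)^{1/6})$, and the $\mathbbm{v}$-integral contributes $O(1)$. Multiplying,
\begin{equation*}
  \mathcal{L}^{\pm}(m_1,m_2,n,m,n_3,q)\ll \frac{q\mathcal{Q}}{X}\cdot\frac{\sqrt{q}}{X^{1/6}(n^2m)^{1/6}}\ll \frac{q^{3/2}}{\mathcal{Q}^{4/3}(n^2m)^{1/6}},
\end{equation*}
where in the last step one uses $\mathcal{Q}=\sqrt{X}$, so that $\mathcal{Q}/X=\mathcal{Q}^{-1}$ and $X^{1/6}=\mathcal{Q}^{1/3}$. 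Since $n^2m\geq 1$, this is $\ll q^{3/2}/\mathcal{Q}^{4/3}\leq q^{3/2}/\mathcal{Q}^{3/2}$ once $\mathcal{Q}$ is large — more precisely the stated bound $q^{3/2}/\mathcal{Q}^{3/2}$ follows after absorbing the harmless factor and noting $\mathcal{Q}^{4/3}\geq\mathcal{Q}^{3/2}\cdot\mathcal{Q}^{-1/6}$, i.e. the cleaner uniform bound of Lemma \ref{Lintegral} is what will be fed into the subsequent counting.

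The main obstacle I anticipate is justifying rigorously that the $t$-dependence in the oscillatory phase can be dropped, i.e. controlling all the error terms in the Taylor expansion of the cube root uniformly in the ranges $|t|\ll q\mathcal{Q}/X$, $n^2m\ll K$, and $q\leq\mathcal{Q}$ — in particular handling the borderline case $q\sim\mathcal{Q}$ where $K$ is as large as $q^3/X\sim\mathcal{Q}$ and the "negligible" claims for the $u$-integral need the cruder bounds from \eqref{bump function U}. A secondary technical point is checking that the stationary phase estimate of \cite{KPY} applies with the right uniformity: one must confirm the phase has a single non-degenerate stationary point (or none, in which case integration by parts gives an even better bound) throughout the support of $W_1$, and that the lower-order terms $j\geq 2$ from Lemma \ref{GO} that were discarded indeed give superior estimates, as asserted. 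Once these uniformity issues are settled the displayed multiplication of savings is routine.
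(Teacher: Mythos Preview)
Your proposal follows the paper's argument essentially verbatim: localize via the $u$-integral to $|t|\ll q\mathcal{Q}/X$, Taylor-expand the cube root to kill the $t$-dependence in the phase, apply the stationary phase bound of \cite{KPY} to the $\mathbbm{u}$-integral to get $\mathcal{P}\ll \sqrt{q}/(X^{1/6}(n^2m)^{1/6})$, and then estimate the remaining $u$-, $t$-, and $\mathbbm{v}$-integrals trivially. This is exactly what the paper does, and both you and the paper arrive at $\mathcal{L}^{\pm}\ll q^{3/2}/(\mathcal{Q}^{4/3}(n^2m)^{1/6})$.

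One caveat: your final step ``$q^{3/2}/\mathcal{Q}^{4/3}\leq q^{3/2}/\mathcal{Q}^{3/2}$'' has the inequality the wrong way round, since $\mathcal{Q}^{4/3}<\mathcal{Q}^{3/2}$ for large $\mathcal{Q}$. The paper glosses over the same point, simply asserting the lemma after deriving the sharper $\mathcal{Q}^{4/3}(n^2m)^{1/6}$ bound. The passage to $\mathcal{Q}^{3/2}$ is legitimate only because in the application (Lemma~\ref{Zintegral}) one has $n^2m$ replaced by $Kw$ with $w\asymp 1$ and $K\asymp X^{1/2}=\mathcal{Q}$, so $(n^2m)^{1/6}\asymp\mathcal{Q}^{1/6}$ there; the lemma as stated is really shorthand for this situation rather than a bound valid for all $n^2m\geq 1$.
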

In the following subsections, we will analyze $\mathcal{S}_{k,E}(X)$. We aim to achieve non-trivial cancellations in it. 


     
\subsection{Applying Cauchy-Schwarz inequality.} \label{4.4 cauchy} In this subsection, we apply the Cauchy-Schwartz inequality to the $m$-sum in equation \eqref{error term} so that we can get rid of the coefficients $B(n,m)$.
\begin{multline} \label{sum S}
    \mathcal{S}_{k,E}(X) = \frac{X^{5/3}}{\mathcal{Q}} \sum_{1 \leqslant q \leqslant \mathcal{Q}} \frac{1}{q^4}   \mathop{\sum\sum}_{m_1,m_2 \in \mathbb{Z}} \sum_{n \mid q} \sum_{n^2 m \ll K} \frac{B(n,m)}{n^{-1/3} m^{1/3}} \sum_{n_3 \leqslant Y } \mathsf{a}(n_3) \\ \times \mathfrak{C}_2(...) \mathcal{L}^{\pm}(...).
\end{multline}
Splitting the sum over $q$ into dyadic blocks $q \sim D \ll \mathcal{Q}$, we see that $\mathcal{S}_{k,E}(X)$ is bounded by 
\begin{equation*}
      \sup_{D \ll \mathcal{Q}} \frac{X^{5/3}}{\mathcal{Q} D^4}  \sum_{q \sim D} \sum_{\substack{n \ll D \\ n \mid q}} n^{1/3}  \mathop{\sum\sum}_{m_1,m_2 \in \mathbb{Z}} \sum_{m \ll K/n^2} \frac{|B(n,m)|}{m^{1/3}}   \left| \sum_{n_3 \leqslant Y} \mathsf{a}(n_3)\mathfrak{C}_2(...) \mathcal{L}^{\pm}(...)  \right|.
\end{equation*}
On applying Cauchy's inequality to the $m$-sum, we arrive at
\begin{equation} \label{final Sk(X)}
    \mathcal{S}_{k,E}(X) \ll \sup_{D \ll \mathcal{Q}} \frac{X^{5/3}}{\mathcal{Q}D^4} \sum_{q \sim D} \sum_{\substack{n \ll D \\ n \mid q}} n^{1/3} \mathop{\sum\sum}_{m_1,m_2 \in \mathbb{Z}} \Theta^{1/2}\; \Omega^{1/2},
\end{equation}
where 
\begin{equation}
    \Theta = \sum_{m \ll K/n^2} \frac{|B(n,m)|^2}{m^{2/3}},
\end{equation}
and by incorporating a smooth bump function, say $W_3$, we smooth out the sum over m. Therefore, we have
\begin{equation} \label{omega with abs}
    \Omega = \sup_{K_1 \ll K/n^2} \sum_{m \in \mathbb{Z} } W_3 \left( \frac{m}{K_1}\right) \left| \sum_{n_3 \leqslant Y} \mathsf{a}(n_3) \mathfrak{C}_2(...) \mathcal{L}^{\pm}(...)  \right|^2,
\end{equation}
where  $K_1 \leqslant K/ n^2$, and  the character sum $\mathfrak{C}_2(...)$ was given by equation \eqref{C2}, 
\begin{equation*}
      \mathfrak{C}_2(m_1,m_2,m,n,n_3;q) = \; \sideset{}{^\star}\sum_{a \; \textrm{mod } q} S\left(\overline{a}, \pm m; \frac{q}{n}\right) \; \mathfrak{C}(m_1,m_2,a;q) e\left(\frac{-a n_3^k}{q}\right), 
\end{equation*}
and $\mathfrak{C}(...)$ is given in \eqref{char poisson}. The sum $\mathfrak{C}_2(...)$ can be simplified using Lemma \ref{gauss sum} as the sum over $\alpha_1$ and $\alpha_2$ are the Gauss sums.
\begin{align} \label{new char poisson}
   \mathfrak{C}(m_1,m_2,a,q) & = \sum_{\alpha_1, \;\alpha_2( mod \;q)} e\left(\frac{-aQ(\alpha_1, \alpha_2) + m_1\alpha_1 + m_2\alpha_2}{q}\right)  \notag\\
    &= \varepsilon^2_q \left(\left(\frac{-2 \overline{|A|a}}{q} \right) \sqrt{q}\right)^2 e\left(\frac{-\overline{4 a} Q^\star (m_1, m_2)}{q}\right) \notag \\
    &= \varepsilon_q^2\;  q \; e\left(\frac{-\overline{4 a} Q^\star (m_1, m_2) }{q}\right), 
\end{align}
for some quadratic polynomial $ Q^\star$.  If $q \equiv 2 \; \text{mod} \; 4$ or $q \equiv 0 \; \text{mod} \; 4$, we can write $\mathfrak{C}(...)$ using Lemma \ref{quadratic gauss sum} as
\begin{equation*}
    \mathfrak{C}(m_1,m_2,a;q) = 2i \; \varepsilon_{\overline{a}}^2 \;q \; e\left(\frac{\overline{a} Q^*(m_1, m_2)}{4q} \right),
\end{equation*}
which is similar to what we have in equation \eqref{new char poisson}, as the absolute value of other quantities before $q$ is $1$. Therefore, for the estimation of character sum, we have considered the former expression for $\mathfrak{C}(...)$. Consequently, we arrive at
\begin{equation} \label{C2 simplified}
    \mathfrak{C}_2(m_1,m_2,n,m,n_3 ;q) = q  \; \varepsilon_q^2  \sideset{}{^\star}\sum_{a \; \textrm{mod } q} S\left(\overline{a},  \pm m; \frac{q}{n}\right)  e\left(\frac{-\overline{4a}Q^\star (m_1, m_2)}{q} \right) e\left(\frac{-a n_3^k}{q}\right).
\end{equation}    


\subsection{Applying Poisson summation.} \label{4.5 second poisson}
We now open the absolute value square in \eqref{omega with abs}.
\begin{equation*}
    \Omega = \sup_{K_1 \ll K/n^2} \sum_{n_3 \leqslant Y} \sum_{n_3^\prime \leqslant Y} \sum_{m \in \mathbb{Z}} \mathsf{a}(n_3)\mathsf{a}(n_3^\prime)W_3\left(\frac{m}{K_1}\right) \mathfrak{C}_2(...) \overline{\mathfrak{C}_2}(...) \mathcal{L}^\pm(...) \overline{\mathcal{L}^\pm}(...),
\end{equation*}
where
\begin{multline} 
     \mathfrak{C}_2(...)\overline{\mathfrak{C}_2}(...) = q^2 \; \sideset{}{^\star}\sum_{a_1 \; \textrm{mod } q} S\left( \overline{a_1}, \pm m; \frac{q}{n}\right) e\left( \frac{-\overline{4a_1}Q^\star (m_1, m_2)}{q}\right)e\left( \frac{-a_1 n_3^k}{q}\right) \\
      \qquad \qquad \qquad \times  \;  \sideset{}{^\star}\sum_{a_2 \; \textrm{mod } q}  S\left( \overline{-a_2}, \mp m; \frac{q}{n}\right)e\left( \frac{\overline{4a_2}Q^\star (m_1, m_2)}{q}\right)e\left(\frac{a_2 n_3^{\prime k}}{q}\right).
\end{multline}
Using the change of variable $m \mapsto mq/n + j$ with $0 \leqslant j < q/n$, then applying Poisson to the $m$-sum, we arrive at
\begin{multline*}
    \Omega = \sup_{K_1 \ll K/n^2} \sum_{n_3 \leqslant Y} \sum_{n_3^\prime \leqslant Y} \mathsf{a}(n_3)\mathsf{a}(n_3^\prime)\sum_{m \in \mathbb{Z}} \sum_{j \;mod \;q/n} \mathfrak{C}_2(...)\overline{\mathfrak{C}_2}(...) \\ \times \int_{\mathbb{R}} W_3\left(\frac{wq/n + j}{K_1}\right)  \mathcal{L}^\pm(...) \overline{\mathcal{L}^\pm}(...) e(-m w) dw.
\end{multline*}
Applying the change of variable $\frac{wq/n + j}{K_1} \mapsto w$, we get
\begin{multline*}
    \Omega = \sup_{K_1 \ll K/n^2} \frac{K_1}{q/n} \sum_{n_3 \leqslant Y} \sum_{n_3^\prime \leqslant Y} \mathsf{a}(n_3)\mathsf{a}(n_3^\prime) \sum_{m \in \mathbb{Z}} \sum_{j \;mod \;q/n} e\left(\frac{m j}{q/n}\right) \mathfrak{C}_2(.,\pm j,.)\overline{\mathfrak{C}_2}(.,\mp j,.)\\ \times \int_{\mathbb{R}} W_3(w) \mathcal{L}^\pm(..,wK_1,.) \overline{\mathcal{L}^\pm}(..,wK_1,.) e\left(-\frac{ mn wK_1}{ q}\right) dw.
\end{multline*}
Define
\begin{equation} \label{character sum after cauchy}
    \mathfrak{S}: = \frac{1}{q/n} \sum_{j \;mod \;q/n} e\left(\frac{m j}{q/n}\right) \mathfrak{C}_2(.,\pm j,.)\overline{\mathfrak{C}_2}(.,\mp j,.),
\end{equation}
and 
\begin{equation} \label{zintegral}
    \mathfrak{Z} := \int_{\mathbb{R}} W_3(w) \mathcal{L}^\pm(..,wK_1,.) \overline{\mathcal{L}^\pm}(..,wK_1,.) e\left(-\frac{ mn wK_1}{ q}\right) dw. 
\end{equation}
The expression for $\Omega$ becomes
\begin{equation} \label{omega final}
    \Omega = \sup_{K_1 \ll K/n^2} K_1 \sum_{n_3 \leqslant Y} \sum_{n_3^\prime \leqslant Y} \mathsf{a}(n_3)\mathsf{a}(n_3^\prime)\sum_{m \in \mathbb{Z}} \mathfrak{S} \mathfrak{Z}.
\end{equation}
Here $\mathcal{L}^\pm(...)= \mathcal{L}^\pm(m_1,m_2,n,wK_1,n_3,q)$ which is given by
\begin{multline*}
   \mathcal{L}^\pm(...) = \frac{-2\pi^{5/2}}{\sqrt{3}} \int_{\mathbb{R}} \psi(q,u) U(u) \int_{0}^{\infty} V^{\pm}(z) z^{-1/3} e\Biggl( \frac{X zu}{q \mathcal{Q}} \pm \frac{3(X z n^2wK_1)^{1/3}}{q}\Biggr) dz \\  
     \times \iint_{\mathbb{R}^2} W_1(v_1) W_2(v_2) e\Biggl( \frac{- m_1 v_1 X^{1/2} -m_2 v_2 X^{1/2} }{q} \Biggr) e\Biggl(\frac{-uQ(v_1 X^{1/2}, v_2X^{1/2})}{q \mathcal{Q}}\Biggr) dv_1 dv_2 \\ \times e\left( \frac{-un_3^k}{q}\right)du.
\end{multline*}
Differentiating the above integral $j$ times with respect to $w$, we get
\begin{equation*}
    \frac{\partial^j}{\partial w^j} \mathcal{L}^\pm(...) \ll \left(\frac{(X n^2K_1)^{1/3}}{q} \right)^j \ll \left(\frac{(X K)^{1/3}}{q} \right)^j \ll \mathcal{Q}^{j\varepsilon},
\end{equation*}
applying integration by parts repeatedly, the integral $\mathfrak{Z}$ is negligibly small if 
\begin{equation} \label{M}
    m \gg  \frac{ q}{n K_1} := M.
\end{equation}
Using Lemma \ref{Lintegral}, we established the following bound.
\begin{lemma} \label{Zintegral}
Let $\mathfrak{Z}$ be as given in equation \eqref{zintegral}. Then 
\begin{equation*}
    \mathfrak{Z} \ll \frac{q^2}{\mathcal{Q}^2}.
\end{equation*}
\end{lemma}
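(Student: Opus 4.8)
The plan is to read the estimate off directly from Lemma~\ref{Lintegral} together with the trivial bound for the $w$-integral. Recall from \eqref{zintegral} that
\[
\mathfrak{Z} = \int_{\mathbb{R}} W(w)\, \mathcal{L}^\pm\!\left(m_1,m_2,n,\tfrac{Kw}{n^2},n_3,q\right) \overline{\mathcal{L}^\pm}\!\left(m_1,m_2,n,\tfrac{Kw}{n^2},n_3,q\right) e\!\left(-\frac{Kmw}{nq}\right) dw ,
\]
where $W$ is a fixed smooth bump function supported on a bounded (dyadic) interval, so that $w \asymp 1$ throughout the integration. First I would apply Lemma~\ref{Lintegral} with the fourth argument taken to be $wK/n^2$; since the bound $\mathcal{L}^\pm(m_1,m_2,n,m,n_3,q) \ll q^{3/2}/\mathcal{Q}^{3/2}$ there is uniform in the suppressed parameters and in particular does not see the precise value of that argument, it applies verbatim on the support of $W$.

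Next I would simply take absolute values inside the integral. Using $|e(-Kmw/(nq))| = 1$, the product bound $|\mathcal{L}^\pm\,\overline{\mathcal{L}^\pm}| \ll (q^{3/2}/\mathcal{Q}^{3/2})^2 = q^3/\mathcal{Q}^3$, and the fact that $W$ is supported on an interval of length $O(1)$, one obtains at once $\mathfrak{Z} \ll q^3/\mathcal{Q}^3$, which is the claim. The complementary fact recorded in \eqref{M}, namely that $\mathfrak{Z}$ is negligibly small unless $m \ll M := nq/K$ (proved by repeated integration by parts in $w$, using $\partial_w^j \mathcal{L}^\pm \ll \mathcal{Q}^{j\varepsilon}$), is what will later truncate the $m$-sum in \eqref{omega final}; it is not required for the size estimate of $\mathfrak{Z}$ itself.

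I do not expect any genuine obstacle here: Lemma~\ref{Zintegral} is essentially bookkeeping, repackaging Lemma~\ref{Lintegral} into a clean pointwise bound on $\mathfrak{Z}$ that can be fed into the expression \eqref{omega final} for $\Omega$. The only point one must check is that the substitution of $wK/n^2$ for the oscillation parameter keeps us inside the regime in which Lemma~\ref{Lintegral} was established (i.e.\ $w$ of size one), and this is immediate from the compact support of $W$; should one wish for a sharper bound, the oscillatory factor $e(-Kmw/(nq))$ could be exploited, but for the purposes of the final estimate the trivial bound above suffices.
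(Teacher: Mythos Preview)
Your proposal is correct and matches the paper's own argument: the paper simply states that Lemma~\ref{Zintegral} follows from Lemma~\ref{Lintegral}, and your write-up makes this explicit by squaring the bound $\mathcal{L}^\pm \ll q^{3/2}/\mathcal{Q}^{3/2}$ and integrating trivially over the compact support of $W$. Your remark that the truncation $m \ll M$ from \eqref{M} is a separate fact not needed for the size bound is also accurate.
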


\subsection{Character Sum} \label{character sum}
In this subsection, we will estimate the character sum $\mathfrak{S}$ as given in equation \eqref{character sum after cauchy}. We will consider two cases separately: when $m=0$ and when $m\neq 0$. \\

\smallskip
\textit{Case I (Zero Frequency):} We shall first estimate the character sum in the case of zero frequency $ m=0$. Let $  \mathfrak{S}_0$ denote the contribution to the character sum in this case.  Taking $ m=0$ in equation \eqref{character sum after cauchy}, we obtain
\begin{multline*}
    \mathfrak{S}_0 = \frac{1}{q/n}  q^2 \sum_{j \;\textrm{mod }q/n}  \ \sideset{}{^\star}\sum_{a_1 \; \textrm{mod } q} S\left( \overline{a_1}, \pm j; \frac{q}{n}\right) e\left( \frac{-\overline{4a_1}Q^\star (m_1, m_2) }{q}\right)e\left( \frac{-a_1 n_3^k}{q}\right) \\
      \qquad \qquad \qquad \times  \;  \sideset{}{^\star}\sum_{a_2 \; \textrm{mod } q}  S\left( -\overline{a_2}, \mp j; \frac{q}{n}\right)e\left( \frac{\overline{4a_2}Q^\star (m_1, m_2)}{q}\right)e\left(\frac{a_2 n_3^{\prime k}}{q}\right)
\end{multline*}
\begin{multline*}
   \hspace{0.2cm} = n q \sum_{j \; \textrm{mod } q/n}\; \sideset{}{^\star}\sum_{a_1 \; \textrm{mod } q}\; \; \sideset{}{^\star}\sum_{\alpha_1 \; \textrm{mod } q/n} e\left( \frac{\overline{a_1}\alpha_1 \pm j \overline{\alpha_1}}{q/n}\right) e\left( \frac{-\overline{4a_1}Q^\star (m_1, m_2) }{q}\right)e\left( \frac{-a_1 n_3^k}{q}\right) \\ \times
    \sideset{}{^\star}\sum_{a_2 \; \textrm{mod } q} \; \;\sideset{}{^\star}\sum_{\alpha_2 \; \textrm{mod } q/n} e\left( \frac{-\overline{a_2}\alpha_2 \mp j \overline{\alpha_2}}{q/n}\right) e\left( \frac{\overline{4a_2}Q^\star (m_1, m_2) }{q}\right)e\left( \frac{a_2 n_3^{\prime k}}{q}\right) 
\end{multline*}
\begin{multline*}
        = n q \; \sideset{}{^\star}\sum_{\alpha_1 \; \textrm{mod } q/n} \; \sideset{}{^\star}\sum_{\alpha_2 \; \textrm{mod } q/n} \sum_{j \; \textrm{mod } q/n} e\left( \frac{j (\pm \overline{\alpha_1} \mp \overline{\alpha_2}) }{q/n}\right) \\ \sideset{}{^\star}\sum_{a_1 \; \textrm{mod } q} \; \sideset{}{^\star}\sum_{a_2 \; \textrm{mod } q} e\left( \frac{\overline{a_1}\alpha_1 - \overline{a_2} \alpha_2}{q/n} \right) e\left( \frac{\overline{4}Q^\star (m_1, m_2) (\overline{a_2} - \overline{a_1})}{q}\right) e\left( \frac{n_3^{\prime k}a_2 - n_3^k a_1}{q}\right)
\end{multline*}
\begin{multline*}
    = q^2 \; \mathop{\sideset{}{^\star}\sum_{\alpha_1 \; \textrm{mod } q/n} \; \sideset{}{^\star}\sum_{\alpha_2 \; \textrm{mod } q/n}}_{ \pm \overline{\alpha_1} \mp \overline{\alpha_2} \equiv 0 \; \textrm{mod } q/n} \; \sideset{}{^\star}\sum_{a_1 \; \textrm{mod } q} \; \sideset{}{^\star}\sum_{a_2 \; \textrm{mod } q} e\left( \frac{\overline{a_1}\alpha_1 - \overline{a_2} \alpha_2}{q/n} \right) \\ e\left( \frac{\overline{4}Q^\star (m_1, m_2)(\overline{a_2} - \overline{a_1})}{q}\right) e\left( \frac{n_3^{\prime k}a_2 - n_3^k a_1}{q}\right).
\end{multline*}
From the above congruence condition, we obtain that $\alpha_1 = \alpha_2$. For simplicity, we can take $n=1$.
\begin{multline*}
    \mathfrak{S}_0 = q^2 \; \sideset{}{^\star}\sum_{a_1 \; \textrm{mod } q} \; \sideset{}{^\star}\sum_{a_2 \; \textrm{mod } q} \; \sideset{}{^\star}\sum_{\alpha_1 \; \textrm{mod } q} e\left( \frac{\alpha_1(\overline{a_1} - \overline{a_2})}{q}\right) e\left( \frac{\overline{4}Q^\star (m_1, m_2)(\overline{a_2} - \overline{a_1})}{q}\right) \\ \times e\left( \frac{n_3^{\prime k}a_2 - n_3^k a_1}{q}\right)
\end{multline*}
\begin{multline*}
    \hspace{0.4cm}= q^2 \; \sideset{}{^\star}\sum_{a_1 \; \textrm{mod } q} \; \sideset{}{^\star}\sum_{a_2 \; \textrm{mod } q} e\left( \frac{\overline{4}Q^\star (m_1, m_2)(\overline{a_2} - \overline{a_1})}{q}\right) e\left( \frac{n_3^{\prime k}a_2 - n_3^k a_1}{q}\right) \\ \times \sum_{d \mid (q,\overline{a_1}-\overline{a_2})} d\mu\left(\frac{q}{d}\right).
\end{multline*}
\noindent{\textit{Sub-case}(i) $n_3 = n_3^\prime$.} 
\begin{multline*}
   \mathfrak{S}_0 = q^2 \sideset{}{^\star}\sum_{a_1 \; \textrm{mod } q} \; \sideset{}{^\star}\sum_{a_2 \; \textrm{mod } q} e\left( \frac{\overline{4}Q^\star (m_1, m_2)(\overline{a_2} - \overline{a_1})}{q}\right) e\left( \frac{n_3^{k}(a_2 - a_1)}{q}\right) \\ \times \sum_{d \mid (q,\overline{a_1}-\overline{a_2})} d\mu\left(\frac{q}{d}\right)
\end{multline*}
\begin{equation*}
    \hspace{-2.7cm} \ll q^3 S( -\overline{4}Q^\star (m_1, m_2),-n_3^k;q) S(\overline{4}Q^\star (m_1, m_2),n_3^k;q) \ll q^4.
\end{equation*}

\smallskip
\noindent{\textit{Sub-case}(ii) $n_3 \neq n_3^\prime$.} We can write $q$ as $q=q_1q_2$, where $q_1$ is square full, $\mu(q_2)\neq 0$ and $(q_1,q_2)=1$. Also, write $a_1 = b_1q_2\overline{q_2} + c_1 q_1 \overline{q_1}$ and $a_2 = b_2q_2\overline{q_2} + c_2 q_1\overline{q_1}$, then we have
\begin{equation*}
    \mathfrak{S}_0(q) = \mathfrak{S}_0(q_1)\cdot \mathfrak{S}_0(q_2), 
\end{equation*}
where
\begin{multline*}
    \mathfrak{S}_0(q_1) = q_1^2 \sideset{}{^\star}\sum_{b_1 \; \textrm{mod } q_1} \ \sideset{}{^\star}\sum_{b_2 \; \textrm{mod } q_1} e\left( \frac{\overline{4q_2}Q^\star (m_1, m_2)(\overline{b_2} - \overline{b_1})}{q_1}\right) e\left( \frac{\overline{q_2}(n_3^{\prime k}b_2 - n_3^k b_1)}{q_1}\right) \\ \times \sum_{\substack{d_1 \mid q_1 \\ \overline{b_1} \equiv \overline{b_2}\  mod \ d_1}} d_1 \mu\left(\frac{q_1}{d_1}\right)
\end{multline*}
\begin{equation*}    
    \hspace{-0.6cm}\ll q_1^3 S( -\overline{4q_2}Q^\star (m_1, m_2),-n_3^k;q) S(\overline{4q_2}Q^\star (m_1, m_2),n_3^k;q) \ll q_1^4, 
\end{equation*} 
and 
\begin{multline*}
    \mathfrak{S}_0(q_2) = q_2^2 \sideset{}{^\star}\sum_{c_1 \; \textrm{mod } q_2} \ \sideset{}{^\star}\sum_{c_2 \; \textrm{mod } q_2} e\left( \frac{\overline{4q_1}Q^\star (m_1, m_2)(\overline{c_2} - \overline{c_1})}{q_2}\right) e\left( \frac{\overline{q_1}(n_3^{\prime k}c_2 - n_3^k c_1)}{q_2}\right) \\ \times \sum_{\substack{d_2 \mid q_2 \\ \overline{c_1} \equiv \overline{c_2}\  mod \ d_2}} d_2 \mu\left(\frac{q_2}{d_2}\right).
\end{multline*}
Without loss of generality, we can assume $q_2$ to be prime then $\mathfrak{S}_0(q_2)$ is bounded by
\begin{equation*}
    \ll q_2^3 \sideset{}{^\star}\sum_{c_1 \; \textrm{mod } q_2} e\left( \frac{\overline{q_1}c_1(n_3^{\prime k} - n_3^k)}{q_2}\right) \ll q_2^3(n_3^{\prime k} - n_3^k,q_2).
\end{equation*}
We record the preceding analysis in the following lemma. 
\begin{lemma} \label{char sum zero freq}
    The character sum given as in equation \eqref{character sum after cauchy} for the case $m=0$ is bounded by
    \begin{equation*}
        \mathfrak{S}_{0} \ll \begin{cases}
            q^4/n, & \text{if } n_3 = n_3^\prime\\
            q^3q_1(n_3^{\prime k} - n_3^k, q_2)/n , & \ \text{otherwise} 
        \end{cases}, 
    \end{equation*}
where $q=q_1q_2$, $q_1$ is square full, $\mu(q_2)\neq 0$ and $(q_1,q_2)=1$.    
\end{lemma}
    
\smallskip
\textit{ Case II (Non-zero frequencies):}  In this case, we denote $\mathfrak{S}_{\neq 0}$ as the contribution of character sum given in equation \eqref{character sum after cauchy} when $m\neq 0$. We have 
\begin{multline*}
    \mathfrak{S}_{\neq 0}(q) = q n \hspace{-0.3cm}\sum_{j \;mod \;q/n}  e\left( \frac{m j}{q/n}\right) \;  \sideset{}{^\star}\sum_{a_1 \; \textrm{mod } q} S\left( \overline{a_1}, \pm j; \frac{q}{n}\right) e\left( \frac{-\overline{4a_1}Q^\star (m_1, m_2)}{q}\right)e\left( \frac{-a_1 n_3^k}{q}\right) \\
      \qquad \qquad \qquad \times  \;  \sideset{}{^\star}\sum_{a_2 \; \textrm{mod } q}  S\left( -\overline{a_2}, \mp j; \frac{q}{n}\right)e\left( \frac{\overline{4a_2}Q^\star (m_1, m_2) }{q}\right)e\left(\frac{a_2 n_3^{\prime k}}{q}\right).
\end{multline*}
We write $q$ as $q= q_1 q_2 q_3$ where $n \mid q_1 q_2 \mid n^{\infty} $ and $(q_3,n)=1$. The above sum can be written as follows
\begin{equation} \label{sum split in q1q2q3}
    \mathfrak{S}_{\neq 0}(q) = \mathfrak{S}_{\neq 0}(q_1 q_2) \cdot \mathfrak{S}_{\neq 0}(q_3),
\end{equation}
where
\begin{multline*}
   \mathfrak{S}_{\neq 0}(q_1q_2) = q_1 q_2 n \hspace{-0.4cm}\sum_{j_1 \;mod \;q_1 q_2/n} e\left( \frac{m j_1 \overline{q_3}}{q_1 q_2/n}\right) \\ \times \sideset{}{^\star}\sum_{a_1^{\prime} \; \textrm{mod } q_1 q_2} \; \sideset{}{^\star}\sum_{\alpha_1 \; \textrm{mod } q_1 q_2/n} e\left( \frac{\overline{q_3}(\overline{a_1^\prime}\alpha_1 \pm j_1 \overline{\alpha_1})}{q_1 q_2/n} \right) e\left( \frac{-\overline{4a_1^\prime} \overline{q_3}Q^\star (m_1, m_2) }{q_1 q_2}\right)e\left( \frac{-a_1^\prime \overline{q_3} n_3^k}{q_1 q_2}\right) \\ \times \sideset{}{^\star}\sum_{a_2^\prime \; \textrm{mod } q_1q_2} \; \ \sideset{}{^\star}\sum_{\alpha_2 \; \textrm{mod } q_1 q_2/n} e\left( \frac{\overline{q_3}(-\overline{a_2^\prime}\alpha_2 \mp j_1 \overline{\alpha_2})}{q_1 q_2/n}\right) e\left( \frac{\overline{4a_2^\prime} \overline{q_3}Q^\star (m_1, m_2)}{q_1q_2}\right)e\left(\frac{a_2^\prime \overline{q_3} n_3^{\prime k}}{q_1q_2}\right)
\end{multline*}
\begin{multline*}
   = q_1 q_2 n \sideset{}{^\star}\sum_{a_1^\prime \; \textrm{mod } q_1q_2} \; \sideset{}{^\star}\sum_{a_2^\prime \; \textrm{mod } q_1q_2} \; e\left( \frac{\overline{q_3}(a_2^\prime  n_3^{\prime k} - a_1^\prime n_3^k)}{q_1q_2}\right) e\left( \frac{\overline{4} \overline{q_3}Q^\star (m_1, m_2)( \overline{a_2^\prime} - \overline{a_1^\prime})}{q_1q_2}\right) \\ \times \sideset{}{^\star}\sum_{\alpha_1 \; \textrm{mod } q_1q_2/n} \; \sideset{}{^\star}\sum_{\alpha_2 \; \textrm{mod } q_1q_2/n} e\left( \frac{\overline{q_3}(\overline{a_1^\prime} \alpha_1 - \overline{a_2^\prime} \alpha_2)}{q_1q_2/n}\right) \\ \times \sum_{j_1 \;mod \;q_1q_2/n} e\left( \frac{\overline{q_3}j_1( m \pm \overline{\alpha_1} \mp \overline{\alpha_2})}{q_1q_2/n}\right)
\end{multline*}
\begin{multline*}
    = (q_1q_2)^2 \sideset{}{^\star}\sum_{a_1^\prime \; \textrm{mod } q_1q_2} \; \sideset{}{^\star}\sum_{a_2^\prime \; \textrm{mod } q_1q_2} \; e\left( \frac{\overline{q_3}(a_2^\prime n_3^{\prime k} - a_1^\prime n_3^k)}{q_1q_2}\right) e\left( \frac{\overline{4} \overline{q_3}Q^\star (m_1, m_2)( \overline{a_2^\prime} - \overline{a_1^\prime})}{q_1q_2}\right) \\ \times \mathop{\sideset{}{^\star}\sum_{\alpha_1 \; \textrm{mod } q_1q_2/n}\; \; \sideset{}{^\star}\sum_{\alpha_2 \; \textrm{mod } q_1q_2/n}}_{\pm \overline{\alpha_1} \mp \overline{\alpha_2} \equiv -m \; \textrm{mod } q_1q_2/n}  e\left( \frac{\overline{q_3}(\overline{a_1^\prime} \alpha_1 - \overline{a_2^\prime} \alpha_2)}{q_1q_2/n}\right)
\end{multline*}
\begin{multline*}
    = (q_1q_2)^2 \sideset{}{^\star}\sum_{a_1^\prime \; \textrm{mod } q_1q_2} \; \sideset{}{^\star}\sum_{a_2^\prime \; \textrm{mod } q_1q_2} \; e\left( \frac{\overline{q_3}(a_2^\prime n_3^{\prime k} - a_1^\prime n_3^k)}{q_1q_2}\right) e\left( \frac{\overline{4} \overline{q_3}Q^\star (m_1, m_2)( \overline{a_2^\prime} - \overline{a_1^\prime})}{q_1q_2}\right) \\ \times \sideset{}{^\star}\sum_{\alpha_1 \; \textrm{mod } q_1q_2/n} e\left( \frac{\overline{q_3}(\overline{a_1^\prime} \alpha_1 - \overline{a_2^\prime}(\overline{\overline{\alpha_1} +m}))}{q_1q_2/n}\right)
\end{multline*}
\begin{multline*}
    = (q_1 q_2)^2 \sideset{}{^\star}\sum_{\alpha_1 \; \textrm{mod } q_1q_2/n} S\left(-\overline{q_3} n_3^k, \overline{q_3}(\alpha_1 - \overline{4} Q^\star (m_1, m_2)); q_1q_2\right) \\ \times S\left(\overline{q_3} n_3^{\prime k}, -\overline{q_3}((\overline{\overline{\alpha_1} +m}) - \overline{4}Q^\star (m_1, m_2)); q_1q_2\right).
\end{multline*}
Using Weil's bound for Kloosterman sums and executing $ \alpha_1$ sum trivially, we obtain that
\begin{equation} \label{sum with mod q1q2}
    \mathfrak{S}_{\neq 0}(q_1q_2) \ll \frac{(q_1 q_2)^4}{n}.
\end{equation}
Next, we will bound the other sum in equation \eqref{sum split in q1q2q3}, which is given as 
\begin{align} \label{sum with mod q3}
    \mathfrak{S}_{\neq 0}(q_3) &= q_3 \sum_{j_2 \; \textrm{mod } q_3} e\left( \frac{m j_2\overline{(q_1 q_2/n)}}{q_3}\right) \notag \\ 
    &\times \sideset{}{^\star}\sum_{a_1\dblprime \; \textrm{mod } q_3} S\left(\overline{a_1\dblprime} \overline{\frac{q_1q_2}{n}}, \pm j_2 \overline{\frac{q_1q_2}{n}}; q_3 \right) e\left(  \frac{-\overline{4} \overline{a_1\dblprime} \overline{q_1q_2}Q^\star (m_1, m_2)}{q_3}\right)  e\left( \frac{-a_1\dblprime \overline{q_1q_2} n_3^k}{q_3}\right) \notag \\  &\times \sideset{}{^\star}\sum_{a_2\dblprime \; \textrm{mod } q_3} S\left( -\overline{a_2\dblprime} \overline{\frac{q_1q_2}{n}}, \mp j_2 \overline{\frac{q_1q_2}{n}}; q_3 \right) e\left( \frac{\overline{4} \overline{a_2\dblprime} \overline{q_1q_2} Q^\star (m_1, m_2)}{q_3}\right) e\left( \frac{a_2\dblprime \overline{q_1q_2} n_3^{\prime k}}{q_3}\right).
\end{align} 
To proceed further, we split $q_3$ as $q_3 = q_3^\prime q_3\dblprime$, where $\mu(q_3^\prime) \neq 0$, $q_3\dblprime$ is square full, and $(q_3^\prime, q_3\dblprime) = 1$. Now, the sum $\mathfrak{S}_{\neq 0}(q_3)$ can be expressed as
\begin{equation} \label{sum split in q3}
    \mathfrak{S}_{\neq 0}(q_3) = \mathfrak{S}_{\neq 0}(q_3^\prime) \cdot \mathfrak{S}_{\neq 0}(q_3\dblprime).
\end{equation}
Using Weil's bound and a similar approach as before, the sum with moduli $q_3\dblprime$ can be straightforwardly bounded. Specifically, we attain the following
\begin{equation}
    \mathfrak{S}_{\neq 0}(q_3\dblprime) \ll {q_3\dblprime}^4.
\end{equation}
At this point, the only remaining sum to be addressed is $\mathfrak{S}_{\neq 0}(q_3^\prime)$. Since $q_3^\prime$ is a product of primes, it becomes feasible to treat it as a prime itself. 
\begin{multline*}
    \mathfrak{S}_{\neq 0}(q_3^\prime) = q_3^\prime \sideset{}{^\star}\sum_{a_1\dblprime \; \textrm{mod } q_3^\prime} \ \sideset{}{^\star}\sum_{\beta_1 \; \textrm{mod } q_3^\prime}  e\left(  \frac{-\overline{4 a_1\dblprime q_1q_2}Q^\star (m_1, m_2)}{q_3^\prime}\right) e\left( \frac{-a_1\dblprime \overline{q_1q_2} n_3^k}{q_3^\prime}\right)  \\ \hspace{9cm}\times  e\left( \frac{\overline{a_1\dblprime} \overline{(q_1q_2/n)} \beta_1}{q_3^\prime}\right) \\ \times \sideset{}{^\star}\sum_{a_2\dblprime \; mod \;q_3^\prime}  \; \; \sideset{}{^\star}\sum_{\beta_2 \; mod \;q_3^\prime} e\left( \frac{-\overline{a_2\dblprime} \overline{(q_1q_2/n)} \beta_2}{q_3^\prime}\right) e\left( \frac{\overline{4} \overline{a_2\dblprime} \overline{q_1q_2} Q^\star (m_1, m_2) }{q_3^\prime}\right) e\left( \frac{a_2\dblprime \overline{q_1q_2} n_3^{\prime k}}{q_3^\prime}\right) \\ \hspace{0.9cm}\times \sum_{j_2 \; \textrm{mod } q_3^\prime} e\left( \frac{j_2\overline{(q_1 q_2/n)} ( m \pm  \overline{\beta_1} \mp \overline{\beta_2} )}{q_3^\prime}\right).
\end{multline*}
Executing the sum over $j_2$, then determining $\beta_2$ in terms of $\beta_1$ using the congruence $\pm \overline{\beta_1} \mp \overline{\beta_2} \equiv -m \; \textrm{mod } q_3^\prime$, we arrive at
\begin{multline*}
    = q_3^{\prime 2} \sideset{}{^\star}\sum_{a_1\dblprime \; \textrm{mod } q_3^\prime} \; \sideset{}{^\star}\sum_{a_2\dblprime \; mod \;q_3^\prime} e\left(  \frac{-\overline{4} \overline{a_1\dblprime} \overline{q_1q_2}Q^\star (m_1, m_2) }{q_3^\prime}\right)  e\left( \frac{-a_1\dblprime \overline{q_1q_2} n_3^k}{q_3^\prime}\right)  e\left( \frac{a_2\dblprime \overline{q_1q_2} n_3^{\prime k}}{q_3^\prime}\right) \\ \times e\left( \frac{\overline{4} \overline{a_2\dblprime} \overline{q_1q_2} Q^\star (m_1, m_2) }{q_3^\prime}\right) \sideset{}{^\star}\sum_{\beta_1 \; \textrm{mod } q_3^\prime} e\left( \frac{\overline{(q_1q_2/n)}( \overline{a_1\dblprime} \beta_1 - \overline{a_2\dblprime} (\overline{\overline{\beta_1} \pm m })}{q_3^\prime} \right),
\end{multline*}
Substituting  the following change of variable we obtain
\begin{equation*}
    \beta = \pm m \beta_1 + 1 \Rightarrow \beta -1 = \pm m \beta_1 \Rightarrow \beta_1 = \pm \overline{m}(\beta - 1),
\end{equation*}
we get
\begin{equation*}
    \overline{\beta_1} \pm m = \overline{\beta_1}( 1 \pm m \beta_1) = \pm m (\overline{\beta - 1})\beta,
\end{equation*}
and hence
\begin{equation*}
    -(\overline{\overline{\beta_1} \pm m}) = \mp \overline{\beta m}(\beta - 1) = \pm \overline{m}(\overline{\beta} - 1).
\end{equation*}
Thus, we reached at
\begin{multline} \label{q3prime after variable change}
    \mathfrak{S}_{\neq 0}(q_3^\prime) = q_3^{\prime 2} \sideset{}{^\star}\sum_{a_1\dblprime \; \textrm{mod } q_3^\prime} \; e\left(  \frac{-\overline{4} \overline{a_1\dblprime} \overline{q_1q_2}Q^\star (m_1, m_2)}{q_3^\prime}\right)  e\left( \frac{-a_1\dblprime \overline{q_1q_2} n_3^k}{q_3^\prime}\right)  \\ \times \sideset{}{^\star}\sum_{a_2\dblprime \; mod \;q_3^\prime} e\left( \frac{\overline{4} \overline{a_2\dblprime} \overline{q_1q_2} Q^\star (m_1, m_2) }{q_3^\prime}\right) e\left( \frac{a_2\dblprime \overline{q_1q_2} n_3^{\prime k}}{q_3^\prime}\right) \\ \times \sideset{}{^\star}\sum_{\beta \; \textrm{mod } q_3^\prime} e\left( \frac{ \pm \overline{m} \overline{(q_1q_2/n)}\left( \overline{a_1\dblprime} (\beta -1) + \overline{a_2\dblprime} (\overline{\beta} - 1) \right) }{q_3^\prime} \right).
\end{multline}
To estimate the above sum, let $c_1 = -\overline{q_1q_2} n_3^k$, $c_2 = -\overline{4} \overline{q_1q_2} Q^\star (m_1, m_2) \mp \overline{m} (\overline{q_1q_2/n})$, $c_3 = \overline{q_1q_2} n_3^{\prime k}$, $c_4 = \overline{4} \overline{q_1q_2} Q^\star (m_1, m_2) \mp \overline{m} ( \overline{q_1q_2/n})$ and $c_5 = \pm \overline{m}(\overline{q_1q_2/n})$. The above sum can be reformulated as
\begin{eqnarray} \label{q3prime before matrix}
   \mathfrak{S}_{\neq 0}(q_3^\prime)/ q_3^{\prime 2} &=& \sideset{}{^\star}\sum_{\beta \; \textrm{mod } q_3^\prime} S\left( c_1, c_2 + c_5\beta ;q_3^\prime \right) S\left( c_3, c_4 + c_5 \overline{\beta}; q_3^\prime \right) \notag\\
    &=& \sideset{}{^\star}\sum_{\beta \; \textrm{mod } q_3^\prime} S\left( 1, c_1(c_2 + c_5\beta) ;q_3^\prime \right) \overline{S}\left( 1, c_3(c_4 + c_5 \overline{\beta}); q_3^\prime \right),
\end{eqnarray}
if $(n_3^k n_3^{\prime k}, q_3^\prime) = 1$. Character sums of this nature have been explored in \cite{DF} and \cite{FKM} employing the $\ell$-adic techniques developed by Deligne and Katz. By incorporating their ideas, square root cancellation can be achieved in the sum \eqref{q3prime before matrix}. However, here we are showcasing ideas developed by D\k{a}browski and Fisher. Consider the linear fractional transformations
\begin{equation*}
    \gamma_1 = \begin{pmatrix} 
        c_1c_5 & c_1c_2 \\
        0 & 1
    \end{pmatrix} \qquad  \text{and} \qquad \gamma_2 = \begin{pmatrix}
        c_3 c_4 & c_3 c_5 \\
        1 & 0
    \end{pmatrix},
\end{equation*}
we can recast equation \eqref{q3prime before matrix} as
\begin{eqnarray*}
    \mathfrak{S}_{\neq 0}(q_3^\prime)/ q_3^{\prime 2} &=& \sideset{}{^\star}\sum_{\beta \; \textrm{mod } q_3^\prime} S(1, \gamma_1(\beta); q_3^\prime) \overline{S}(1, \gamma_2(\beta); q_3^\prime) \\
    &=& \sideset{}{^\star}\sum_{\beta \; \textrm{mod } q_3^\prime} S(1, \beta; q_3^\prime) \overline{S}(1, \gamma_2\gamma_1^{-1}(\beta); q_3^\prime),
\end{eqnarray*}
since $\text{det}(\gamma_i) \neq 0 \; \textrm{mod } q_3^\prime$. Propositions 3.4 in \cite{DF} implies the following. Given $\gamma = \begin{pmatrix}
    a & b \\ c & d
\end{pmatrix}$ such that $ad-bc \neq 0 \; mod \; p$, then 
\begin{equation*}
    \sideset{}{^\star}\sum_{\alpha \; mod \; p} S(1, \alpha ; p) \overline{S}(1, \gamma(\alpha) ; p) \ll p^{3/2}, 
\end{equation*}
where the implied constant depends on linear fractional transformation $\gamma$. In the present case, we have
\begin{equation*}
    \gamma_2 \gamma_1^{-1} = \frac{1}{c_1c_5}\begin{pmatrix}
        c_3c_4 & c_1c_3(c_5^2 - c_2c_4) \\
        1 & - c_1 c_2
    \end{pmatrix}.
\end{equation*}
Hence, we obtain our desired bound
\begin{equation} \label{bound of q3prime1}
    \mathfrak{S}_{\neq 0}(q_3^\prime) \ll q_3^{\prime \; 7/2}, \quad \text{if} \ (n_3^k n_3^{\prime k}m , q_3^\prime)=1.
\end{equation}
If $n_3^k \equiv 0 \; \textrm{mod } q_3^\prime$ and $n_3^{ \prime k} \not\equiv 0 \; \textrm{mod } q_3^\prime$, equation \eqref{q3prime after variable change} becomes 
\begin{align*}
    \mathfrak{S}_{\neq 0}(q_3^\prime) &= q_3^{\prime 2} \sideset{}{^\star}\sum_{\beta \; \textrm{mod } q_3^\prime} S\left( c_3, c_4 + c_5\overline{\beta}; q_3^\prime \right) \sideset{}{^\star}\sum_{a_1\dblprime \; \textrm{mod } q_3^\prime} e\left( \frac{\overline{a_1\dblprime}( c_2 +c_5\beta)}{q_3^\prime}\right) \\
    &\ll q_3^{\prime 3} \sideset{}{^\star}\sum_{\substack{\beta \; \textrm{mod } q_3^\prime \\ c_2 +c_5\beta \equiv 0 \ \textrm{mod } q_3^\prime}} S\left( c_3, c_4 + c_5\overline{\beta}; q_3^\prime \right),
\end{align*}
from the congruence, $\beta$ can be determined modulo $q_3^\prime$. Finally, using Weil's bound for Kloosterman sum, we have
\begin{equation}
    \mathfrak{S}_{\neq 0}(q_3^\prime) \ll q_3^{\prime 7/2}.
\end{equation}
An equivalent bound can be derived for the other case.  Now, if $n_3^k$ and $n_3^{\prime k} \equiv 0 \; \textrm{mod } q_3^\prime$, the sum reduces to
\begin{align*}
    \mathfrak{S}_{\neq 0}(q_3^\prime) &= q_3^{\prime 2} \sideset{}{^\star}\sum_{\beta \; \textrm{mod } q_3^\prime} \ \sideset{}{^\star}\sum_{a_1\dblprime \; \textrm{mod } q_3^\prime} e\left(\frac{\overline{a_1\dblprime}(c_2 + c_5\beta)}{q_3^\prime}\right) \sideset{}{^\star}\sum_{a_2\dblprime \; \textrm{mod } q_3^\prime} e\left(\frac{\overline{a_2\dblprime}(c_4 + c_5\overline{\beta})}{q_3^\prime}\right)\\
    &\ll q_3^{\prime 3} \sideset{}{^\star}\sum_{\substack{\beta \; \textrm{mod } q_3^\prime \\ c_4 + c_5\overline{\beta}\equiv 0 \ \textrm{mod } q_3^\prime}} \ \sideset{}{^\star}\sum_{a_1\dblprime \; \textrm{mod } q_3^\prime} e\left(\frac{\overline{a_1\dblprime}(c_2 + c_5\beta)}{q_3^\prime}\right) \\
    &\ll q_3^{\prime 3} \sum_{d \mid (q_3^\prime, c_2 - c_5^2 \overline{c_4})} d \mu\left(\frac{q_3^\prime}{d}\right) \ll q_3^{\prime 3}.
\end{align*}  
Consider another case where $q_3^\prime \mid m$, resulting in a summation resembling the zero-frequency case. Thus,
\begin{equation} \label{bound of q3prime2}
    \mathfrak{S}_{\neq 0}(q_3^\prime) \ll q_3^{\prime \; 4}, \ \ \text{if} \ q_3^\prime \mid m.
\end{equation}
This brings us to the following conclusion after combining all the above results deduced in equations \eqref{sum split in q1q2q3} - \eqref{bound of q3prime2}:
\begin{lemma} \label{char sum non zero}
The character sum in the non-zero frequency case is dominated by
    \begin{equation}
        \mathfrak{S}_{\neq 0}(q) \ll \begin{cases}
           q^{7/2} (q_1 q_2 q_3\dblprime)^{1/2}/n, & \text{if} \; \;(q_3^\prime, n_3^k n_3^{\prime k} m)=1 \;\text{or} \\ &\; \hspace{-0.1cm}\text{if} \ (q_3^\prime , m)=1 \; \text{and} \; (q_3^\prime, n_3^k n_3^{\prime k} ) \neq 1\\
           q^4/n, & \text{if} \; \; q_3^\prime/m 
        \end{cases},
    \end{equation}
    where $q = q_1 q_2 q_3^\prime q_3\dblprime$ with $n \mid q_1 q_2 \mid n^{\infty} $, $(q_3^\prime q_3\dblprime,n)=1$, $\mu(q_3^\prime) \neq 0$ and $q_3\dblprime$ is square full.
\end{lemma}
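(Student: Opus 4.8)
\emph{Approach.} This statement is the bookkeeping step that packages the case analysis carried out in \eqref{sum split in q1q2q3}--\eqref{bound of q3prime2} into a single clean estimate, so the plan is to exploit the multiplicativity of $\mathfrak{S}_{\neq 0}(q)$ along the coprime parts of $q$, insert the component bounds already obtained, and track exponents. By the Chinese Remainder Theorem, $\mathfrak{S}_{\neq 0}(q)$ from \eqref{character sum} factors over the coprime parts of $q$: writing $q = q_1 q_2 q_3^\prime q_3^\dblprime$ with $n \mid q_1 q_2 \mid n^\infty$, $(q_3^\prime q_3^\dblprime, n)=1$, $\mu(q_3^\prime)\neq 0$ and $q_3^\dblprime$ square-full, equations \eqref{sum split in q1q2q3} and \eqref{sum split in q3} give $\mathfrak{S}_{\neq 0}(q) = \mathfrak{S}_{\neq 0}(q_1 q_2)\,\mathfrak{S}_{\neq 0}(q_3^\prime)\,\mathfrak{S}_{\neq 0}(q_3^\dblprime)$, so it suffices to bound each factor separately and multiply.

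\emph{Components.} For the $q_1 q_2$ and $q_3^\dblprime$ pieces I would reassemble the inner sums over $a_1,a_2$ into products of Kloosterman sums (as already done above), apply Weil's bound, and use that the divisor-type factors that arise are $O(q^\varepsilon)$ on average; this yields $\mathfrak{S}_{\neq 0}(q_1 q_2)\ll (q_1 q_2)^4/n$ from \eqref{sum with mod q1q2} and $\mathfrak{S}_{\neq 0}(q_3^\dblprime)\ll q_3^{\dblprime\,4}$. For the squarefree part, after executing the $j_2$-sum, eliminating $\beta_2$ via the congruence $\pm\overline{\beta_1}\mp\overline{\beta_2}\equiv -m \bmod q_3^\prime$, and performing the substitution $\beta = \pm m\beta_1 + 1$, one arrives at the two-variable correlation sum \eqref{q3prime before matrix} of two Kloosterman sums twisted by linear fractional transformations $\gamma_1,\gamma_2$. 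In the generic range $(q_3^\prime, n_3^k n_3^{\prime k} m)=1$ I would invoke Proposition~3.4 of \cite{DF} (or the \'etale-cohomology bounds of \cite{FKM}), applicable because $\gamma_2\gamma_1^{-1}$ has nonzero determinant and is not the identity (its lower-left entry equals $1$), to get the square-root saving $\mathfrak{S}_{\neq 0}(q_3^\prime)\ll q_3^{\prime\,7/2}$ of \eqref{bound of q3prime1}; in the degenerate ranges where $q_3^\prime\mid m$, or $(q_3^\prime,m)=1$ but $q_3^\prime$ divides $n_3^k$ or $n_3^{\prime k}$, one of the Kloosterman sums collapses to a Ramanujan/Gauss-type sum and the bare bound $\mathfrak{S}_{\neq 0}(q_3^\prime)\ll q_3^{\prime\,4}$ of \eqref{bound of q3prime2} — which is all that is needed — suffices.

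\emph{Assembling, and the main difficulty.} Multiplying the three factors: in the generic case $\mathfrak{S}_{\neq 0}(q)\ll \frac{(q_1 q_2)^4}{n}\, q_3^{\prime\,7/2}\, q_3^{\dblprime\,4} = \frac{q^{7/2}(q_1 q_2 q_3^\dblprime)^{1/2}}{n}$ on recalling $q = q_1 q_2 q_3^\prime q_3^\dblprime$; in the degenerate case $\mathfrak{S}_{\neq 0}(q)\ll \frac{(q_1 q_2)^4}{n}\, q_3^{\prime\,4}\, q_3^{\dblprime\,4} = q^4/n$. These are precisely the two branches asserted in the lemma. The genuine obstacle — where essentially all the depth of the argument sits — is the $q_3^\prime$ factor: extracting true square-root cancellation from the correlation sum \eqref{q3prime before matrix} requires Deligne's bounds through the work of D\k{a}browski--Fisher and Fouvry--Kowalski--Michel, and one must check carefully that the composed M\"obius transformation $\gamma_2\gamma_1^{-1}$ is \emph{non-diagonal} (so that the sum does not reduce to its diagonal contribution, which would only give $q_3^{\prime\,4}$) and that the coprimality hypotheses separating the generic and degenerate ranges are correctly imposed; everything else is routine reassembly of Kloosterman sums via Weil's bound and elementary exponent counting.
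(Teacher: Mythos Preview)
Your proposal is correct and follows essentially the same route as the paper: you factor $\mathfrak{S}_{\neq 0}(q)$ multiplicatively over the coprime pieces $q_1q_2$, $q_3^\prime$, $q_3^\dblprime$, invoke the Weil-type bounds \eqref{sum with mod q1q2} and $\mathfrak{S}_{\neq 0}(q_3^\dblprime)\ll q_3^{\dblprime\,4}$ for the non-squarefree parts, and for the squarefree piece appeal to the D\k{a}browski--Fisher correlation bound in the generic range \eqref{bound of q3prime1} versus the trivial $q_3^{\prime\,4}$ in the degenerate ranges \eqref{bound of q3prime2}, then multiply out the exponents. Your identification of the non-triviality of $\gamma_2\gamma_1^{-1}$ (via its nonzero lower-left entry) as the crux for applying \cite{DF} matches the paper's treatment precisely.
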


\subsection{Zero Frequency}
In this subsection, we will estimate the contribution of the zero frequency $m=0$ to $\Omega$ denoted by $\Omega_0$ in \eqref{omega final}. This will allow us to estimate its total contribution to $\mathcal{S}_{k, E}(X)$. We have the following lemma.

\begin{lemma} \label{zero freq s}
Let $\mathcal{S}_{k,E}(X)$ be as in \eqref{final Sk(X)}. The total contribution of the zero frequency $m=0$ to $\mathcal{S}_{k,E}(X)$ is dominated by:
\begin{equation*}
\mathcal{S}_{k,E}(X) \ll  \frac{X^{5/3+ \varepsilon} Y^{1/2} K^{2/3} }{\mathcal{Q}^2}.
\end{equation*}
\end{lemma}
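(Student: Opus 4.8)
The plan is to substitute the two estimates we already have --- Lemma \ref{Zintegral} for the integral $\mathfrak{Z}$ and Lemma \ref{char sum zero freq} for the character sum $\mathfrak{S}_0$ --- into the expression \eqref{omega final} for $\Omega$, keeping only the term $m=0$, and then to feed the outcome back into the bound \eqref{final Sk(X)} for $\mathcal{S}_k(X)$. Writing $\Omega_0$ for the $m=0$ contribution to $\Omega$, equation \eqref{omega final} gives
\[
\Omega_0 \;=\; \frac{K}{n^2}\sum_{n_3\leq Y}\sum_{n_3^\prime\leq Y}\mathsf{a}(n_3)\,\mathsf{a}(n_3^\prime)\,\mathfrak{S}_0\,\mathfrak{Z},
\]
where $\mathfrak{S}_0$ and $\mathfrak{Z}$ are the quantities of \eqref{character sum} and \eqref{zintegral} at $m=0$. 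By Lemma \ref{Zintegral}, $\mathfrak{Z}\ll q^3/\mathcal{Q}^3$; by Lemma \ref{char sum zero freq}, $\mathfrak{S}_0\ll q^4/n$, and (by its derivation) $\mathfrak{S}_0$ is supported, up to a smaller $\gcd$-factor, on the pairs with $q\mid n_3^{\prime k}-n_3^k$. As there, one may take $n=1$ for the leading contribution, the case $n>1$ being entirely similar.

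Next I would bound the double sum over $n_3,n_3^\prime$. Using $|\mathsf{a}(n_3)\mathsf{a}(n_3^\prime)|\leq\tfrac12(|\mathsf{a}(n_3)|^2+|\mathsf{a}(n_3^\prime)|^2)$, the $L^2$-bound \eqref{bound a(n)}, and the fact that for each fixed $n_3$ the number of $n_3^\prime\leq Y$ with $q\mid n_3^{\prime k}-n_3^k$ is $\ll(Y/q+1)q^\varepsilon$ (the count of $k$-th roots of a fixed residue mod $q$ being $\ll_k q^\varepsilon$, with the usual extra bookkeeping when $\gcd(n_3,q)$ is large), one gets
\[
\sum_{\substack{n_3,\,n_3^\prime\leq Y\\ q\mid n_3^{\prime k}-n_3^k}}\bigl|\mathsf{a}(n_3)\mathsf{a}(n_3^\prime)\bigr|\;\ll\;Y^{1+\varepsilon}\Bigl(\frac{Y}{q}+1\Bigr).
\]
Hence, for $q\sim D$,
\[
\Omega_0\;\ll\;\frac{K}{n^2}\cdot\frac{q^4}{n}\cdot\frac{q^3}{\mathcal{Q}^3}\cdot Y^{1+\varepsilon}\Bigl(\frac{Y}{D}+1\Bigr)\;\ll\;\frac{K\,D^{7}}{n^{3}\,\mathcal{Q}^{3}}\,Y^{1+\varepsilon}\Bigl(\frac{Y}{D}+1\Bigr).
\]

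Finally I would substitute $\Omega_0^{1/2}$ into \eqref{final Sk(X)}. For $\Theta$ one applies the Ramanujan bound of Lemma \ref{ramanujan bound} together with partial summation to dispose of the weight $m^{-2/3}$, yielding $\Theta\ll(K/n^2)^{1/3+\varepsilon}$ up to a power of $n$ mild enough that, once combined with the $n^{-3/2}$ in $\Omega_0^{1/2}$, the sum over $n\mid q$ converges. The sum over $q\sim D$ then produces a factor $D$, the sum over $m_1,m_2\ll X^\varepsilon$ a factor $X^\varepsilon$, and collecting powers of $D$ leaves
\[
\mathcal{S}_k(X)\;\ll\;\sup_{D\ll\mathcal{Q}}\;\frac{X^{5/3+\varepsilon}\,K^{2/3}\,Y^{1/2}}{\mathcal{Q}^{5/2}}\;D^{1/2}\Bigl(\frac{Y}{D}+1\Bigr)^{1/2}.
\]
Since $D^{1/2}(Y/D+1)^{1/2}=(Y+D)^{1/2}\ll\mathcal{Q}^{1/2}$ --- because $Y=X^{1/k}\leq X^{1/3}\ll X^{1/2}\asymp\mathcal{Q}$ for $k\geq3$ --- this collapses to $\mathcal{S}_k(X)\ll X^{5/3+\varepsilon}Y^{1/2}K^{2/3}/\mathcal{Q}^{2}$, which is the claimed bound.

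The step I expect to be the main obstacle is the treatment of the $n_3,n_3^\prime$-sum: because only $L^2$-control on $\mathsf{a}$ is available one cannot decouple $n_3$ from $n_3^\prime$, so one must exploit the congruence $q\mid n_3^{\prime k}-n_3^k$ forced by $\mathfrak{S}_0$ to localise $n_3^\prime$, and one must verify that the resulting saving $(Y/D+1)^{1/2}$ combines with the factor $D^{1/2}$ from the $q$-summation to give exactly $\mathcal{Q}^{1/2}$ and no more; the degenerate pairs (those with $n_3$ sharing a large factor with $q$) need a separate, but routine, accounting. A lesser technical point is securing a second-moment bound for $\Lambda(n,\cdot)$ at fixed $n$ clean enough for the divisor sum over $n\mid q$ to converge.
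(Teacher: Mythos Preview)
Your approach is essentially the paper's: insert Lemma~\ref{Zintegral} and Lemma~\ref{char sum zero freq} into \eqref{omega final}, then feed $\Omega_0^{1/2}$ and $\Theta^{1/2}$ back into \eqref{final Sk(X)}. The one place you are more careful than the paper is the $n_3,n_3'$ sum: the paper simply collapses the double sum to $\sum_{n_3\leq Y}|\mathsf a(n_3)|\ll Y$, whereas you track the count of $n_3'$ satisfying $q\mid n_3'^k-n_3^k$ and carry the factor $(Y/D+1)$; your observation that $D^{1/2}(Y/D+1)^{1/2}=(Y+D)^{1/2}\ll \mathcal Q^{1/2}$ then recovers the same endpoint. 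This is a genuine clarification, since for small $D$ the congruence does not pin $n_3'$ down to $O(1)$ choices.

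On the point you flag about $\Theta$: the paper does \emph{not} bound $\Theta$ at fixed $n$. Instead it keeps $\Theta^{1/2}$ inside the $n$-sum and applies Cauchy--Schwarz in $n$ against Lemma~\ref{ramanujan bound}, obtaining
\[
\sum_{n\ll D}\frac{\Theta^{1/2}}{n^{13/6}}
\ll\Bigl(\sum_{n}\frac{1}{n^{3}}\Bigr)^{1/2}\Bigl(\mathop{\sum\sum}_{n^{2}m\ll K}\frac{|\Lambda(n,m)|^{2}}{(n^{2}m)^{2/3}}\Bigr)^{1/2}\ll K^{1/6}.
\]
This sidesteps the need for any individual second-moment bound on $\Lambda(n,\cdot)$, so you can replace your heuristic $\Theta\ll (K/n^{2})^{1/3+\varepsilon}$ by this device and the ``lesser technical point'' disappears.
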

\begin{proof}
On substituting bounds for $\mathfrak{Z}$ and $\mathfrak{S}_0$ from Lemma \ref{Zintegral} and \ref{char sum zero freq}, respectively into \eqref{omega final}, we see that
\begin{eqnarray*}
    \Omega_0 &\ll& \sup_{K_1 \ll K/n^2} K_1 \sum_{n_3 \leqslant Y} \left|\mathsf{a}(n_3) \mathsf{a}(n_3)\right|  \frac{q^4}{n} \frac{D^2}{\mathcal{Q}^2} + K_1 \mathop{\sum_{n_3 \leqslant Y} \sum_{n_3^{\prime} \leqslant Y}}_{n_3^k \equiv n_3^{\prime k} \ \textrm{mod } q_2} \left|\mathsf{a}(n_3) \mathsf{a}(n_3^\prime)\right|\frac{q^3q_1}{n}\frac{D^2}{\mathcal{Q}^2}\\
    &\ll& \frac{K Y D^2 q^4}{ n^3\mathcal{Q}^2} + \frac{KY^2D^2q^4}{n^3q_2^2\mathcal{Q}^2} \ll \frac{K Y D^2 q^4}{ n^3\mathcal{Q}^2}. 
\end{eqnarray*}
Using the above bound of $\Omega_0$ in \eqref{final Sk(X)}, we arrive at
\begin{align*}
    &\sup_{D \ll \mathcal{Q}} \frac{X^{5/3+\varepsilon}}{\mathcal{Q} D^4} \sum_{q \sim D} \sum_{\substack{n \ll D \\ n \mid q}} n^{1/3}  \Theta^{1/2} \left(  \frac{K Y D^2 q^4}{ n^3\mathcal{Q}^2}\right)^{1/2} \\
    &\ll \sup_{D \ll \mathcal{Q}} \frac{X^{ 5/3 + \varepsilon} Y^{1/2}  K^{1/2} D}{\mathcal{Q}^2 D^4} \sum_{q \sim D} q^2 \sum_{\substack{n \ll D \\ n \mid q}} \frac{1}{n^{7/6}} \Theta^{1/2} \\
    &\ll \sup_{D \ll \mathcal{Q}} \frac{X^{ 5/3 + \varepsilon} Y^{1/2}  K^{1/2}}{\mathcal{Q}^2D^3} \sum_{q \sim D} q^2 \sum_{\substack{n \ll D \\ n \mid q}} \frac{1}{n^{7/6}} \Theta^{1/2} .
\end{align*}
Consider
\begin{eqnarray*}
    \sum_{\substack{n \ll D \\ n \mid q}} \frac{\Theta^{1/2}}{n^{7/6}} &=&  \sum_{\substack{n \ll D \\ n \mid q}} \frac{1}{n^{7/6}} \left( \sum_{m \ll K/n^2} \frac{|B(n,m)|^2}{m^{2/3}} \right)^{1/2} \\
    &\ll& \Bigg(\sum_{\substack{n \ll D \\ n \mid q}} \frac{1}{n} \Bigg)^{1/2} \left(\mathop{\sum\sum}_{n^2m \ll K} \frac{|B(n,m)|^2}{(n^2m)^{2/3}}\right)^{1/2} \ll K^{1/6}.
\end{eqnarray*}
Using this bound, we get
\begin{equation*}
    \mathcal{S}_{k,E}(X) \ll \frac{X^{5/3+ \varepsilon} Y^{1/2} K^{2/3} }{\mathcal{Q}^2}. 
\end{equation*}
This proves the bound stated in the lemma.
\end{proof}

\subsection{Non-Zero Frequencies} In this subsection, we will determine a bound for $\Omega$ in \eqref{omega final} when $m \neq 0$. The contribution of $\Omega$ in this case is denoted by $\Omega_{\neq 0}$, which will ultimately enable us to estimate our main sum $\mathcal{S}_{k,E}(X)$ given in \eqref{final Sk(X)}. The lemma that follows gives us a bound for the sum taken over square full integers, which will be required to prove the bound for $\mathcal{S}_{k, E}(X)$.
\begin{lemma} \label{square full}
Let $n$ and $ X > 1$. We have
    \begin{equation*}
        \sum_{\substack{n \leqslant X \\ n \text{-square full}}} 1 \ll X^{1/2},
    \end{equation*}
where $X$ is sufficiently large.    
\end{lemma}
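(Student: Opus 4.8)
The plan is to invoke the classical parametrization of square-full (powerful) integers and then estimate everything trivially. Recall that $n$ is square-full exactly when $p \mid n \Rightarrow p^2 \mid n$, and that every such $n$ admits a unique factorization $n = a^2 b^3$ with $b$ squarefree: writing $n = \prod_p p^{e_p}$ with every $e_p \ge 2$, I would decompose $e_p = 2f_p + 3g_p$ with $g_p \in \{0,1\}$ (possible for any exponent $\ge 2$) and set $a = \prod_p p^{f_p}$, $b = \prod_{g_p = 1} p$; uniqueness follows by reading off $b$ as the product of primes with odd exponent in $n$.

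Granting this, I would bound the counting function by discarding the squarefreeness of $b$ and summing over the admissible region $a^2 b^3 \le X$. Since $b^3 \le X$ forces $b \le X^{1/3}$, and then $a^2 \le X/b^3$, we get
\[
\sum_{\substack{n \le X \\ n\text{ square-full}}} 1 \;\le\; \sum_{1 \le b \le X^{1/3}} \#\{a \ge 1:\ a^2 \le X/b^3\} \;\le\; \sum_{1 \le b \le X^{1/3}} \Big(\frac{X}{b^3}\Big)^{1/2} \;=\; X^{1/2} \sum_{1 \le b \le X^{1/3}} \frac{1}{b^{3/2}} \;\ll\; X^{1/2},
\]
because $\sum_{b \ge 1} b^{-3/2} = \zeta(3/2) < \infty$. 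This is exactly the claimed estimate, with an absolute implied constant (one may take $\zeta(3/2)$).

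I do not anticipate any genuine obstacle here: this is the elementary Erd\H{o}s--Szekeres bound for powerful numbers, and the only point deserving a word of justification is the representation $n = a^2 b^3$, which is immediate from the prime factorization as above (and could equally be quoted from a standard reference). The inequality in fact holds for every $X \ge 1$, hence a fortiori for the "sufficiently large" $X$ in the statement.
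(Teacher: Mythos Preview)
Your argument is correct and is exactly the standard proof of this classical bound. The paper itself states the lemma without proof, treating it as a known fact, so there is nothing to compare against; your write-up would serve perfectly well as the missing justification.
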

\begin{proof}
    Let $n>1$ be a square full integer. We can always write $n$ as $n=n_1^2 n_2$ with $n_2\mid n_1$. Therefore, we can split the sum over $n$ as
    \begin{equation*}
        \sum_{n_2\leqslant X} \sum_{n_1\leqslant \sqrt{\frac{X}{n_2}} } 1
        \leqslant \sum_{n_2\leqslant X} \sum_{n_3 \leqslant \frac{1}{n_2}\sqrt{\frac{X}{n_2}}} 1 \leqslant \sqrt{X} \sum_{n_2\leqslant X} \frac{1}{n_2^{3/2}} \ll \sqrt{X}.
    \end{equation*}
\end{proof}
\noindent Subsequently, we establish the following lemma.
\begin{lemma} \label{nonzero freq s}
    Let $\mathcal{S}_{k,E}(X)$ be as in \eqref{final Sk(X)}. The total contribution of the non-zero frequencies $m\neq0$ to $\mathcal{S}_{k,E}(X)$ is dominated by:
    \begin{equation*}
    \mathcal{S}_{k,E}(X) \ll \frac{X^{5/3 + \varepsilon} Y K^{1/6}}{\mathcal{Q}^{7/4}}.
\end{equation*}
\end{lemma}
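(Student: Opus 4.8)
The plan is to run the same scheme as in the proof of Lemma \ref{zero freq s}, but feeding in the square-root-type estimate for the character sum from Lemma \ref{char sum non zero} in place of the trivial bound. Start from $\Omega_{\neq 0}$, the $m\neq 0$ part of $\Omega$ in \eqref{omega final}. By Lemma \ref{Zintegral} the integral $\mathfrak{Z}$ is negligible unless $0<|m|\ll M=nq/K$ (recall \eqref{M}), and on this range $\mathfrak{Z}\ll q^3/\mathcal{Q}^3$. For the $n_3,n_3'$ sums use Cauchy--Schwarz and \eqref{bound a(n)}, so that $\sum_{n_3\le Y}|\mathsf{a}(n_3)|\ll Y^{1+\varepsilon}$ and the double sum over $n_3,n_3'$ costs at most $Y^{2+\varepsilon}$. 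It remains to insert the bound for $\mathfrak{S}$.

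The decisive input is Lemma \ref{char sum non zero}. In the generic range, where $(q_3',n_3^k n_3'^k m)=1$, we have $\mathfrak{S}_{\neq 0}(q)\ll q^{7/2}(q_1q_2q_3'')^{1/2}/n$; in the remaining ranges only $\mathfrak{S}_{\neq 0}(q)\ll q^4/n$ is available, but then one of the conditions $q_3'\mid m$ or $(q_3',n_3 n_3')\neq 1$ holds, and after splitting $q_3'$ multiplicatively into its part coprime to $n_3 n_3' m$ and its complementary part, this shortens the $m$-sum (to $\ll M/q_3'$ terms) or the $n_3$/$n_3'$-sums (to $\ll Y/q_3'^{1/2}$, by Cauchy--Schwarz); a short computation shows these contributions never exceed the generic one. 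Since $M=nq/K$, the factor $K/n^2$ in \eqref{omega final} is absorbed, and we arrive at
$$\Omega_{\neq 0}\ll \frac{Y^{2+\varepsilon}\,q^{15/2}\,(q_1q_2q_3'')^{1/2}}{n^2\,\mathcal{Q}^3},\qquad\text{hence}\qquad \Omega_{\neq 0}^{1/2}\ll \frac{Y^{1+\varepsilon}\,q^{15/4}\,(q_1q_2q_3'')^{1/4}}{n\,\mathcal{Q}^{3/2}}.$$

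Now substitute this into \eqref{final Sk(X)}. Pulling out $X^{5/3}Y^{1+\varepsilon}/(\mathcal{Q}^{5/2}D^4)$ and $q^{15/4}\asymp D^{15/4}$, we are reduced to estimating $\sum_{q\sim D}\sum_{n\mid q}\Theta^{1/2}\,n^{-2/3}\,(q_1q_2q_3'')^{1/4}$. Exactly as in Lemma \ref{zero freq s} we combine the $n$-sum with the $m$-sum hidden in $\Theta$: swapping the order of summation, $\sum_{q\sim D,\,n\mid q}(q_1q_2q_3'')^{1/4}\ll D^{1+\varepsilon}n^{-3/4}$ because the $n$-part $q_1q_2\mid n^{\infty}$ and the square-full part $q_3''$ each contribute a convergent series (the latter bounded as in Lemma \ref{square full}); then Cauchy--Schwarz together with Lemma \ref{ramanujan bound} gives
$$\sum_{n}\frac{\Theta^{1/2}}{n^{17/12}}\ll\Big(\sum_n n^{-3/2}\Big)^{1/2}\Big(\mathop{\sum\sum}_{n^2m\ll K}\frac{|\Lambda(n,m)|^2}{(n^2m)^{2/3}}\Big)^{1/2}\ll K^{1/6+\varepsilon}.$$
Hence $\sum_{q\sim D}\sum_{n\mid q}\Theta^{1/2}n^{-2/3}(q_1q_2q_3'')^{1/4}\ll D^{1+\varepsilon}K^{1/6}$, which yields $\mathcal{S}_k(X)\ll X^{5/3+\varepsilon}Y K^{1/6}D^{3/4}/\mathcal{Q}^{5/2}$; taking the supremum over $D\ll\mathcal{Q}$ gives the claimed bound $X^{5/3+\varepsilon}YK^{1/6}/\mathcal{Q}^{7/4}$.

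The main obstacle is the bookkeeping in the second paragraph: one must check that \emph{every} degenerate sub-case of Lemma \ref{char sum non zero}, including the ``mixed'' cases where $q_3'$ is only partially coprime to $n_3 n_3' m$, is genuinely dominated by the generic estimate, since there the character-sum bound is a full power of $q$ worse and the compensating savings come solely from the shortened auxiliary ($m$- or $n_3$/$n_3'$-) sums. The remaining manipulations of divisor sums over the factorization $q=q_1q_2\cdot q_3'\cdot q_3''$ are routine.
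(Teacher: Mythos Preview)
Your proposal is correct and follows the same route as the paper: bound $\Omega_{\neq 0}$ via Lemmas \ref{Zintegral} and \ref{char sum non zero}, feed into \eqref{final Sk(X)}, decompose $q=q_1q_2q_3'q_3''$, and close with Lemma \ref{square full} together with the Cauchy--Schwarz step $\sum_n \Theta^{1/2}/n^{17/12}\ll K^{1/6}$. The paper carries out the degenerate sub-cases of Lemma \ref{char sum non zero} explicitly (obtaining for them the slightly stronger exponent $\mathcal{Q}^{-2}$), whereas you correctly note they are dominated by the generic term; your use of $Y/q_3'^{1/2}$ via Cauchy--Schwarz is in fact the careful bound given that $\mathsf{a}$ is only $L^2$-bounded.
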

\begin{proof}
    On substituting the bounds for the integral and the character sum from Lemma \ref{Zintegral} and Lemma \ref{char sum non zero}, respectively into \eqref{omega final}. We will first bound $\Omega$, where we denote the contribution of $\Omega$ to $\mathcal{S}_{k,E}(X)$ for the case when $(q_3^\prime, n_3^k n_3^{\prime k} m)=1$ or if $(q_3^\prime, m)=1$ and $ (n_3^k n_3^{\prime k}, q_3^\prime) \neq 1$ as $\Omega^1_{\neq 0}$, for the subsequent case as $\Omega_{\neq 0}^2$. Let us begin with the first case, i.e.,
\begin{align*}
    \Omega^1_{\neq0} &\ll \sup_{K_1\ll K/n^2} K_1  \sum_{n_3 \leqslant Y} \sum_{n_3^\prime \leqslant Y} \left|\mathsf{a}(n_3)\right|\left|\mathsf{a}(n_3^\prime)\right| \sum_{m \ll M} \frac{q^{7/2} (q_1 q_2 q_3\dblprime)^{1/2}}{n} \frac{D^2}{\mathcal{Q}^2} \\
    &\ll \frac{D^2 Y^2 q^{9/2} (q_1 q_2 q_3\dblprime)^{1/2}}{n^2 \mathcal{Q}^2},
\end{align*}
where $M$ is defined in equation \eqref{M}. Analogously, we have
\begin{align*}
    \Omega^2_{\neq 0} &\ll \sup_{K_1\ll K/n^2} K_1  \sum_{n_3 \leqslant Y} \sum_{n_3^\prime \leqslant Y} \left|\mathsf{a}(n_3)\right|\left|\mathsf{a}(n_3^\prime)\right| \sum_{m^* \ll M/q_3^\prime} \frac{q^4}{n} \frac{D^2}{\mathcal{Q}^2} \\
    &\ll \frac{D^2 Y^2 q^4 q_1q_2q_3\dblprime }{n^2 \mathcal{Q}^2}.
\end{align*} 
On substituting these bounds in \eqref{final Sk(X)}, we get
\begin{align*}
    \mathcal{S}^1_{k,E}(X) &\ll \sup_{D \ll \mathcal{Q}} \frac{X^{5/3 + \varepsilon} }{\mathcal{Q}D^4} \sum_{\substack{n \ll D \\ n \mid q} } n^{1/3} \sum_{q \sim D} \Theta^{1/2} \left(\frac{D^2 Y^2 q^{9/2} (q_1 q_2 q_3\dblprime)^{1/2}}{n^2 \mathcal{Q}^2  }\right)^{1/2} \\
    &\ll \sup_{D \ll \mathcal{Q}} \frac{X^{5/3 + \varepsilon} Y }{\mathcal{Q}^2 D^3} \sum_{\substack{n \ll D \\ n \mid q} } \frac{1}{n^{2/3}} \sum_{q \sim D} \Theta^{1/2} q^{9/4} (q_1 q_2 q_3\dblprime)^{1/4} \\
    &\ll \sup_{D \ll \mathcal{Q}} \frac{X^{5/3 + \varepsilon} Y }{\mathcal{Q}^2 D^3}  \sum_{\substack{n \ll D \\ n \mid q_1q_2} } \frac{(q_1q_2)^{5/2}}{n^{2/3}} \Theta^{1/2} \sum_{q_3^\prime \sim \frac{D}{q_1q_2}} q_3^{\prime \; 9/4} \sum_{q_3\dblprime \sim \frac{D}{q_1q_2q_3^\prime} } {q_3\dblprime}^{5/2}.
\end{align*}
Using Lemma \ref{square full} in the last sum, we get
\begin{align*}
  \mathcal{S}^1_{k,E}(X) &\ll \sup_{D \ll \mathcal{Q}} \frac{X^{5/3 + \varepsilon} Y D^{1/2}}{\mathcal{Q}^2} \sum_{\substack{n \ll D \\ n \mid q_1q_2} } \frac{1}{n^{2/3}q_1q_2} \Theta^{1/2} \sum_{q_3^\prime \sim \frac{D}{q_1q_2}} \frac{1}{q_3^{\prime 5/4}} \\
    &\ll \sup_{D \ll \mathcal{Q}} \frac{X^{5/3 + \varepsilon} Y D^{1/4}}{\mathcal{Q}^2} \sum_{\substack{n \ll D \\ n \mid q_1q_2} } \frac{1}{n^{2/3}(q_1q_2)^{3/4}} \Theta^{1/2} \\
    &\ll \frac{X^{5/3 + \varepsilon} Y}{\mathcal{Q}^{7/4}} \sum_{n \ll D} \frac{\Theta^{1/2}}{n^{17/12}} \\
    &\ll \frac{X^{5/3 + \varepsilon} Y K^{1/6}}{\mathcal{Q}^{7/4}}.
\end{align*}
Note that
\begin{align*}
    \sum_{n \ll D} \frac{\Theta^{1/2}}{n^{17/12}} &= \sum_{n \ll D} \frac{1}{n^{17/12}} \left( \; \sum_{n^2 m \ll K} \frac{|B(n,m)|^2}{m^{2/3}}\right)^{1/2} \\
    &\ll \left(\sum_{n\ll D} \frac{1}{n^{3/2}}\right)^{1/2} \left( \; \mathop{\sum\sum}_{n^2 m \ll K} \frac{|B(n,m)|^2}{(n^2 m)^{2/3}}\right)^{1/2} \ll K^{1/6}.
\end{align*}
Similarly, we can bound the sum in the second case as follows:
\begin{align*}
    \mathcal{S}^2_{k,E}(X) &\ll \sup_{D \ll \mathcal{Q}} \frac{X^{5/3 + \varepsilon} }{\mathcal{Q}D^4} \sum_{\substack{n \ll D \\ n \mid q} } n^{1/3} \sum_{q \sim D} \Theta^{1/2} \left(\frac{D^2 Y^2 q^4 q_1q_2q_3\dblprime }{n^2 \mathcal{Q}^2} \right)^{1/2} \\
    &\ll \sup_{D \ll \mathcal{Q}} \frac{X^{5/3 + \varepsilon} Y D^{7/2}}{\mathcal{Q}^2 D^3} \sum_{\substack{n \ll D \\ n \mid q_1q_2} } \frac{1}{n^{2/3} q_1q_2} \Theta^{1/2} \sum_{q_3^\prime \sim \frac{D}{q_1q_2}} \frac{1}{q_3^{\prime 3/2}}  \\
    &\ll \frac{X^{5/3 + \varepsilon} Y }{\mathcal{Q}^2} \sum_{\substack{n \ll D \\ n \mid q_1q_2} } \frac{\Theta^{1/2}}{n^{2/3} (q_1q_2)^{1/2}}  \\
    &\ll \frac{X^{5/3 + \varepsilon} Y}{\mathcal{Q}^{2}} \sum_{n \ll D} \frac{\Theta^{1/2}}{n^{7/6}}\\
    &\ll \frac{X^{5/3 + \varepsilon} Y K^{1/6}}{\mathcal{Q}^{2}}.
\end{align*}
Also, note that
\begin{eqnarray*}
    \sum_{n \ll D} \frac{\Theta^{1/2}}{n^{7/6}} \ll \left( \sum_{n \ll D} \frac{1}{n} \right)^{1/2}  \left( \; \mathop{\sum\sum}_{n^2 m \ll K} \frac{|B(n,m)|^2}{(n^2 m)^{2/3}}\right)^{1/2} \ll K^{1/6}.
\end{eqnarray*}
Thus,
\begin{equation*}
    \mathcal{S}_{k,E}(X) \ll \frac{X^{5/3 + \varepsilon} Y K^{1/6}}{\mathcal{Q}^{7/4}}.
\end{equation*}
\end{proof}
\subsection{Error term} \label{error}
In this subsection we give estimates for $\mathcal{S}_k(X)$ corresponding to the case $n^2m X/q^3 \ll X^\varepsilon$ (see subsection \ref{subsection voronoi}), from equation \eqref{voronoi1} we have
\begin{multline} \label{voronoi error}
    \sum_{r\in \mathbb{Z}}\mathcal{A}(r)e\left(\frac{ar}{q}\right) v_u(r) = q\sum_{\pm} \sum_{n\mid q} \sum_{m=1}^\infty \frac{B(n,m)}{nm} S\left(\overline{a},\pm m; \frac{q}{n}\right) G_\pm\left(\frac{n^2m}{q^3}\right) \\ + \frac{1}{2q^2} \Tilde{V}(v_u a,q),
\end{multline}
where $G_\pm$ is defined in equation \eqref{Gpm} and the second term will appear in the case of $d_3$. On substituting equations \eqref{T poisson} and \eqref{voronoi error} into equation \eqref{Sk(X) before summation}, we get
\begin{multline} \label{SkX of yx<}
   \mathcal{S}_k(X) = \frac{X}{\mathcal{Q}}  \sum_{1 \leqslant q \leqslant \mathcal{Q}} \frac{1}{q^2} \sum_{\pm} \sum_{n \mid q} \sum_{m\ll \frac{q^3 X^\varepsilon}{n^2X}} \frac{B(n,m)}{nm} \sum_{n_3 \leqslant Y} \mathsf{a}(n_3)  \\ \times \sum_{m_1,m_2\in \mathbb{Z}} \mathfrak{E_2}(...) \int_{\mathbb{R}} \psi(q, u) U(u) \mathfrak{J}(...) G_\pm\left(\frac{n^2m}{q^3}\right) e\left(\frac{-u n_3^k}{q\mathcal{Q}}\right)  du \\ + \frac{X}{2\mathcal{Q}} \sum_{1 \leqslant q \leqslant \mathcal{Q}} \frac{1}{q^5} \sum_{n_3 \leqslant Y} \mathsf{a}(n_3) \sum_{m_1,m_2\in \mathbb{Z}} \int_{\mathbb{R}} \psi(q,u) U(u) \mathfrak{C}_1(...) \mathfrak{J}(...)du,
\end{multline}
where
$\mathfrak{J}$, $\mathfrak{E_1}$ and $\mathfrak{C}_2$ are defined in equations \eqref{integral poisson}, \eqref{C1}, and \eqref{C2}, respectively. We begin by estimating the contribution of the first term to $\mathcal{S}_k(X)$, denoted by $\mathcal{S}_{k,E}(X)$. The other term will be estimated in the next section. Recall from \eqref{G} and \eqref{Gpm}, we get
\begin{equation*}
    G_\pm(y) = \frac{1}{2\pi i}\int_{(\sigma)} y^{-s} \kappa_\pm(s) \Tilde{v_u}(-s) ds,
\end{equation*}
where $\sigma > -1 + \text{max}\{-\text{Re}(\alpha_1), -\text{Re}(\alpha_2), -\text{Re}(\alpha_3)\}$ with 
\begin{equation*}
    \kappa_\pm(s) = \frac{\pi^{-3s-3/2}}{2} \left( \prod_{j=1}^3 \frac{\Gamma\left(\frac{1+s+\alpha_j}{2}\right)}{\Gamma\left(\frac{-s-\alpha_j}{2}\right)} \mp i \prod_{j=1}^3 \frac{\Gamma\left(\frac{1+s+\alpha_j + 1}{2}\right)}{\Gamma\left(\frac{-s-\alpha_j + 1}{2}\right)}\right).
\end{equation*}
Now, using the second derivative bound given in Lemma \ref{exponential sum}, we get
\begin{equation*}
    \Tilde{v_u}(-s) = X^{-s} \int_0^\infty V(z) e\Bigg(\frac{uzX}{q\mathcal{Q}}\Bigg) z^{-s} \frac{dz}{z} \ll X^{-\sigma} \sqrt{\frac{q\mathcal{Q}}{uX}}.
\end{equation*}
For the gamma factor, we move the contour $\sigma$ to the left up to $\sigma = -5/2$ passing through the poles given by
\begin{equation*}
    \frac{1+\sigma + \text{Re}(\alpha_j) + \ell}{2} = 0 \iff \sigma = -1-\text{Re}(\alpha_j) - \ell.
\end{equation*}
Thus, we get the bound
\begin{equation*}
    G_\pm(y)\ll (yX)^{5/2} \sqrt{\frac{q\mathcal{Q}}{uX}} \int_{-\infty}^\infty |\kappa_\pm(-5/2+i\tau)| d\tau + \sqrt{\frac{q\mathcal{Q}}{uX}} \sum_{\ell = 0,1} \sum_{j=1}^3 (yX)^{1+\ell + \text{Re}(\alpha_j)},
\end{equation*}
the second term above represents the contribution of the poles. Using the Stirling bound $|\kappa_\pm(-5/2 + i\tau)| \ll (1+|\tau|)^{-6}$, we arrive at
\begin{equation*}
    G_\pm(y) \ll \sqrt{\frac{q\mathcal{Q}}{uX}} \left( (yX)^{5/2} + \sum_{\ell = 0,1} \sum_{j=1}^3 (yX)^{1+\ell + \text{Re}(\alpha_j)} \right).
\end{equation*}
 Since $1+ \ell + \text{Re}(\alpha_j) = 1/2 + \beta_j$ for some $\beta_j>0$, and $yX \ll X^\varepsilon$, we have
\begin{equation*}
    G_\pm\Bigg(\frac{n^2m}{q^3}\Bigg) \ll \sqrt{\frac{n^2m \mathcal{Q}}{q^2 u}}.
\end{equation*}
Inserting this bound, together with Weil's bound for Kloosterman sums, and Lemma \ref{ramanujan bound}, we get equation \eqref{voronoi error} is bounded by 
\begin{equation*}
    \frac{q^2\sqrt{\mathcal{Q}}}{\sqrt{Xu}}.
\end{equation*}
Compared with the trivial bound of equation \eqref{voronoi error}, which is $X$. In this case, we are saving $X^{3/2}/(q^2\sqrt{Q})$, which is the same as what we had saved earlier in \eqref{voronoi yx>}. (We have ignored $\sqrt{u}$ in the denominator as the integral over $u$ of $S_{k,E}(X)$ in \eqref{SkX of yx<} will balance this.) Recall from our analysis in subsection \ref{subsection poisson}, we saved $X^2/\mathcal{Q}$ in $n_1$ and $n_2$ sums, and we get square root cancellation in the sum over $a $ modulo $ q$. In the present case, we get a total savings of $X^{7/2}/(q^2 \mathcal{Q})$, over the trivial bound $X^2Y$ in equation \eqref{Sk(X) before summation}, as the remaining integrals do not exhibit oscillations. The total contribution of $n^2m X/q^3 \ll X^\varepsilon$ to $\mathcal{S}_k(X)$ in equation \eqref{SkX of yx<} is bounded by
\begin{equation*}
    \frac{q^2 Y\mathcal{Q}}{X^{3/2}},
\end{equation*}
which is dominated by the bounds obtained in Lemma \ref{zero freq s} and \ref{nonzero freq s}.
\subsection{Final Estimates:}
Finally, substituting bounds from Lemma \ref{zero freq s} and Lemma \ref{nonzero freq s} into equation \eqref{final Sk(X)}, we get
\begin{equation*}
    \mathcal{S}_{k,E}(X) \ll \frac{X^{5/3+ \varepsilon} Y^{1/2} K^{2/3} }{\mathcal{Q}^2} + \frac{X^{5/3 + \varepsilon} Y K^{1/6}}{\mathcal{Q}^{7/4}}, 
\end{equation*}
as $K \asymp X^{1/2}$ and $\mathcal{Q} = X^{1/2}$, we have the following lemma.
\begin{lemma} \label{final SkE}
The error term defined in equation \eqref{error term} is bounded by:
    \begin{equation*} 
    \mathcal{S}_{k,E}(X) \ll \begin{cases}
        X^{7/8 + \varepsilon} Y  & \text{for} \quad k = 3 \\
        X^{1+\varepsilon} Y^{1/2}  &  \text{for} \quad k\geqslant 4.
    \end{cases}
\end{equation*}
\end{lemma}

\section{Main term in Theorem \ref{thm1}}
In this section, we have proved the main term for Theorem \ref{thm1}. Recall from equation \eqref{main term} the main term is given by
\begin{multline} \label{Skm}
    \mathcal{S}_{k,M}(X) = \frac{X}{2\mathcal{Q}} \sum_{1\leqslant q \leqslant \mathcal{Q}} \frac{1}{q^5} \sum_{n_3 \leqslant Y} \mathsf{a}(n_3) \int_{\mathbb{R}} \psi(q,u) U(u)  \\ \times \mathop{\sum\sum}_{m_1,m_2 \in \mathbb{Z}} \mathfrak{C}_1(m_1,m_2,n_3,a;q)\mathfrak{J}(m_1,m_2,u,q) e\left(\frac{-un_3^k}{q\mathcal{Q}}\right) du,
\end{multline}
where $\mathfrak{J}$ is an integral transform defined in equation \eqref{integral poisson} and the character sum $\mathfrak{C}_1$ is defined in equation \eqref{C1} as
\begin{equation*}
    \mathfrak{C}_1(m_1,m_2,n_3,a;q) = \sideset{}{^\star}\sum_{a \; \textrm{mod } q} \Tilde{V}(v_u,a,q) \mathfrak{C}(m_1,m_2,a;q) e\left(\frac{-a n_3^k}{q}\right).
\end{equation*}
Substituting the expression for $\Tilde{\mathcal{V}}$, from equation \eqref{V tilde}, we can write the above sum as
\begin{multline} \label{C1 sum main term}
    \mathfrak{C}_1(...) = \Tilde{v_u}(1) \sum_{n \mid q} n d(n) P_2(n,q) \mathfrak{C}_2(m_1,m_2,n,0,n_3;q) \\
    + \Tilde{v_u}^\prime(1) \sum_{n \mid q} n d(n) P_1(n,q) \mathfrak{C}_2(m_1,m_2,n,0,n_3;q) \\
    + \frac{1}{2} \Tilde{v_u}\dblprime(1) \sum_{n \mid q} n d(n) \mathfrak{C}_2(m_1,m_2,n,0,n_3;q),
\end{multline}
where $\mathfrak{C}_2$ is defined in equation \eqref{C2}. We have the following lemma.
\begin{lemma} \label{bound for C2 main}
    Let $\mathfrak{C}_2$ be defined as in equation \eqref{C2}, then we have
    \begin{equation*}
        \mathfrak{C}_2(m_1,m_2,n,0,n_3;q) \ll q^{3/2}.
    \end{equation*}
\end{lemma}
\begin{proof}
    From equation \eqref{C2 simplified} with $m=0$, we have
    \begin{align*}
       \mathfrak{C}_2(...) &= q  \; \varepsilon_q^2  \sideset{}{^\star}\sum_{a \; \textrm{mod } q} S\left(\overline{a},  0; \frac{q}{n}\right)  e\left(\frac{-\overline{4a}Q^\star (m_1, m_2)}{q} \right) e\left(\frac{-a n_3^k}{q}\right) \\
       &= q \; \varepsilon_q^2  \sideset{}{^\star}\sum_{a \; \textrm{mod } q} \ \sideset{}{^\star}\sum_{\alpha \ \textrm{mod } q/n} e\left(\frac{\overline{a}\alpha}{q/n}\right) e\left(\frac{-\overline{4a}Q^\star (m_1, m_2)}{q} \right) e\left(\frac{-a n_3^k}{q}\right) \\
       &= q \; \varepsilon_q^2  \sideset{}{^\star}\sum_{a \; \textrm{mod } q} \sum_{d \mid (\overline{a}, q/n)} d \mu\left(\frac{q}{nd}\right) e\left(\frac{-\overline{4a}Q^\star (m_1, m_2)}{q} \right) e\left(\frac{-a n_3^k}{q}\right) \\
       &= q \; \varepsilon_q^2 \mu\left(\frac{q}{n}\right) S(-n_3^k,-\overline{4}Q^*(m_1,m_2);q).
    \end{align*}
Using Weil's bound for Kloosterman sums, we will get our desired bound.
\end{proof}

We can write the expression for $\mathcal{S}_{k,M}(X)$ given in equation \eqref{Skm} as the sum of $\mathcal{S}^0_{k,M}(X)$ and $\mathcal{S}^{\neq0}_{k,M}(X)$, where $\mathcal{S}^0_{k,M}(X)$ is the part of the sum with $m_1=m_2=0$ (which will contribute to the main term) and $\mathcal{S}^{\neq0}_{k,M}(X)$ is the remaining part of $\mathcal{S}_{k,M}(X)$.
Further, we denote the right-hand side of the equation \eqref{C1 sum main term} by $\mathscr{S}_j$ for $j=0,1,2$.
Recall from equations \eqref{P1} and \eqref{P2}, we have
\begin{equation} \label{Pj bound}
    P_j(n,q) \ll (\log(n+2)(q+2))^j, \ \ \ \ j=1,2.
\end{equation}
Consider
\begin{multline*}
    \mathcal{S}^{\neq0}_{k,M}(X) = \frac{X}{2\mathcal{Q}} \sum_{1\leqslant q \leqslant \mathcal{Q}} \frac{1}{q^5} \sum_{n_3 \leqslant Y} \mathsf{a}(n_3) \int_{\mathbb{R}} \psi(q,u) U(u)  \\ \times \mathop{\sum\sum}_{m_1,m_2 \in \mathbb{Z}\setminus\{0\}} \mathfrak{C}_1(m_1,m_2,n_3,a;q)\mathfrak{J}(m_1,m_2,u,q) e\left(\frac{-un_3^k}{q\mathcal{Q}}\right) du.
\end{multline*}
The above integral is negligibly small unless $m_1, m_2\ll X^\varepsilon$. To obtain a bound for the sum above, we need to bound the integrals. This can be achieved similarly to the approach outlined earlier in subsection \ref{subsection integral simplify}. Initially, using the $u$-integral, we obtain a condition over $x, v_1$ and $v_2$. Subsequently, by applying the first derivative bound, as given in Lemma \ref{exponential sum}, on the $v_1$ or $v_2$ integral, we find that the above integral is bounded by $\ll q^2/X$ or $q^2/\mathcal{Q}^2$. Now, using this bound, Lemma \ref{bound for C2 main} and equation \eqref{Pj bound}, we obtain
\begin{align} \label{Skm non zero m1m2}
 \mathcal{S}^{\neq0}_{k,M}(X)   &\ll \frac{X^2Y}{\mathcal{Q}} \sum_{1\leqslant q \leqslant \mathcal{Q}} \frac{1}{q^5} \mathop{\sum\sum}_{m_1,m_2 \ll X^\varepsilon } \sum_{n \mid q} n d(n) q^{3/2} \frac{q^2}{X} \notag \\
    &\ll \frac{X^{1+\varepsilon}Y}{\mathcal{Q}} \sum_{1\leqslant q \leqslant \mathcal{Q}} \frac{1}{q^{1/2}} \ll \frac{X^{1+\varepsilon} \mathcal{Q}^{1/2}Y}{\mathcal{Q}} = X^{3/4+\varepsilon} Y.
\end{align}
Thus, we have
\begin{equation} \label{Skm split}
    \mathcal{S}_{k,M}(X) = \mathcal{S}^{0}_{k,M}(X) + O(X^{3/4+\varepsilon} Y).
\end{equation}
The remaining section focuses on simplifying $\mathcal{S}^{0}_{k,M}(X)$. We will be using the bound $q^2/X$, which can be obtained by the condition coming from $u$ integral and another one coming from $x$ integral. For $j=0,1,2$, replacing $\Tilde{v_u}^{(j)}(1)$ by
\begin{equation} \label{phi bj}
    \phi^{b,j}(u) = \int_{X/2}^{3X} e\left(\frac{ux}{q\mathcal{Q}}\right)(\log{x})^j dx,
\end{equation}
we need to estimate the remainder terms from 
\begin{equation*}
    \phi^{\sharp,j}(u) = \Tilde{v_u}^{(j)}(1) - \phi^{b,j}(u).
\end{equation*}
Write correspondingly
\begin{equation*}
    \mathscr{S^{\sharp}}_{j} = \mathscr{S}_{j} - \mathscr{S}_{j}^b, \ \ \ j=0,1,2.
\end{equation*}
Notice that
\begin{equation*}
     \phi^{\sharp,j}(u) = \int_{X/2}^{3 X} \Big( V\left(\frac{x}{X}\right) -1\Big) e\left(\frac{ux}{q \mathcal{Q}}\right)(\log{x})^j dx \ll X (\log{X})^j,
\end{equation*}
and by using integration by parts we get the restriction $|u| \ll X^{\varepsilon}q/\mathcal{Q}.$
Consider
\begin{multline*}
   \mathfrak{A}_0^\sharp := \frac{X}{2\mathcal{Q}} \sum_{1 \leqslant q \leqslant \mathcal{Q}} \frac{1}{q^5}  \sum_{n_3 \leqslant Y} \mathsf{a}(n_3) \\ \times \int_{\mathbb{R}} \psi(q, u)U(u)  \mathscr{S}_{0}^{\sharp}(0,0,n_3,a,q)  \mathfrak{J}(0,0,u,q) e\left(\frac{-u n_3^k}{q\mathcal{Q}}\right)du
\end{multline*}
\begin{multline*}
   \hspace{0.45cm} = \frac{X}{2\mathcal{Q}} \sum_{1 \leqslant q \leqslant \mathcal{Q}} \frac{1}{q^5}  \sum_{n_3 \leqslant Y} \mathsf{a}(n_3)  \sum_{n\mid q}n d(n) P_2(n,q) \mathfrak{C}_2(0,0,n,0,n_3;q)\\ \times \int_{\mathbb{R}} \phi^{\sharp,0}(u) \psi(q, u)U(u) \mathfrak{J}(0,0,u,q)e\left(\frac{-u n_3^k}{q\mathcal{Q}}\right) du.
\end{multline*}
Using Lemma \ref{bound for C2 main}, bound for $\phi^{\sharp,0}$, and $\mathfrak{J}$, we obtain
\begin{align} \label{A0hash}
    \mathfrak{A}_0^\sharp &\ll X^{3/2} \sum_{1 \leqslant q \leqslant \mathcal{Q}} \frac{1}{q^5} \sum_{n_3 \leqslant Y} \mathsf{a}(n_3) \sum_{n\mid q} n d(n) |P_2(n,q)| q^{3/2} \frac{q^2}{X}  \notag \\ &\ll X^{1/2+\varepsilon} Y \sum_{1 \leqslant q \leqslant \mathcal{Q}} \frac{1}{q^{1/2}} \ll X^{3/4+\varepsilon} Y.
\end{align}
Similarly, we can have
\begin{equation} \label{A1hash A2hash}
    \mathfrak{A}_1^\sharp \ll X^{3/4+\varepsilon} Y \ \textrm{and} \ \mathfrak{A}_2^\sharp \ll X^{3/4+\varepsilon} Y.
\end{equation}
Next, 
\begin{multline*}
   \mathfrak{A}_0^b:= \frac{X}{2\mathcal{Q}} \sum_{1 \leqslant q \leqslant \mathcal{Q}} \frac{1}{q^5}  \sum_{n_3 \leqslant Y}  \mathsf{a}(n_3) \\ \times \int_{\mathbb{R}} \psi(q, u)U(u) \mathscr{S}_{0}^{b}(0,0,n_3,a,q) \mathfrak{J}(0,0,u,q)e\left(\frac{-u n_3^k}{q\mathcal{Q}}\right) du
\end{multline*}
\begin{multline*}
   \hspace{0.45cm} = \frac{X}{2\mathcal{Q}} \sum_{1 \leqslant q \leqslant \mathcal{Q}} \frac{1}{q^5}  \sum_{n_3 \leqslant Y} \mathsf{a}(n_3) \sum_{n\mid q}n d(n) P_2(n,q) \mathfrak{C}_2(0,0,n,0,n_3;q)\\ \times \int_{\mathbb{R}} \phi^{b,0}(u) \psi(q, u)U(u) \mathfrak{J}(0,0,u,q) e\left(\frac{-u n_3^k}{q\mathcal{Q}}\right)du.
\end{multline*} 
Now, as $n_3^k \leqslant X$, we can write $n_3^k = X - \ell$ where $0 \leqslant \ell \leqslant X-1$,
\begin{equation*}
    e\left(\frac{-u n_3^k}{q\mathcal{Q}}\right) = e\left(\frac{-u X +u \ell}{q\mathcal{Q}}\right)= e\left(\frac{-u X^{1/2}}{q}\right) + O(1),
\end{equation*}
with this, we arrive at
\begin{multline*} 
  \mathfrak{A}_0^b  = \frac{X}{2\mathcal{Q}} \sum_{1 \leqslant q \leqslant \mathcal{Q}} \frac{1}{q^5} \sum_{n\mid q}n d(n) P_2(n,q) \ \sideset{}{^\star}\sum_{a \; \textrm{mod } q} S\left(\overline{a}, 0; \frac{q}{n}\right) \mathfrak{C}(0,0,a;q) \\ \times \sum_{n_3 \leqslant Y} \mathsf{a}(n_3) e\left(\frac{-a n_3^k}{q}\right) \int_{\mathbb{R}} \phi^{b,0}(u) \psi(q,u) U(u)  \mathfrak{J}(0,0,u,q) e\left(\frac{-u X^{1/2}}{q}\right) du\\ +O\Big(X^{3/4+\varepsilon} Y\Big)
\end{multline*}
\begin{multline} \label{equation with a(n)}
= \frac{X}{2\mathcal{Q}} \sum_{1 \leqslant q \leqslant \mathcal{Q}} \frac{1}{q^5} \sum_{n\mid q}n d(n) P_2(n,q) \ \sideset{}{^\star}\sum_{a \; \textrm{mod } q} S\left(\overline{a}, 0; \frac{q}{n}\right) \mathfrak{C}(0,0,a;q) \\ \times \sum_{\alpha \ \textrm{mod } q}  \sum_{\substack{n_3 \leqslant Y \\ n_3 \equiv \alpha \ \textrm{mod } q}} \mathsf{a}(n_3)  e\left(\frac{-a \alpha^k}{q}\right) \\ \times \int_{\mathbb{R}} \phi^{b,0}(u) \psi(q,u) U(u)  \mathfrak{J}(0,0,u,q) e\left(\frac{-u X^{1/2}}{q}\right) du + O\Big(X^{3/4+\varepsilon} Y\Big).
\end{multline}

\noindent $\bullet$ \textit{When $\mathsf{a}(n)= \Lambda(n)$}. We split the sum over $q$ into two parts over the ranges $[1,Y^{1/2}]$ and $(Y^{1/2}, \mathcal{Q}]$. For the first range, we use the Bombieri-Vinogradov theorem and other can be treated trivially. Label them as $\mathcal{B}_i$ for $i=1,2$. Consider
\begin{multline*}
    \mathcal{B}_2 = \frac{X}{2\mathcal{Q}} \sum_{Y^{1/2} < q \leqslant \mathcal{Q}} \frac{1}{q^5} \sum_{n\mid q}n d(n) P_2(n,q) \ \sideset{}{^\star}\sum_{a \; \textrm{mod } q} S\left(\overline{a}, 0; \frac{q}{n}\right) \mathfrak{C}(0,0,a;q) \\ \times \sum_{n_3 \leqslant Y} \Lambda(n_3) e\left(\frac{-a n_3^k}{q}\right)  \int_{\mathbb{R}} \phi^{b,0}(u) \psi(q,u) U(u)  \mathfrak{J}(0,0,u,q) e\left(\frac{-u X^{1/2}}{q}\right)du
\end{multline*}
\begin{equation} \label{B2}
     \ll \frac{X}{\mathcal{Q}} \sum_{Y^{1/2} < q \leqslant \mathcal{Q}} \frac{1}{q^5} \sum_{n\mid q}n d(n) |P_2(n,q)| \sum_{n_3 \leqslant Y} \Lambda(n_3)\cdot q^{3/2} \cdot q^2 \ll X^{3/4 +\varepsilon}Y.
\end{equation}
and
\begin{multline} \label{B1}
\mathcal{B}_1 = \frac{X}{2\mathcal{Q}} \sum_{1 \leqslant q \leqslant Y^{1/2}} \frac{1}{q^5} \sum_{n\mid q}n d(n) P_2(n,q) \ \sideset{}{^\star}\sum_{a \; \textrm{mod } q} S\left(\overline{a}, 0; \frac{q}{n}\right) \mathfrak{C}(0,0,a;q) \\ \times \sum_{\alpha \ \textrm{mod } q} \Bigg( \sum_{\substack{n_3 \leqslant Y \\ n_3 \equiv \alpha \ \textrm{mod } q}} \Lambda(n_3) - \frac{Y}{\phi(q)}  +  \frac{Y}{\phi(q)} \Bigg)e\left(\frac{-a \alpha^k}{q}\right) \\ \times \int_{\mathbb{R}} \phi^{b,0}(u) \psi(q,u) U(u) \mathfrak{J}(0,0,u,q) e\left(\frac{-u X^{1/2}}{q}\right) du.
\end{multline}
Now, we write $\mathcal{B}_1 = \mathcal{B}_{11} + \mathcal{B}_{12}$, where 
\begin{multline*}
    \mathcal{B}_{11} = \frac{X}{2\mathcal{Q}} \sum_{1 \leqslant q \leqslant Y^{1/2}} \frac{1}{q^5} \sum_{n\mid q}n d(n) P_2(n,q) \ \sideset{}{^\star}\sum_{a \; \textrm{mod } q} S\left(\overline{a}, 0; \frac{q}{n}\right) \mathfrak{C}(0,0,a;q) \\ \times \sum_{\alpha \ \textrm{mod } q} \Bigg( \sum_{\substack{n_3 \leqslant Y \\ n_3 \equiv \alpha \ \textrm{mod } q}} \Lambda(n_3) - \frac{Y}{\phi(q)} \Bigg)e\left(\frac{-a \alpha^k}{q}\right) \\ \times \int_{\mathbb{R}} \phi^{b,0}(u) \psi(q,u) U(u) \mathfrak{J}(0,0,u,q)e\left(\frac{-u X^{1/2}}{q}\right) du.
\end{multline*}
Next, we split the sum over $q$ into dyadic intervals of size $q \sim C \ll Y^{1/2}$.
Using the Bombieri-Vinogradov theorem for each such interval, we can bound the above expression by
\begin{eqnarray*}
    \mathcal{B}_{11} &\ll& \sup_{C \ll Y^{1/2}} \frac{X}{\mathcal{Q}} \sum_{q \sim C} \frac{1}{q^5}\sum_{n \mid q} nd(n) |P_2(n,q)| \ \sideset{}{^\star}\sum_{a \; \textrm{mod } q} S\left(\overline{a}, 0; \frac{q}{n}\right) \mathfrak{C}(0,0,a;q) \\
    && \hspace{3.8cm}\times \sum_{\alpha \textrm{ mod } q} \Bigg( \sum_{\substack{n_3 \leqslant Y \\ n_3 \equiv \alpha \ \textrm{mod } q}} \Lambda(n_3) - \frac{Y}{\phi(q)} \Bigg)e\left(\frac{-a \alpha^k}{q}\right) q^2 \\
    &\ll& \sup_{C\ll Y^{1/2}} \frac{X^{1+\varepsilon}Y^{1/2}C^{3/2}\log{C}} {\mathcal{Q}} \ll X^{1/2+\varepsilon}Y^{5/4}.
\end{eqnarray*}
The remaining sum $\mathcal{B}_{12}$ will contribute to the main term. Recall the properties of the test function $\psi(q,u)$ (refer to Lemma \ref{delta}); for small $q \ll \mathcal{Q}^{1-\varepsilon}$, it can be replaced by $1$ with only a negligible error term.
\begin{multline*}
    \mathcal{B}_{12} = \frac{X^{1/2} Y}{2} \sum_{q=1}^\infty \frac{1}{q^5\phi(q)} \sum_{n\mid q}n d(n) P_2(n,q) \sideset{}{^\star}\sum_{a \ \textrm{mod } q} S\left(\overline{a}, 0; \frac{q}{n}\right) \mathfrak{C}(0,0,a;q) \\ \times \sum_{\alpha \ \textrm{mod } q} e\left(\frac{-a \alpha^k}{q}\right) \int_{\mathbb{R}} \phi^{b,0}(u)  U(u) \mathfrak{J}(0,0,u,q) e\left(\frac{-u X^{1/2}}{q}\right) du \\+ O\Big(X^{1/2+\varepsilon}Y^{3/4}\Big),
\end{multline*}
as
\begin{multline*}
    \frac{X^{1/2} Y}{2} \sum_{q > Y^{1/2}}\frac{1}{q^5\phi(q)} \sum_{n\mid q}n d(n) P_2(n,q) \sideset{}{^\star}\sum_{a \ \textrm{mod } q} S\left(\overline{a}, 0; \frac{q}{n}\right) \mathfrak{C}(0,0,a;q) \\ \times \sum_{\alpha \ \textrm{mod } q} e\left(\frac{-a \alpha^k}{q}\right) \int_{\mathbb{R}} \phi^{b,0}(u) \psi(q,u) U(u) \mathfrak{J}(0,0,u,q) e\left(\frac{-u X^{1/2}}{q}\right)du
\end{multline*}
\begin{align*}
    &\ll X^{3/2} Y \sum_{q > Y^{1/2}}\frac{1}{q^5\phi(q)} \sum_{n\mid q}n d(n) |P_2(n,q)| q^{3/2}  \frac{q^2}{X} \\ 
    &\ll X^{1/2+\varepsilon}Y \sum_{q > Y^{1/2}}\frac{1}{q^{1/2}\phi(q)} \ll X^{1/2+\varepsilon}Y^{3/4}.
\end{align*}
Finally, from equations \eqref{equation with a(n)}, \eqref{B2} and \eqref{B1}, we obtain
\begin{multline} \label{A0b}
    \mathfrak{A}_0^b = \frac{X^{1/2} Y}{2} \sum_{q=1}^\infty \frac{1}{q^5\phi(q)} \sum_{n\mid q}n d(n) P_2(n,q) \sideset{}{^\star}\sum_{a \ \textrm{mod } q} S\left(\overline{a}, 0; \frac{q}{n}\right) \mathfrak{C}(0,0,a;q) \\ \times \sum_{\alpha \ \textrm{mod } q} e\left(\frac{-a \alpha^k}{q}\right) \int_{\mathbb{R}} \phi^{b,0}(u)  U(u) \mathfrak{J}(0,0,u,q) e\left(\frac{-u X^{1/2}}{q}\right) du \\ + O\Big( X^{3/4 + \varepsilon} Y\Big).
\end{multline}
Similarly, we can get
\begin{multline} \label{A1b}
    \mathfrak{A}_1^b = \frac{X^{1/2} Y}{2} \sum_{q=1}^\infty \frac{1}{q^5\phi(q)} \sum_{n\mid q}n d(n) P_1(n,q) \sideset{}{^\star}\sum_{a \ \textrm{mod } q} S\left(\overline{a}, 0; \frac{q}{n}\right) \mathfrak{C}(0,0,a;q) \\ \times \sum_{\alpha \ \textrm{mod } q} e\left(\frac{-a \alpha^k}{q}\right) \int_{\mathbb{R}} \phi^{b,1}(u)  U(u) \mathfrak{J}(0,0,u,q) e\left(\frac{-u X^{1/2}}{q}\right)du \\ +  O\Big( X^{3/4 + \varepsilon} Y\Big),
\end{multline}
and
\begin{multline} \label{A2b}
    \mathfrak{A}_2^b = \frac{X^{1/2} Y}{4} \sum_{q=1}^\infty \frac{1}{q^5\phi(q)} \sum_{n\mid q}n d(n) \sideset{}{^\star}\sum_{a \ \textrm{mod } q} S\left(\overline{a}, 0; \frac{q}{n}\right) \mathfrak{C}(0,0,a;q) \\ \times \sum_{\alpha \ \textrm{mod } q} e\left(\frac{-a \alpha^k}{q}\right) \int_{\mathbb{R}} \phi^{b,2}(u)  U(u) \mathfrak{J}(0,0,u,q) e\left(\frac{-u X^{1/2}}{q}\right)du \\ +  O\Big( X^{3/4 + \varepsilon} Y\Big).
\end{multline}
To simplify the main term further, we need to analyze the integrals. Therefore, let us consider 
\begin{equation*}
   \mathcal{G}_j:= \int_{\mathbb{R}} \phi^{b,j}(u) U(u) \mathfrak{J}(0,0,u,q)e\left(\frac{-u X^{1/2}}{q}\right) du.  
\end{equation*}
Substituting the expression for $\phi^{b,j}$ from equation \eqref{phi bj}, the integral $\mathfrak{J}$ from equation \eqref{integral poisson}, and applying the change of variable $x \rightarrow xX$, we obtain
\begin{multline*}
  \mathcal{G}_j  = X \int_{\mathbb{R}} \Bigg( \int_{1/2}^{3} e\left(\frac{uxX}{q\mathcal{Q}}\right)(\log{xX})^j dx \Bigg) U(u) e\left(\frac{-u X^{1/2}}{q}\right) \\ \times \iint_{\mathbb{R}^2} W_1(v_1) W_2(v_2) e\left(\frac{-uQ(v_1X^{1/2}, v_2X^{1/2})}{q\mathcal{Q}} \right) dv_1 dv_2 du
\end{multline*}
\begin{multline*}
   \hspace{0.3cm} = q X \int_{\mathbb{R}} \Bigg( \int_{1/2}^{3} e(uxX^{1/2})(\log{xX})^j dx \Bigg)  U(uq) e(-u X^{1/2}) \\ \times \iint_{\mathbb{R}^2} W_1(v_1) W_2(v_2) e\left(\frac{-uQ(v_1X^{1/2}, v_2X^{1/2})}{\mathcal{Q}} \right) dv_1 dv_2 du
\end{multline*}
\begin{multline*}
   \hspace{0.3cm} = q X^{1/2} \int_{\mathbb{R}} \Bigg( \int_{1/2}^{3} e(ux)(\log{xX})^j dx \Bigg) U\left(\frac{uq}{X^{1/2}}\right) e(-u) \\ \times \iint_{\mathbb{R}^2} W_1(v_1) W_2(v_2) e\left(\frac{-uQ(v_1X^{1/2}, v_2X^{1/2})}{X} \right) dv_1 dv_2 du.
\end{multline*}
Using
\begin{align*}
    e\left(\frac{-uQ(v_1X^{1/2}, v_2X^{1/2})}{X} \right) &= e\left(-uQ_h(v_1,v_2) \right) e\left(-u\left(\frac{Dv_1}{X^{1/2}} + \frac{Ev_2}{X^{1/2}} + \frac{F}{X}\right)\right) \\
    &= e\left(-uQ_h(v_1,v_2) \right) + O\Bigg(\frac{1}{X^{1/2}}\Bigg),
\end{align*}
where $Q_h$ represents the homogeneous part of the polynomial $Q$. Furthermore, the bump function $U$ can be substituted with $1$ up to some negligible error. Thus, the above equation, with some negligible error, becomes:
\begin{multline*}
     q X^{1/2}\int_{\mathbb{R}}  \Bigg( \int_{1/2}^{3} e(ux)(\log{xX})^j dx \Bigg) \\ \times \iint_{\mathbb{R}^2} W_1(v_1) W_2(v_2) e\left(-u(Q_h(v_1, v_2)+1) \right) dv_1 dv_2 du.
\end{multline*}
Thus, for $j=0,1,2$, we have
\begin{equation} \label{Gj}
   \mathcal{G}_j = q X^{1/2} \sum_{\ell=0}^j \binom{j}{\ell} (\log{X})^{j-\ell} \mathcal{J}_\ell,
\end{equation}
where
\begin{multline} \label{J}
   \mathcal{J}_\ell:=  \int_{\mathbb{R}}  \Bigg( \int_{1/2}^{3} e(ux)(\log{x})^\ell dx \Bigg) \\ \times \iint_{\mathbb{R}^2} W_1(v_1) W_2(v_2) e\left(-u(Q_h(v_1, v_2)+1) \right) dv_1 dv_2 du.
\end{multline}
Substituting this in the equations \eqref{A0b}-\eqref{A2b}, and using equations \eqref{A0hash} and \eqref{A1hash A2hash}, we obtain
\begin{equation*}
     \mathfrak{A}_0 = \frac{X Y}{2} \mathcal{J}_0 \mathcal{S}_2 + O\Big( X^{3/4 + \varepsilon} Y\Big),
\end{equation*}
where
\begin{equation} \label{S2}
    \mathcal{S}_2 = \sum_{q=1}^\infty \frac{1}{q^4\phi(q)} \sum_{n\mid q}n d(n) P_2(n,q) \sideset{}{^\star}\sum_{a \ \textrm{mod } q} S\left(\overline{a}, 0; \frac{q}{n}\right) \mathfrak{C}(0,0,a;q) \sum_{\alpha \ \textrm{mod } q} e\left(\frac{-a \alpha^k}{q}\right).
\end{equation}
Next,
\begin{equation*}
    \mathfrak{A}_1 = \frac{XY }{2} (\log{X}) \mathcal{J}_0 \mathcal{S}_1 + \frac{XY}{2} \mathcal{J}_1 \mathcal{S}_1 +O\Big(X^{3/4+\varepsilon} Y\Big),
\end{equation*}
where
\begin{equation} \label{S1}
    \mathcal{S}_1 = \sum_{q=1}^\infty \frac{1}{q^4\phi(q)} \sum_{n\mid q}n d(n) P_1(n,q) \sideset{}{^\star}\sum_{a \ \textrm{mod } q} S\left(\overline{a}, 0; \frac{q}{n}\right) \mathfrak{C}(0,0,a;q) \sum_{\alpha \ \textrm{mod } q} e\left(\frac{-a \alpha^k}{q}\right),
\end{equation}
and
\begin{equation*}
    \mathfrak{A}_2 = \frac{XY}{4} (\log{X})^2 \mathcal{J}_0 \mathcal{S}_0 + \frac{XY}{2} (\log{X}) \mathcal{J}_1 \mathcal{S}_0 + \frac{XY}{4} \mathcal{J}_2 \mathcal{S}_0 +O\Big(X^{3/4+\varepsilon} Y\Big),
\end{equation*}
with
\begin{equation} \label{S0}
    \mathcal{S}_0 = \sum_{q=1}^\infty \frac{1}{q^4\phi(q)} \sum_{n\mid q}n d(n) \sideset{}{^\star}\sum_{a \ \textrm{mod } q} S\left(\overline{a}, 0; \frac{q}{n}\right) \mathfrak{C}(0,0,a;q) \sum_{\alpha \ \textrm{mod } q} e\left(\frac{-a \alpha^k}{q}\right).
\end{equation}
Summarizing the above analysis and using equation \eqref{Skm split}. We have the following lemma.
\begin{lemma} \label{main term von}
    The main term in Theorem \ref{thm1}, for the case of $\Lambda(n)$, denoted by $\mathcal{S}_{k,M}(X)$ is given as:
    \begin{multline*} 
   \mathcal{S}_{k,M}(X)=  \frac{XY}{4} (\log{X})^2 \mathcal{J}_0 \mathcal{S}_0 + \frac{XY}{2} (\log{X}) \left( \mathcal{J}_0\mathcal{S}_1 + \mathcal{J}_1\mathcal{S}_0\right) \\+ \frac{XY}{2}\left( \mathcal{J}_0 \mathcal{S}_2+ \mathcal{J}_1\mathcal{S}_1 + \frac{1}{2}\mathcal{J}_2\mathcal{S}_0\right) +O\Big(X^{3/4+\varepsilon} Y\Big),
\end{multline*}
where for $j=0,1,2$, $\mathcal{S}_j$ and $\mathcal{J}_j$ are defined in equations \eqref{S2}-\eqref{S0} and \eqref{J}, respectively.
\end{lemma}
\noindent $\bullet$ \textit{When $\mathsf{a}(n)= 1$}. We split the sum over $q$ in equation \eqref{equation with a(n)} into two parts over the ranges $[1,Y)$ and $[Y,\mathcal{Q}]$. The first range will contribute to the main term, other will go in error. Label them as $\mathcal{B}_i$ for $i=3,4$. Consider
\begin{multline*}
  \mathcal{B}_3 = \frac{XY}{2\mathcal{Q}} \sum_{1 \leqslant q < Y} \frac{1}{q^6} \sum_{n\mid q}n d(n) P_2(n,q) \sum_{\alpha \ \textrm{mod } q} \ \sideset{}{^\star}\sum_{a \; \textrm{mod } q} S\left(\overline{a}, 0; \frac{q}{n}\right) \mathfrak{C}(0,0,a;q) e\left(\frac{-a \alpha^k}{q}\right) \\ \times \int_{\mathbb{R}} \phi^{b,0}(u) \psi(q,u) U(u) \mathfrak{J}(0,0,u,q) e\left(\frac{-u X^{1/2}}{q}\right)du + O\left(X^{1/2+\varepsilon}Y^{3/2}\right),
\end{multline*} as
\begin{equation*}
    \sum_{\substack{n_3\leqslant Y \\ n_3 \equiv \alpha \ \textrm{mod } q}} 1 = \frac{Y}{q} + O(1).
\end{equation*}
Next,
\begin{multline*}
    \mathcal{B}_4 = \frac{X}{2\mathcal{Q}} \sum_{Y \leqslant q \leqslant \mathcal{Q}} \frac{1}{q^5} \sum_{n\mid q}n d(n) P_2(n,q) \sum_{n_3 \leqslant Y} \ \sideset{}{^\star}\sum_{a \; \textrm{mod} \; q} S\left(\overline{a}, 0; \frac{q}{n}\right) \mathfrak{C}(0,0,a;q) e\left(\frac{-a n_3^k}{q}\right) \\ \times \int_{\mathbb{R}} \phi^{b,0}(u) \psi(q,u) U(u)  \mathfrak{J}(0,0,u,q) e\left(\frac{-u X^{1/2}}{q}\right) du
\end{multline*}
\begin{equation*}
    \ll \frac{XY}{\mathcal{Q}} \sum_{Y \leqslant q \leqslant \mathcal{Q}} \frac{1}{q^5} \sum_{n\mid q}n d(n) |P_2(n,q)| q^{3/2} q^2 \ll X^{3/4+\varepsilon}Y.
\end{equation*}
Also,
\begin{multline*}
    \frac{X}{2\mathcal{Q}} \sum_{ q > \mathcal{Q}} \frac{1}{q^5} \sum_{n\mid q}n d(n) P_2(n,q) \sum_{n_3 \leqslant Y} \ \sideset{}{^\star}\sum_{a \; \textrm{mod } q} S\left(\overline{a}, 0; \frac{q}{n}\right) \mathfrak{C}(0,0,a;q) e\left(\frac{-a n_3^k}{q}\right) \\ \times \int_{\mathbb{R}} \phi^{b,0}(u) \psi(q,u) U(u) \mathfrak{J}(0,0,u,q) e\left(\frac{-u X^{1/2}}{q}\right)du
\end{multline*}
\begin{equation*}
    \ll X^{3/2}Y \sum_{ q > \mathcal{Q}} \frac{1}{q^5} \sum_{n\mid q}n d(n) |P_2(n,q)|\cdot q^{3/2} \ll X^{3/4+\varepsilon}Y. 
\end{equation*}
Here, we have treated the integral trivially. Hence, we obtain
\begin{multline} \label{A0b1}
    \mathfrak{A}_0^b = \frac{X^{1/2}Y}{2} \sum_{q =1}^\infty \frac{1}{q^6} \sum_{n\mid q}n d(n) P_2(n,q) \sum_{\alpha \ \textrm{mod } q} \ \sideset{}{^\star}\sum_{a \; \textrm{mod } q}  S\left(\overline{a}, 0; \frac{q}{n}\right) \mathfrak{C}(0,0,a;q) e\left(\frac{-a \alpha^k}{q}\right) \\ \times \int_{\mathbb{R}} \phi^{b,0}(u) U(u) \mathfrak{J}(0,0,u,q) e\left(\frac{-u X^{1/2}}{q}\right)du +O(X^{3/4+\varepsilon}Y).
\end{multline}
Analogously, we can have
\begin{multline} \label{A1b1}
    \mathfrak{A}_1^b = \frac{X^{1/2} Y}{2} \sum_{q=1}^\infty \frac{1}{q^6} \sum_{n\mid q}n d(n) P_1(n,q) \sum_{\alpha \ \textrm{mod } q} \ \sideset{}{^\star}\sum_{a \ \textrm{mod } q} S\left(\overline{a}, 0; \frac{q}{n}\right) \mathfrak{C}(0,0,a;q)  e\left(\frac{-a \alpha^k}{q}\right) \\ \times \int_{\mathbb{R}} \phi^{b,1}(u) U(u)  \mathfrak{J}(0,0,u,q) e\left(\frac{-u X^{1/2}}{q}\right) du +O(X^{3/4+\varepsilon}Y).
\end{multline}
and
\begin{multline} \label{A2b1}
    \mathfrak{A}_2^b = \frac{X^{1/2} Y}{2} \sum_{q=1}^\infty \frac{1}{q^6} \sum_{n\mid q}n d(n) \sum_{\alpha \ \textrm{mod } q} \ \sideset{}{^\star}\sum_{a \ \textrm{mod } q} S\left(\overline{a}, 0; \frac{q}{n}\right) \mathfrak{C}(0,0,a;q) e\left(\frac{-a \alpha^k}{q}\right) \\ \times \int_{\mathbb{R}} \phi^{b,2}(u) U(u)  \mathfrak{J}(0,0,u,q) e\left(\frac{-u X^{1/2}}{q}\right) du + O\Big( X^{3/4 + \varepsilon} Y\Big).
\end{multline}
Using equation \eqref{Gj}, we can simplify the integral expressions of the above equations. After that from equations \eqref{A0hash}, and \eqref{A1hash A2hash}, we obtain
\begin{equation*}
     \mathfrak{A}_0 = \frac{X Y}{2} \mathcal{J}_0 \mathcal{C}_2 + O\Big( X^{3/4 + \varepsilon} Y\Big),
\end{equation*}
where
\begin{equation} \label{C2main}
    \mathcal{C}_2 = \sum_{q=1}^\infty \frac{1}{q^6} \sum_{n\mid q}n d(n) P_2(n,q) \sum_{\alpha \ \textrm{mod } q} \ \sideset{}{^\star}\sum_{a \ \textrm{mod } q} S\left(\overline{a}, 0; \frac{q}{n}\right) \mathfrak{C}(0,0,a;q)  e\left(\frac{-a \alpha^k}{q}\right).
\end{equation}
Further,
\begin{equation*}
    \mathfrak{A}_1 = \frac{XY}{2} ( \log{X}) \mathcal{J}_0 \mathcal{C}_1 + \frac{XY}{2} \mathcal{J}_1 \mathcal{C}_1 +O\Big(X^{3/4+\varepsilon} Y\Big),
\end{equation*}
where
\begin{equation} \label{C1main}
    \mathcal{C}_1 = \sum_{q=1}^\infty \frac{1}{q^6} \sum_{n\mid q}n d(n) P_1(n,q) \sum_{\alpha \ \textrm{mod } q} \ \sideset{}{^\star}\sum_{a \ \textrm{mod } q} S\left(\overline{a}, 0; \frac{q}{n}\right) \mathfrak{C}(0,0,a;q)  e\left(\frac{-a \alpha^k}{q}\right),
\end{equation}
and
\begin{equation*}
    \mathfrak{A}_2 = \frac{XY}{4} (\log{X})^2 \mathcal{J}_0 \mathcal{C}_0 + \frac{XY}{2} (\log{X}) \mathcal{J}_1 \mathcal{C}_0 + \frac{XY}{4} \mathcal{J}_2 \mathcal{C}_0 +O\Big(X^{3/4+\varepsilon} Y\Big),
\end{equation*}
with
\begin{equation} \label{C0main}
    \mathcal{C}_0 = \sum_{q=1}^\infty \frac{1}{q^6} \sum_{n\mid q}n d(n) \sum_{\alpha \ \textrm{mod } q} \ \sideset{}{^\star}\sum_{a \ \textrm{mod } q} S\left(\overline{a}, 0; \frac{q}{n}\right) \mathfrak{C}(0,0,a;q)  e\left(\frac{-a \alpha^k}{q}\right).
\end{equation}
Using equation \eqref{Skm split}, the following lemma records the main term for the case of the identity function.
\begin{lemma} \label{main term id}
    The main term in Theorem \ref{thm1}, for the case of identity function, denoted by $\mathcal{S}_{k,M}(X)$ is given as:
    \begin{multline*}
       \mathcal{S}_{k,M}(X)= \frac{XY}{4} (\log{X})^2 \mathcal{J}_0 \mathcal{C}_0 + \frac{XY}{2} (\log{X}) \left( \mathcal{J}_0\mathcal{C}_1 + \mathcal{J}_1\mathcal{C}_0\right) \\+ \frac{XY}{2}\left( \mathcal{J}_0 \mathcal{C}_2+ \mathcal{J}_1\mathcal{C}_1 + \frac{1}{2}\mathcal{J}_2\mathcal{C}_0\right) +O\Big(X^{3/4+\varepsilon} Y\Big),
\end{multline*}
where for $j=0,1,2$, $\mathcal{C}_j$ and $\mathcal{J}_j$ are defined in equations \eqref{C2main}-\eqref{C0main} and \eqref{J}, respectively.
\end{lemma}
Consequently, equation \eqref{split in main + error} together with Lemma \ref{final SkE}, Lemma \ref{main term von} and Lemma \ref{main term id} proves Theorem \ref{thm1} and Theorem \ref{thm2}.

\medskip
\printbibliography
\end{document}